      \newtheorem{theorem}{Theorem}[section]
      \newtheorem{example}[theorem]{Example}
      \newtheorem{definition}[theorem]{Definition}
      \newtheorem{corollary}[theorem]{Corollary}
      \newtheorem{lemma}[theorem]{Lemma}
      \newcommand{\ct}[1]{\langle {#1}\rangle \lower.3ex\hbox{$_{t}$}}
      \newcommand{\lt}[1]{[ {#1}] \lower.3ex\hbox{$_{t}$}}
\begin{document}

\title[Holomorphic Campanato Spaces on the Unit Ball ]{Holomorphic Campanato Spaces on the Unit Ball }
\author{Jianfei Wang and Jie Xiao}
\address{Department of Mathematics and Physics, Information Engineering, Zhejiang Normal University, Zhejiang 321004, P. R. China}
\curraddr{Department of Mathematics and Statistics, Memorial
University, St. John's, NL A1C 5S7, Canada}
\email{wjfustc@zjnu.cn}
\address{Department of Mathematics \& Statistics, Memorial University, NL A1C 5S7, Canada}
         \email{jxiao@mun.ca}
\thanks{JW was in part supported the National Natural Science Foundation of China (No.11001246, No. 11101139) and China Scholarship Council.}
\thanks{JX was in part supported by NSERC of Canada and URP of Memorial University.}

\subjclass[2010]{30H25, 32A10, 32A36, 46G12, 47B38}

\date{}


\keywords{$\mathcal{HC}^s$, $\mathcal{Q}_p$, modified Carleson measures, gradients, associated Gleason problem, induced Carleson problem}

\begin{abstract}
As outlined below, this paper is devoted to a Carleson-type-measure-based study of the holomorphic Campanato $2$-space on the open unit ball $\mathbb B_n$ of $\mathbb C^n$, comprising all Hardy $2$-functions whose oscillations in non-isotropic metric balls on the compact unit sphere $\mathbb S_n$ are proportional to some power of the radius other than the dimension $n\ge 1$.     
\end{abstract}
\maketitle

\tableofcontents

\section*{Introduction}\label{s0}
\setcounter{equation}{0}

For $n\in\mathbb Z^+=\{1,2,3,...\}$ let 
$$
\mathbb B_n=\Big\{(z_1,...,z_n)\in\mathbb C^n:\ \sum_{j=1}^n|z_j|^2<1\Big\}\ \
\&\ \
\mathbb S_n=\Big\{(z_1,...,z_n)\in\mathbb C^n:\ \sum_{j=1}^n|z_j|^2=1\Big\}
$$
be the open unit ball and the compact unit sphere in the complex $n$-space $\mathbb C^n$ respectively. If $\sigma$ stands for the normalized, rotation invariant measure on $\mathbb S_n$, then $\mathcal{H}^2$ represents the Hardy $2$-space of all holomorphic functions on $\mathbb B_n$ such that
$$
\|f\|_{\mathcal{H}^2}=\sup_{r\in (0,1)}\left(\int_{\mathbb S_n}|f(r\zeta)|^2\,d\sigma(\zeta)\right)^\frac12<\infty.
$$
As is well-known, $f\in\mathcal{H}^2$ ensures that 
$$
\begin{cases}
f(\zeta)=\lim_{r\to 1}f(r\zeta)\ \ \hbox{exists\ a.e.\ for}\ \ \zeta\in\mathbb S_n\\
\&\\ 
\|f\|_{\mathcal{H}^2}=\left(\int_{\mathbb S_n}|f(\zeta)|^2\,d\sigma(\zeta)\right)^\frac12,
\end{cases}
$$
and thus $\mathcal{H}^2$ can be identified with a closed subspace of the Lebesgue $2$-space $L^2(\mathbb S_n,\sigma)$.

For $s\in (-1,\frac{n}{2}]$ let $\mathcal{HC}^{s}$ be the Campanato $2$-space of all holomorphic functions $f\in\mathcal H^2$ obeying
$$
\|f\|_{\mathcal{HC}^{s}}=\|f\|_{\mathcal{H}^2}+\sup_{Q(\zeta,r)}\left({r^{2s-n}\int_{Q(\zeta,r)}|f(\xi)-f_{Q(\zeta,r)}|^2\,d\sigma(\xi)}\right)^\frac12<\infty,
$$
where the supremum is taken over all non-isotropic metric balls on $\mathbb S_n$
$$
Q(\zeta,r)=\{\xi\in \mathbb S_n:\,|1-\langle \zeta, \xi\rangle|<r \}
$$ 
and
$$
f_{Q(\zeta,r)}=\int_{Q(\zeta,r)} f(\xi)\,\frac{d\sigma(\xi)}{\sigma(Q(\zeta,r))}.
$$
It is not hard to see that $\mathcal{HC}^{s}$ becomes a Banach space under the norm $\|\,.\,\|_{\mathcal{HC}^{s}}$ and enjoys the following structure table (see, e.g. \cite{Campanato1, Campanato2, FeffermanS, JohnN, Krantz} and \cite[p. 209-217]{KufnerJF} for the real counterparts which are often used in the theory of elliptic partial differential equations):

\bigskip
\begin{center}
    \begin{tabular}{ | l | p{7cm} |}
    \hline
    Index $s$ & Holomorphic Campanato Space $\mathcal{HC}^{s}$\\ \hline
     $s\in (-1,0)$ & Analytic Lipschitz Space $\mathcal A_{-s}$\\ \hline
     $s=0$ & Analytic John-Nirenberg Space $\mathcal{BMOA}$ \\ \hline
     $s\in (0,{n}/{2})$ & Holomorphic Morrey Space $\mathcal{HM}^{s}$\\ \hline
     $s={n}/{2}$ & Holomorphic Hardy Space $\mathcal H^2$ \\ \hline
\end{tabular}
\end{center}
\bigskip

When $n=1$, some fundamental function/operator-theoretic properties of $\mathcal{HC}^s$ have been discovered in \cite{LiLL, WangX, WuX, Xiao2, XiaoX, XiaoY}. However, when $n>1$, as far as we know, there is only one paper partially touching this holomorphic space - more precisely -  \cite{CascanteFO} has established the corona and multiplication theorems for $\mathcal{HC}^s$ under $s\in (0,n/2)$ extending the cases $s\in (-1,0)$ (cf. \cite{KrL}; $s=0$ (cf. \cite{AnC, OrtegaF}); $s=n/2$ (cf. \cite{Am, An}). In light of complex analysis and geometric measure theory, such a higher-dimensional situation is important, interesting, and worth further investigation. This has actually motivated us to carry out this study via the forthcoming five sections. \S\ref{s2} provides basic notations and descriptions of the modified Carleson measures. In \S\ref{s3} we employ the modified Carleson measures to characterize $\mathcal{HC}^s$ by means of the M\"obius transforms and four types of gradients. \S\ref{s4} interprets $\mathcal{HC}^s$ using the holomorphic Q-class and its atomic decomposition (via a fractional differentiation), and \S\ref{s5} discusses the Gleason problem for $\mathcal{HC}^s$ through the canonical example and growth. Finally, in \S\ref{s6}, we deal with the Carleson problem for $\mathcal{HC}^s$, namely, decide geometrically how $\mathcal{HC}^s$ embeds continuously into a more general non-isotropic tent space, and consequently describe when the Riemann-Stieltjes operator is continuous on $\mathcal{HC}^s$.
 
\section{Preliminaries}\label{s2}
\setcounter{equation}{0}

\subsection{Basic notations}\label{s21} Throughout this paper, we make the following conventions.

 \begin{itemize}
 
 \item For the finite complex plane $\mathbb C$ denote by $\mathbb C^n$ the Hilbert space of all points $z=(z_1,...,z_n)\in \mathbb C\times\cdots\times\mathbb C$ equipped with standard inner product 
 $$
 \langle z, w\rangle =\sum\limits_{i=1}^{n}z_i\bar{w_i}\quad\forall\quad
 z=(z_1,...,z_n)\ \ \&\ \ w=(w_1,...,w_n)\in \mathbb C^n
 $$ 
 and the associated norm $|z|={\langle z, z\rangle}^\frac{1}{2}$.
 
 \item Recall 
 $$
 \mathbb B_n=\{z\in\mathbb C^n:\,|z|<1\}\ \ \&\ \ \mathbb S_n=\{z\in\mathbb C^n:\,|z|=1\}.
 $$
Denote by $\nu$ and $\sigma$ the volume measure on $\mathbb B_n$ normalized so that $\nu(\mathbb B_n)=1$ and the surface-area measure on $\mathbb S_n$ normalized so that $\sigma(\mathbb S_n)=1$, respectively. 

\item
For each $a\in \mathbb B_n$, suppose that $\varphi_{a}(z)$ is the
M\"obius automorphism of $\mathbb B_n$ satisfying
 $$
 \varphi_{a}(0)=a,\,\varphi_{a}(a)=0\ \ \&\ \ \varphi_{a}(\varphi_{a}(z))=z.
 $$  
 If $\mathsf{Aut}(\mathbb B_n)$ stands for the group of all biholomorphic automorphisms of $\mathbb B_n$, then an application of \cite[Theorem 2.2.2]{Rudin} derives
\begin{align*}
 1-|\varphi_{a}(z)|^2=\frac{(1-|a|^2)(1-|z|^2)}{|1-\langle z, a\rangle|^2}.
 \end{align*} 

\item Let $\mathcal{H}(\mathbb B_n)$ be the set of all holomorphic functions on $\mathbb B_n.$ For $f\in\mathcal{H}(\mathbb B_n)$, set 
$$
\nabla{f}=(\frac{\partial f}{\partial z_1},...,\frac{\partial f}{\partial z_n})\ \ \&\ \ \mathsf{R}f(z)=\sum\limits_{k=1}^n z_k\frac{\partial f}{\partial z_k}(z)
$$
be the complex gradient and the radial derivative of $f$ respectively. Upon write 
$$
d\lambda(z)=\frac{d\nu(z)}{(1-|z|^2)^{n+1}}
$$ 
for the M\"obius invariant measure in $\mathbb B_n$, one has 
 $$\int_{\mathbb B_n}f(z)d\lambda(z)=\int_{\mathbb B_n}f\circ{\psi}(z)d\lambda(z)\quad\forall\quad
 f\in \mathcal {H}({\mathbb B_n})\ \ \&\ \ \psi\in\mathsf{Aut}(\mathbb B_n).
 $$
 
 \item For $f\in \mathcal{H}(\mathbb B_n)$, let $\tilde\nabla{f}(z)=\nabla(f\circ\varphi_{z}(0))$ be the invariant gradient of $f$. Then  for $f\in H(\mathbb B_n)$, using a direct computation we have by \cite[p. 49]{Zhu1})
 $$
 |\tilde\nabla{f}(z)|^{2}=(1-|z|^2)(|\nabla f|^2-|\mathsf{R} f(z)|^2)=\frac{1}{4}\tilde{\Delta}|f|^2(z).
 $$ 
 and
 $$
(1-|z|^2)|\mathsf{R} f(z)|\leq(1-|z|^2)|\nabla f|  \leq|\tilde\nabla{f}(z)|.
 $$
 
 \item For the Kronecker delta $\delta_{i,j}$, let 
  $$
  \tilde{\Delta}u(z)=4(1-|z|^2)\sum\limits_{i,j=1}^{n}(\delta_{i,j}-\bar{z_i}z_j)\frac{\partial^{2}u(z)}{\partial z_i\partial \bar{z}_j}
  $$
  be the invariant Laplace operator of $u$. Associated with the fundamental solution of this operator is the Green function below:
  $$
  G(z)=\frac{n+1}{2n}\int_{|z|}^{1}{(1-t^2)^{n-1}}{t^{1-2n}}\,dt.
  $$ 
 Clearly,
 $$
 G(z)\approx (1-|z|^2)^n\quad\hbox{as}\quad |z|\to 1^{-}.
 $$
 Upon putting
 $$
 \begin{cases}
 G(z,a)=G(\varphi_a(z))\quad \forall\quad z,a\in\mathbb B_n;\\
 P(z,\omega)=\frac{(1-|z|^2)^n}{|1-\langle z, \omega\rangle|^{2n}}\quad\forall\quad z,\,\omega\in\mathbb B_n,
\end{cases}
 $$
 one has that if $f\in\mathcal{H}^2$, then it follows from \cite[Lemma 2.10]{OrtegaF} or \cite[Theorem 4.23]{Zhu1} that
 \begin{align*}
 \int_{\mathbb S_n}|f(\zeta)-f(0)|^2d\sigma(\zeta)\approx\int_{\mathbb B_n}|\tilde{\nabla}f(z)|^2(1-|z|^2)^nd\lambda(z),
 \end{align*}
and consequently for $\varphi_{a}\in\mathsf{Aut}(\mathbb B_n)$,
 \begin{align*}
 \int_{\mathbb S_n}|f\circ\varphi_{a}(\zeta)-f(a)|^2d\sigma(\zeta)\approx\int_{\mathbb B_n}|\tilde{\nabla}f(z)|^2(1-|\varphi_{a}(z)|^2)^n\,\lambda(z).
 \end{align*}

\item  For a multi-index $m=(m_1,...,m_n)$ and $f\in\mathcal{H}(\mathbb B_n)$, set
 $$
 \begin{cases}
 |m|=m_1+\cdots m_n;\\
 z^m=z_1^{m_1}\cdot\cdot\cdot z_n^{m_n};\\
 \partial^{m}{f}=\frac{\partial ^{|m|}f}{\partial z_{1}^{m_1}\cdot\cdot\cdot\partial z_{n}^{m_n}}.
 \end{cases}
$$
\item In the above and below, ${\mathsf X}\lesssim{\mathsf Y}$ or
${\mathsf X}\gtrsim{\mathsf Y}$ represents ${\mathsf
X}\le \kappa{\mathsf Y}$ or ${\mathsf X}\ge \kappa{\mathsf Y}$ for some constant $\kappa>0$. Similarly, the notation ${\mathsf X}\approx{\mathsf Y}$ means both ${\mathsf X}\lesssim{\mathsf Y}$ and
${\mathsf X}\gtrsim{\mathsf Y}$ hold.
\end{itemize}

\subsection{Modified Carleson measures}\label{s22}

\begin{definition}
\label{l20}
 For $p>0$ and $\zeta\in\mathbb {S}_n$,  we say that a positive Borel measure $\mu$ on $\mathbb {B}_n$ is a $p$-Carleson measure, denoted $\mu\in\mathcal{CM}_p$, if  
$$ 
\|\mu\|_{\mathcal{CM}_p}=\sup_{Q_r(\zeta)}  \left(\frac{\mu({Q}_{r}{(\zeta))}}{r^{np}}\right)^\frac12<\infty,
$$
where the supremun is taken over all Carleson tubes
$${Q}_{r}(\zeta)=\{z\in \mathbb B_n:\,|1-\langle z, \zeta\rangle|<r \}.$$
\end{definition}

In particular, $1$-Carleson measure is the classical Carleson measure. Moreover, it is easy to see that every $p$-Carleson measure must be finite, and a finite positive measure $\mu$ is a $p$-Carleson measure if and only if 
$$ 
\sup \left\{ \frac{\mu({Q}_{r}{(\zeta))}}{r^{np}}:\zeta\in \mathbb S_n, 0<r<\delta\right\}<\infty
$$
holds for any given positive constant $\delta\le 1$.

\begin{lemma}
\label{l21}
Let $p,q\in (0,\infty)$ and $\mu$ be a positive Borel measure on $\mathbb B_n$. Then $\mu\in {\mathcal{CM}}_p$ if and only if 
$$\|\mu\|_{{\mathcal{CM}}_{p,q}}=\sup\limits_{z\in\mathbb B_n}\left(\int_{\mathbb B_n}
\frac{(1-|z|^2)^{nq}}{|1-\langle z, \omega\rangle|^{n(p+q)}}\,d\mu(\omega)\right)^\frac12< \infty.$$
\end{lemma}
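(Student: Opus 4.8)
\emph{Proof strategy.} The plan is to prove the two implications separately. The direction ``$\|\mu\|_{\mathcal{CM}_{p,q}}<\infty\Rightarrow\mu\in\mathcal{CM}_p$'' is an immediate one-point test: note first that taking $z=0$ shows $\mu(\mathbb B_n)<\infty$, so by the remark following Definition~\ref{l20} it suffices to control $\mu(Q_r(\zeta))/r^{np}$ for $\zeta\in\mathbb S_n$ and $0<r<1$. Fixing such a tube and evaluating the supremum defining $\|\mu\|_{\mathcal{CM}_{p,q}}$ at the admissible point $z=(1-r)\zeta$, the elementary facts $1-|z|^2=r(2-r)\approx r$ and $1-\langle z,\omega\rangle=(1-\langle\zeta,\omega\rangle)+r\langle\zeta,\omega\rangle$ give $|1-\langle z,\omega\rangle|<2r$ whenever $\omega\in Q_r(\zeta)$; restricting the integral to $Q_r(\zeta)$ then yields
$$
\|\mu\|_{\mathcal{CM}_{p,q}}^2\ \ge\ \int_{Q_r(\zeta)}\frac{(1-|z|^2)^{nq}}{|1-\langle z,\omega\rangle|^{n(p+q)}}\,d\mu(\omega)\ \gtrsim\ \frac{r^{nq}}{(2r)^{n(p+q)}}\,\mu\bigl(Q_r(\zeta)\bigr)\ \approx\ \frac{\mu(Q_r(\zeta))}{r^{np}},
$$
and the supremum over tubes gives $\|\mu\|_{\mathcal{CM}_p}\lesssim\|\mu\|_{\mathcal{CM}_{p,q}}$.

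For the converse, suppose $\mu\in\mathcal{CM}_p$, so that $\mu(Q_r(\zeta))\le r^{np}\|\mu\|_{\mathcal{CM}_p}^2$ for every $\zeta\in\mathbb S_n$ and every $r>0$. Fix $z\in\mathbb B_n$; if $|z|\le\tfrac12$ then $1-|z|^2$ and $|1-\langle z,\omega\rangle|$ are both comparable to $1$, so the integral is $\lesssim\mu(\mathbb B_n)\lesssim\|\mu\|_{\mathcal{CM}_p}^2$, and we may assume $|z|>\tfrac12$. Put $t=1-|z|^2$ and $\zeta=z/|z|\in\mathbb S_n$, and split $\mathbb B_n$ into the ``annuli'' $E_0=\{\omega:|1-\langle z,\omega\rangle|<t\}$ and $E_k=\{\omega:2^{k-1}t\le|1-\langle z,\omega\rangle|<2^{k}t\}$ for $k\ge1$; since $|1-\langle z,\omega\rangle|\le2$ on $\mathbb B_n$, only the finitely many $E_k$ with $2^{k}t\lesssim1$ are nonempty. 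The key geometric observation is that from $1-\langle z,\omega\rangle=(1-|z|)+|z|(1-\langle\zeta,\omega\rangle)$, the bound $1-|z|\le1-|z|^2=t$, and $|z|>\tfrac12$, one gets $|1-\langle\zeta,\omega\rangle|<2^{k+2}t$ on $E_k$, i.e. $E_k\subseteq Q_{2^{k+2}t}(\zeta)$. Since on $E_k$ the integrand is at most $t^{nq}(2^{k-1}t)^{-n(p+q)}$ (and $\approx t^{-np}$ on $E_0$, because $|1-\langle z,\omega\rangle|\approx t$ there), one obtains
$$
\int_{E_k}\frac{(1-|z|^2)^{nq}}{|1-\langle z,\omega\rangle|^{n(p+q)}}\,d\mu(\omega)\ \lesssim\ \frac{t^{nq}}{(2^{k}t)^{n(p+q)}}\,\mu\bigl(Q_{2^{k+2}t}(\zeta)\bigr)\ \lesssim\ \frac{t^{nq}(2^{k}t)^{np}}{(2^{k}t)^{n(p+q)}}\,\|\mu\|_{\mathcal{CM}_p}^2\ \approx\ 2^{-knq}\,\|\mu\|_{\mathcal{CM}_p}^2 .
$$
Summing over $k\ge0$ and using $\sum_{k\ge0}2^{-knq}<\infty$ (this is where $q>0$ enters decisively), we conclude $\|\mu\|_{\mathcal{CM}_{p,q}}^2\lesssim\|\mu\|_{\mathcal{CM}_p}^2$, uniformly in $z$.

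I expect the only delicate point to be the annular decomposition together with the inclusion $E_k\subseteq Q_{2^{k+2}t}(\zeta)$, which amounts to comparing the quasi-distance $|1-\langle z,\omega\rangle|$ anchored at the interior point $z$ with the boundary quasi-distance $|1-\langle\zeta,\omega\rangle|$; the inequality $1-|z|\le1-|z|^2$ and the reduction to $|z|>\tfrac12$ are precisely what make this comparison work, and everything else is summation of a geometric series against the defining inequality of $\mathcal{CM}_p$. As an alternative for the converse one could invoke the classical estimate $\int_{\mathbb B_n}(1-|\omega|^2)^{c}|1-\langle z,\omega\rangle|^{-(n+1+c+s)}\,d\nu(\omega)\approx(1-|z|^2)^{-s}$ (valid for $c>-1$, $s>0$) together with a subordination argument, but the dyadic approach is preferable here since it applies verbatim to an arbitrary positive Borel measure.
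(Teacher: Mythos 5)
Your proof is correct and follows essentially the same route as the paper: the forward direction is the same one-point test at $z=(1-r)\zeta$, and the converse is the same dyadic annular decomposition around $\zeta=z/|z|$ at scale $1-|z|$, with the Carleson bound on each annulus summed as a geometric series in $2^{-nqk}$ (which is exactly where $q>0$ enters in the paper too). The only cosmetic difference is that you index the annuli by the interior quasi-distance $|1-\langle z,\omega\rangle|$ and then verify $E_k\subseteq Q_{2^{k+2}t}(\zeta)$, whereas the paper indexes them by the boundary quasi-distance $|1-\langle\zeta,\omega\rangle|$ and bounds $|1-\langle z,\omega\rangle|$ from below on each ring; the two comparisons are equivalent and both are handled correctly.
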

\begin{proof} First suppose $\|\mu\|_{{\mathcal{CM}}_{p,q}}<\infty$. For any $\zeta\in
\mathbb S_n$ and $0<r<1$ we take the point $z=(1-r)\zeta$. Note that 
$$1-\langle z, \omega \rangle=(1-r)(1-\langle \zeta, \omega\rangle)+r,$$
so
$$|1- \langle z, \omega \rangle |\leq (1-r)r+r<2r\quad\forall\quad \omega\in
{Q}_{r}(\zeta).
$$ 
This implies  
$$
\label{122}
\frac{(1-|z|^2)^{nq}}{|1-\langle z, \omega\rangle|^{n(p+q)}}
\geq \frac{r^{nq}}{(2r)^{n(p+q)}}\gtrsim \frac{1}{r^{np}}
\quad\forall\quad\omega\in {Q}_{r}(\zeta),
$$
and then
$$ \|\mu\|^2_{{CM}_{p,q}} \geq \int_{ Q_{r}(\zeta)}
\frac{(1-|z|^2)^{nq}}{|1-\langle z, \omega\rangle|^{n(p+q)}}d\mu(\omega)
\gtrsim \frac{\mu({Q}_{r}(\zeta))}{r^{np}}.
$$
Consequently, $\mu\in {\mathcal{CM}}_p$.

Next, assume $\mu \in {\mathcal{CM}}_p$. Since $\mu$ is finite, we have 
$$\sup\limits_{|z|\leq\frac{3}{4}}\int_{\mathbb B_n}
\frac{(1-|z|^2)^{nq}}{|1-\langle z, \omega\rangle|^{n(p+q)}}d\mu(\omega)
\lesssim \mu(\mathbb B_n)\lesssim \|\mu\|^2_{{\mathcal{CM}}_p}<\infty.
$$
If $\frac{3}{4}<|z|<1$, then choosing 
$$
\begin{cases}
k\in\mathbb Z^+;\\
\zeta={z}/{|z|};\\
r_k=2^{k+1}(1-|z|);\\
E_0={Q}_{r_0}(\zeta);\\
E_k={Q}_{r_{k}}(\zeta)\setminus {Q}_{r_{k-1}},
\end{cases}
$$
and using $\mu\in\mathcal{CM}_p$ we obtain 
$$
\mu(E_k)\leq\mu({Q}_{r_{k}}(\zeta))\lesssim 
 2^{n(k+1)p}(1-|z|)^{np}\|\mu\|^2_{\mathcal{CM}_{p}}.
 $$
and
$$
|1-\langle z, \omega\rangle|\geq |z\|1-\langle \zeta, \omega\rangle|-(1-|z|)\geq 2^{k-1}(1-|z|).
\quad\forall\quad \omega\in E_k,
$$
whence
$$
\int_{E_k}\frac{(1-|z|^2)^{nq}}{|1-\langle z, \omega\rangle|^{n(p+q)}}\,d\mu(\omega)\lesssim
\frac{(1-|z|)^{nq}}{\big(2^{k-1}(1-|z|)\big)^{n(p+q)}}\big(2^{n(k+1)p}(1-|z|)^{np}\big)\|\mu\|^2_{\mathcal{CM}_{p}}
\lesssim \frac{\|\mu\|^2_{\mathcal{CM}_{p}}}{2^{nqk}}.
$$
This last estimate gives
$$
\|\mu\|^2_{{CM}_{p,q}}=\sup\limits_{z\in\mathbb B_n}\left(\int_{E_0}+\sum\limits_{k=1}^{\infty}\int_{E_k}\right)\frac{(1-|\omega|^2)^{nq}}{|1-\langle z, \omega\rangle|^{n(p+q)}}\,d\mu(\omega)
\lesssim \|\mu\|^2_{\mathcal{CM}_{p}}\sum\limits_{k=0}^{\infty}2^{-nqk}<\infty.
$$
\end{proof}

\begin{lemma}
\label{l22} \cite{OrtegaF}
Let $s>-1$, $r,\,t\geq 0$ and $r+t-s>n+1$. Then we have 
$$
\int_{\mathbb B_n}\frac{(1-|\zeta|^2)^s}{|1-z\bar{\zeta}'|^r|1-\omega\bar{\zeta}'|^t}dv(\zeta)
\lesssim \begin{cases}{|1-\omega\bar{z}'|^{s+n+1-r-t}}&\mbox{if}\,\, r-s,\,t-s<n+1;
\\\frac{(1-|z|^2)^{s+n+1-r}}{|1-\omega\bar{z}'|^{t}}&\mbox{if}\,\, t-s<n+1<r-s;\\ \frac{(1-|z|^2)^{s+n+1-r}}{|1-\omega\bar{z}'|^{t}}+\frac{(1-|\omega|^2)^{s+n+1-t}}{|1-\omega\bar{z}'|^{r}}&\mbox{if}\,\, r-s,\,t-s>n+1.
\end{cases}
$$
\end{lemma}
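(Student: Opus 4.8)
The plan is to deduce Lemma~\ref{l22} from the one-pole Forelli--Rudin estimate by partitioning $\mathbb B_n$ according to the two poles $z,\omega$. Throughout write $\rho=|1-\langle z,\omega\rangle|$ (with $z\bar{\zeta}'=\langle z,\zeta\rangle$ as in the statement), and I will use three standard ingredients: (i) the Forelli--Rudin estimate $\int_{\mathbb B_n}(1-|\zeta|^2)^s|1-\langle z,\zeta\rangle|^{-\beta}\,dv(\zeta)\approx(1-|z|^2)^{s+n+1-\beta}$ valid when $\beta-s>n+1$ (see, e.g., \cite{Zhu1}), together with its two localizations obtained by a routine dyadic decomposition of $\mathbb B_n$ into Carleson shells $\{|1-\langle z,\zeta\rangle|\approx2^j\delta\}$, namely $\int_{\{|1-\langle z,\zeta\rangle|<\delta\}}(1-|\zeta|^2)^s|1-\langle z,\zeta\rangle|^{-r}\,dv(\zeta)\lesssim\delta^{s+n+1-r}$ if $r-s<n+1$, and $\int_{\{|1-\langle z,\zeta\rangle|>\delta\}}(1-|\zeta|^2)^s|1-\langle z,\zeta\rangle|^{-\beta}\,dv(\zeta)\lesssim\delta^{s+n+1-\beta}$ if $\beta-s>n+1$; (ii) the pseudo-triangle inequality $\rho\lesssim\max\{|1-\langle z,\zeta\rangle|,|1-\langle\omega,\zeta\rangle|\}$ for $\zeta\in\mathbb B_n$; and (iii) the elementary bounds $1-|z|^2\le2\rho$ and $1-|\omega|^2\le2\rho$. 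Split $\mathbb B_n=E_1\cup E_2$ with $E_1=\{\zeta:\,|1-\langle\omega,\zeta\rangle|\ge|1-\langle z,\zeta\rangle|\}$ and $E_2=\mathbb B_n\setminus E_1$; then (ii) gives $|1-\langle\omega,\zeta\rangle|\gtrsim\rho$ on $E_1$ and $|1-\langle z,\zeta\rangle|\gtrsim\rho$ on $E_2$, so it suffices to estimate $I_1:=\int_{E_1}(1-|\zeta|^2)^s|1-\langle z,\zeta\rangle|^{-r}|1-\langle\omega,\zeta\rangle|^{-t}\,dv(\zeta)$, the bound for the $E_2$-integral following by exchanging $(z,r)\leftrightarrow(\omega,t)$.

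The core step is to show, under $r+t-s>n+1$, that $I_1\lesssim\rho^{s+n+1-r-t}$ if $r-s<n+1$ and $I_1\lesssim\rho^{-t}(1-|z|^2)^{s+n+1-r}$ if $r-s>n+1$. For this I split $E_1$ further by whether $|1-\langle z,\zeta\rangle|<\rho$ or $|1-\langle z,\zeta\rangle|\ge\rho$. Where $|1-\langle z,\zeta\rangle|<\rho$, fact (ii) forces $|1-\langle\omega,\zeta\rangle|\gtrsim\rho$, hence $|1-\langle\omega,\zeta\rangle|^{-t}\lesssim\rho^{-t}$, and the remaining factor $\rho^{-t}\int_{\{|1-\langle z,\zeta\rangle|<\rho\}}(1-|\zeta|^2)^s|1-\langle z,\zeta\rangle|^{-r}\,dv(\zeta)$ is $\lesssim\rho^{-t}\rho^{s+n+1-r}$ when $r-s<n+1$ (by the first localization) and is $\lesssim\rho^{-t}(1-|z|^2)^{s+n+1-r}$ when $r-s>n+1$ (dominating by the full Forelli--Rudin integral). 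Where $|1-\langle z,\zeta\rangle|\ge\rho$, being on $E_1$ gives $|1-\langle\omega,\zeta\rangle|\ge|1-\langle z,\zeta\rangle|\ge\rho$, whence $|1-\langle z,\zeta\rangle|^{-r}|1-\langle\omega,\zeta\rangle|^{-t}\le|1-\langle z,\zeta\rangle|^{-(r+t)}$ and the second localization (applicable since $r+t-s>n+1$) yields $\int_{\{|1-\langle z,\zeta\rangle|\ge\rho\}}(1-|\zeta|^2)^s|1-\langle z,\zeta\rangle|^{-(r+t)}\,dv(\zeta)\lesssim\rho^{s+n+1-r-t}$, which when $r-s>n+1$ is $\le\rho^{-t}(1-|z|^2)^{s+n+1-r}$ by (iii) because $s+n+1-r<0$. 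Adding the two pieces proves the claimed bound for $I_1$.

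Granting this, the three cases of the lemma fall out by combining $I_1$ with its $E_2$-analogue. If $r-s,\,t-s<n+1$, both integrals are $\lesssim\rho^{s+n+1-r-t}$, giving the stated power of $|1-\langle z,\omega\rangle|$. If $t-s<n+1<r-s$, then $I_1\lesssim\rho^{-t}(1-|z|^2)^{s+n+1-r}$ while the $E_2$-integral is $\lesssim\rho^{s+n+1-r-t}$, which by (iii) is again $\lesssim\rho^{-t}(1-|z|^2)^{s+n+1-r}$ (the symmetric subcase $r-s<n+1<t-s$ is obtained by swapping $z$ and $\omega$). If $r-s,\,t-s>n+1$, then $I_1\lesssim\rho^{-t}(1-|z|^2)^{s+n+1-r}$ and the $E_2$-integral is $\lesssim\rho^{-r}(1-|\omega|^2)^{s+n+1-t}$, whose sum is the asserted two-term bound. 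The one delicate point — and the reason for splitting $E_1$ at the scale $\rho$ rather than applying Forelli--Rudin on all of $\mathbb B_n$ — is that in the first case the exponents $r$ and $r+t$ sit on opposite sides of the critical value $s+n+1$, so a direct argument produces a spurious factor $\log\frac{1}{1-|z|^2}$; restricting the $r$-singularity to $\{|1-\langle z,\zeta\rangle|<\rho\}$ (where it is subcritical) and the $(r+t)$-singularity to $\{|1-\langle z,\zeta\rangle|\ge\rho\}$ (where it is supercritical and decays) is exactly what removes it. I expect this borderline bookkeeping to be the main obstacle; everything else is the mechanical assembly above.
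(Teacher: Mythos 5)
Your proof is correct. Note first that the paper itself offers no argument for this lemma: it is quoted verbatim from Ortega--F\'abrega \cite{OrtegaF}, so there is no in-paper proof to compare against; what you have written is a self-contained derivation of the cited estimate. Your decomposition is the standard (and, I believe, essentially the original) route: split $\mathbb B_n$ by which of the two kernels dominates, then split again at the scale $\rho=|1-\langle z,\omega\rangle|$ so that on each piece exactly one of the two localized one-pole estimates applies. I checked the points where such arguments usually go wrong and they all hold here: the quasi-triangle inequality for $|1-\langle\cdot,\cdot\rangle|^{1/2}$ gives $|1-\langle\omega,\zeta\rangle|\gtrsim\rho$ on all of $E_1$; the inequality $1-|z|^2\le 2\rho$ is what converts $\rho^{s+n+1-r}$ into $(1-|z|^2)^{s+n+1-r}$ when $s+n+1-r<0$, and you invoke it only in that regime; the signs $r,t\ge 0$ are used exactly where you discard or merge kernel factors; and the hypothesis $r+t-s>n+1$ enters precisely in the outer-shell estimate on $E_1''$. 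The two localized Forelli--Rudin bounds you take as ingredient (i) do follow from the dyadic shell decomposition together with $\int_{\{|1-\langle z,\zeta\rangle|<\delta\}}(1-|\zeta|^2)^s\,dv(\zeta)\lesssim\delta^{s+n+1}$ (this is where $s>-1$ is needed), and this is the same shell machinery the paper itself deploys in the proofs of Lemma 1.2 and Theorem 1.4, so asserting them as routine is acceptable. Your closing remark correctly identifies why the naive bound $|1-\langle\omega,\zeta\rangle|^{-t}\lesssim\rho^{-t}$ followed by a global one-pole estimate fails in the first case (it yields only $\rho^{-t}$, weaker than $\rho^{s+n+1-r-t}$ when $s+n+1-r>0$), which is exactly what the split at scale $\rho$ repairs.
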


Some essential descriptions of $\mathcal{HC}^s$ presented later, depend heavily on the following general invariance for the modified Carleson measures.

\begin{theorem}
\label{l23}
Suppose 
$$
\begin{cases}
p,\eta\in (0, \frac{n+1}{n});\\ 
a>\max\big\{-\frac{1+\eta}{2},-\frac{1+\eta+n(1-p)}{2}\big\};\\
b>\frac{1+\eta}{2}.
\end{cases}
$$ 
For any Lebesgue measurable function $f$ on $\mathbb{B}_n$ let
$$
\mathsf{T}_{a,\,b}f(z)=\int_{\mathbb{B}_n}\frac{(1-|\omega|^{2})^{b-1}}{|1-\langle z, \omega\rangle|^{n+a+b}}f(\omega)dv(\omega)\ \ \forall\ \  z\in\mathbb{B}_n.
$$
If $|f(z)|^{2}(1-|z|^{2})^{\eta}dv(z)$ belongs to $\mathcal{CM}_p$, then $|\mathsf{T}_{a,b}f(z)|^{2}(1-|z|^{2})^{2a+\eta}dv(z)$ also belongs to $\mathcal{CM}_p$.
\end{theorem}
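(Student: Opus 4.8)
Set $d\mu:=|f|^2(1-|\cdot|^2)^{\eta}\,dv$, so that $\mu\in\mathcal{CM}_p$ by hypothesis; in particular $\mu$ is finite, hence $f\in L^2\big((1-|\cdot|^2)^{\eta}\,dv\big)$, and since the kernel of $\mathsf{T}_{a,b}$ is bounded for each fixed $z\in\B$, $\mathsf{T}_{a,b}f$ is everywhere defined. I would check the Carleson condition for $|\mathsf{T}_{a,b}f|^2(1-|\cdot|^2)^{2a+\eta}\,dv$ directly, by the classical local/global splitting adapted to the non-isotropic tubes. Granting the weighted $L^2$-bound below (applied to $f$ it also gives $\int_\B|\mathsf{T}_{a,b}f|^2(1-|\cdot|^2)^{2a+\eta}\,dv<\infty$), it suffices by the remark after Definition~\ref{l20} to prove $\int_{Q_r(\zeta)}|\mathsf{T}_{a,b}f|^2(1-|\cdot|^2)^{2a+\eta}\,dv\lesssim r^{np}\|\mu\|_{\mathcal{CM}_p}^2$ for every $\zeta\in\bB$ and all small $r>0$. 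Fix $\zeta,r$, fix a large constant $K$ (to be chosen from the tube geometry), and split $f=f_1+f_2$ with $f_1=f\,\chi_{Q_{Kr}(\zeta)}$; then $\int_{Q_r(\zeta)}|\mathsf{T}_{a,b}f|^2(1-|\cdot|^2)^{2a+\eta}\,dv\lesssim\mathrm{I}+\mathrm{II}$, where $\mathrm{I}:=\int_{Q_r(\zeta)}|\mathsf{T}_{a,b}f_1|^2(1-|\cdot|^2)^{2a+\eta}\,dv$ and $\mathrm{II}:=\int_{Q_r(\zeta)}|\mathsf{T}_{a,b}f_2|^2(1-|\cdot|^2)^{2a+\eta}\,dv$.

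For $\mathrm{I}$ I would use that $\mathsf{T}_{a,b}$ maps $L^2\big((1-|\cdot|^2)^{\eta}\,dv\big)$ boundedly into $L^2\big((1-|\cdot|^2)^{2a+\eta}\,dv\big)$. Pulling the weights into the kernel, this is the $L^2(dv)$-boundedness of the operator with kernel $(1-|z|^2)^{a+\eta/2}(1-|w|^2)^{b-1-\eta/2}|1-\langle z,w\rangle|^{-(n+a+b)}$, which is ``balanced'' because $n+1+(a+\tfrac{\eta}{2})+(b-1-\tfrac{\eta}{2})=n+a+b$; running the Schur test with the power weight $(1-|w|^2)^{-\beta}$ — the two arising integrals being evaluated by the standard estimate $\int_\B(1-|w|^2)^{s}|1-\langle z,w\rangle|^{-c}\,dv(w)\approx(1-|z|^2)^{n+1+s-c}$, valid for $c>n+1+s>0$ — goes through for a suitable range of $\beta$ precisely when $-(2a+\eta)<1<2b-\eta$, that is, under the hypotheses $a>-\tfrac{1+\eta}{2}$ and $b>\tfrac{1+\eta}{2}$. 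Consequently $\mathrm{I}\le\|\mathsf{T}_{a,b}f_1\|_{L^2((1-|\cdot|^2)^{2a+\eta}dv)}^2\lesssim\int_{Q_{Kr}(\zeta)}|f|^2(1-|\cdot|^2)^{\eta}\,dv=\mu(Q_{Kr}(\zeta))\lesssim(Kr)^{np}\|\mu\|_{\mathcal{CM}_p}^2$.

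For $\mathrm{II}$ I would first invoke the quasi-metric property of $|1-\langle\cdot,\cdot\rangle|^{1/2}$ on $\overline{\B}$: for $K$ large, $\omega\in Q_r(\zeta)$ and $w\notin Q_{Kr}(\zeta)$ force $|1-\langle\omega,w\rangle|\approx|1-\langle\zeta,w\rangle|$, so for all $\omega\in Q_r(\zeta)$ one has $|\mathsf{T}_{a,b}f_2(\omega)|\lesssim M(\zeta):=\int_{\B\setminus Q_{Kr}(\zeta)}(1-|w|^2)^{b-1}|f(w)|\,|1-\langle\zeta,w\rangle|^{-(n+a+b)}\,dv(w)$. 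Since $2a+\eta>-1$ we have $\int_{Q_r(\zeta)}(1-|\omega|^2)^{2a+\eta}\,dv(\omega)\approx r^{n+1+2a+\eta}$, whence $\mathrm{II}\lesssim r^{n+1+2a+\eta}M(\zeta)^2$. To estimate $M(\zeta)$ I would decompose $\B\setminus Q_{Kr}(\zeta)$ into the non-isotropic dyadic annuli $A_k=Q_{2^{k+1}Kr}(\zeta)\setminus Q_{2^{k}Kr}(\zeta)$, $k\ge 0$, on which $|1-\langle\zeta,w\rangle|\approx 2^kr$, apply Cauchy--Schwarz on each $A_k$ through $(1-|w|^2)^{b-1}=(1-|w|^2)^{b-1-\eta/2}(1-|w|^2)^{\eta/2}$, and use $\int_{A_k}(1-|w|^2)^{2b-2-\eta}\,dv\lesssim(2^kr)^{n-1+2b-\eta}$ (valid since $b>\tfrac{1+\eta}{2}$) together with $\int_{A_k}|f|^2(1-|\cdot|^2)^{\eta}\,dv\le\mu\big(Q_{2^{k+1}Kr}(\zeta)\big)\lesssim(2^kr)^{np}\|\mu\|_{\mathcal{CM}_p}^2$; this gives
$$ M(\zeta)\lesssim\|\mu\|_{\mathcal{CM}_p}\sum_{k\ge0}(2^kr)^{\frac{n-1+2b-\eta+np}{2}-(n+a+b)}=\|\mu\|_{\mathcal{CM}_p}\sum_{k\ge0}(2^kr)^{\frac{np-n-1-\eta-2a}{2}}. $$
The geometric series converges — to a constant multiple of $r^{(np-n-1-\eta-2a)/2}$ — exactly when $np-n-1-\eta-2a<0$, i.e. $a>-\tfrac{1+\eta+n(1-p)}{2}$, which is the remaining lower bound on $a$. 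Hence $\mathrm{II}\lesssim r^{n+1+2a+\eta}\cdot r^{np-n-1-\eta-2a}\|\mu\|_{\mathcal{CM}_p}^2=r^{np}\|\mu\|_{\mathcal{CM}_p}^2$, and together with the estimate for $\mathrm{I}$ this shows that $|\mathsf{T}_{a,b}f|^2(1-|\cdot|^2)^{2a+\eta}\,dv\in\mathcal{CM}_p$.

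The hard part is the global term. One must establish — via the quasi-triangle inequality for $|1-\langle\cdot,\cdot\rangle|^{1/2}$, which is where the geometry of the non-isotropic Carleson tubes (rather than Euclidean balls) genuinely enters — that $|1-\langle\omega,w\rangle|\approx|1-\langle\zeta,w\rangle|$ uniformly over $\omega\in Q_r(\zeta)$ and $w\in A_k$, with constants free of $k$ and $r$; and one must keep the dyadic sum at the right scales so that the three exponent balances (the weighted volume of $Q_r(\zeta)$, the Forelli--Rudin integral of $(1-|w|^2)^{2b-2-\eta}$, and the $\mathcal{CM}_p$-growth of $\mu$) line up to reproduce exactly $r^{np}$. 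It is in these computations that the precise lower bounds on $a$, the lower bound on $b$, and the standing ranges $p,\eta\in(0,\tfrac{n+1}{n})$ get consumed; the local term is then routine once the sharp weighted $L^2$-boundedness of $\mathsf{T}_{a,b}$ is in hand.
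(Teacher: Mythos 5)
Your proposal is correct and follows essentially the same route as the paper's proof: the same local/global splitting of the integral around an enlarged Carleson tube, the same Schur test on the balanced kernel $(1-|z|^2)^{a+\eta/2}(1-|\omega|^2)^{b-1-\eta/2}|1-\langle z,\omega\rangle|^{-(n+a+b)}$ for the local piece (consuming $a>-\tfrac{1+\eta}{2}$ and $b>\tfrac{1+\eta}{2}$), and the same dyadic-annulus decomposition with the quasi-triangle inequality, Cauchy--Schwarz, and a geometric series converging under $a>-\tfrac{1+\eta+n(1-p)}{2}$ for the global piece. The exponent bookkeeping matches the paper's line by line, so nothing further is needed.
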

\begin{proof} Set 
$$
d\mu_{f,\eta}(z)=|f(z)|^2(1-|z|^2)^{\eta}dv(z)\ \ \&\ \ d\mu_{\mathsf T_{a,b}f,\eta}(z)=|\mathsf{T}_{a,\,b}f(z)|^2(1-|z|^2)^{2a+\eta}dv(z).
$$
It is enough to prove
$$
\|\mu_{f,\eta}\|_{\mathcal{CM}_p}<\infty\Rightarrow \|\mu_{\mathsf T_{a,b}f,\eta}\|_{\mathcal{CM}_p}<\infty.
$$ 
To do so, let $\delta>0$, $\zeta\in\mathbb S_n$ and $k\in\mathbb Z^+$ such that $2^{k}\delta\leq1$. Then, for the Carleson tube
$$
{Q}_{\delta}(\zeta)=\{z\in \mathbb B_n:\,|1-\langle z, \zeta\rangle|<\delta \},
$$
we have
\begin{align*}
&\mu_{\mathsf T_{a,b}f,\eta}({Q}_{\delta}(\zeta))=
\int_{{Q}_{\delta}(\zeta)}|\mathsf{T}_{a,b}f(z)|^{2}(1-|z|^{2})^{2a+\eta}\,d\nu(z)\\&=
\int_{{Q}_{\delta}(\zeta)}\left( (\int_{{Q}_{2\delta}(\zeta)}+
\int_{\mathbb B_n\setminus {Q}_{2\delta}(\zeta)})\frac{|f(\omega)|(1-|\omega|^{2})^{b-1}}{|1-\langle z, \omega\rangle|^{n+a+b}}d\nu(\omega)
 \right)^{2}{(1-|z|^2)^{2a+\eta}}\,d\nu(z)\\
 &\lesssim
\int_{{Q}_{\delta}(\zeta)}\left( \int_{{Q}_{2\delta}(\zeta)}
\frac{|f(\omega)|(1-|\omega|^{2})^{b-1}}{|1-\langle z, \omega\rangle|^{n+a+b}}d\nu(\omega)\right)^2{(1-|z|^2)^{2a+\eta}}\,d\nu(z)\\
&+
\int_{{Q}_{\delta}(\zeta)}\left( \int_{\mathbb B_n\setminus {Q}_{2\delta}(\zeta)}
\frac{|f(\omega)|(1-|\omega|^{2})^{b-1}}{|1-\langle z, \omega\rangle|^{n+a+b}}d\nu(\omega)\right)^2{(1-|z|^2)^{2a+\eta}}\,d\nu(z)\\&
\equiv\mathsf{Int_1}+\mathsf{Int_2}.
\end{align*}

For $\mathsf{Int_1}$, let
$$
k(z,\omega)=\frac{(1-|z|^2)^{a+\frac{\eta}{2}}(1-|\omega|^2)^{b-1-\frac{\eta}{2}}}{|1-\langle z, \omega\rangle|^{n+a+b}}
$$
and $\mathsf{T}$ be its induced integral operator:
$$
\mathsf{T}f(z)=\int_{\mathbb B_n}f(\omega)k(z,\omega)d\nu(\omega).
$$
Then, upon choosing
$$
\max\Big\{-a-\frac{\eta}{2}-1,-b+\frac{\eta}{2}\Big\}<\gamma<\min\Big\{a+\frac{\eta}{2},b-1-\frac{\eta}{2}\Big\}
$$
and employing Lemma \ref{l22}, we get
$$
\begin{cases}
\int_{\mathbb B_n}k(z,\omega)(1-|\omega|^2)^{\gamma}d\nu(\omega)\lesssim (1-|z|^2)^{\gamma};\\
\int_{\mathbb B_n}k(z,\omega)(1-|z|^2)^{\gamma}d\nu(z)\lesssim (1-|\omega|^2)^{\gamma}.
\end{cases}
$$
Therefore, we apply the Schur's test \cite[Theorem 2.9]{Zhu1} to get that $\mathsf{T}$ is bounded on $\mathcal{L}^2(\mathbb B_n)$. Choosing 
$$
g(\omega)=(1-|\omega|^2)^{\frac{\eta}{2}}f(\omega)1_{Q_{2\delta}}(\zeta)(\omega)
$$
where $1_{Q_{2\delta}}(\zeta)(\omega)$ is the characteristic function of ${Q_{2\delta}(\zeta)}$, we employ the boundedness of $\mathsf{T}$ and $\mu_{f,\eta}\in \mathcal{CM}_p$ to get 
$$
\mathsf{Int_1}\lesssim \int_{\mathbb B_n}\left(\int_{\mathbb B_n}g(\omega)k(z,\omega)d\nu(\omega)
\right)^{2}d\nu(z)\lesssim\int_{\mathbb B_n}|g(z)|^{2}d\nu(z)\lesssim \delta^{np}\|\mu_{f,\eta}\|_{\mathcal{CM}_p}^2.
$$

For $\mathsf{Int_2}$, let 
$$A_j=\{\omega\in\mathbb B_n:\,2^{j}\delta\leq|1-\langle \omega, \zeta\rangle|<2^{j+1}\delta\}\ \ \forall\ \ j\in\mathbb Z^+.
$$
Note that $z\in A_j$ and $\omega\in Q_{\delta}(\zeta)$ ensure
\begin{align*}
&|1-\langle \omega, z\rangle|^{\frac{1}{2}}\geq |1-\langle z, \zeta\rangle|^{\frac{1}{2}}-|1-\langle \omega, \zeta\rangle|^{\frac{1}{2}}
\geq 
(2^{\frac{j}{2}}-1)\delta^{\frac{1}{2}}\geq \frac{1}{2}(\sqrt{2}-1)2^{\frac{j}{2}}\delta^{\frac{1}{2}},
\end{align*}
where the first triangle inequality is from  \cite[p.66]{Rudin}. 
Note that 
$$\int_{Q_{\delta}(\zeta)}(1-|z|^2)^{2a+\eta}d\nu(z)\lesssim \delta^{2a+\eta+n+1}\quad\&\quad
\mathbb B_n\setminus {Q}_{2\delta}(\zeta)=\cup_{j=1}^{\infty}A_j.
$$ 
So, using $a>-\frac{1+\eta+n(1-p)}{2}$, $p\in(0,\frac{n+1}{n})$ and the  
H\"older inequality, we get 
\begin{align*}
&\mathsf{Int_2}\lesssim \int_{{Q}_{\delta}(\zeta)}\left(\sum\limits_{j=1}^{\infty}\int_{A_j}\frac{|f(\omega)|(1-|\omega|^{2})^{b-1}}{|1-\langle z, \omega\rangle|^{n+a+b}}d\nu(\omega)
\right)^2\frac{d\nu(z)}{(1-|z|^2)^{-2a-\eta}}
\\&\lesssim
\int_{{Q}_{\delta}(\zeta)}\left(\sum\limits_{j=1}^{\infty}(2^{j}\delta)^{-(n+a+b)}\int_{Q_{2^{j+1}\delta}(\zeta)（}\frac{|f(\omega)|}{(1-|\omega|^2)^{1-b}}d\nu(\omega)\right)^2 \frac{d\nu(z)}{(1-|z|^2)^{-2a-\eta}}
\\&\lesssim \delta^{1+\eta-n-2b}\left( \sum\limits_{j=1}^{\infty}2^{-j(n+a+b)} \frac{\left[\int_{Q_{2^{j+1}\delta}(\zeta)}|f(\omega)|^2(1-|\omega|^2)^{\eta}d\nu(\omega)\right]^{\frac{1}{2}}}{\left[
\int_{Q_{2^{j+1}\delta}(\zeta)}(1-|\omega|^2)^{2b-2-\eta}\,d\nu(\omega)\right]^{-\frac{1}{2}}} \right)^{2}
\\&\lesssim
\delta^{1+\eta-n-2b}
\left( \sum\limits_{j=1}^{\infty}2^{-j(n+a+b)} (2^{j}\delta)^{\frac{np}{2}}(2^{j}\delta)^{\frac{2b+n-\eta-1}{2}}\|\mu\|_{\mathcal{CM}_p}
\right)^2
\\&\lesssim
\delta^{np}\left(\sum\limits_{j=1}^{\infty}2^{-j(\frac{1+\eta+n-np}{2}+a)}\right)^{2}\|\mu\|^2_{\mathcal{CM}_p}
\\&\lesssim \delta^{np}\|\mu\|^2_{\mathcal{CM}_p}.
\end{align*}
The foregoing estimates for $\mathsf{Int}_1$ and $\mathsf{Int}_2$ imply that $\mu_{\mathsf T_{a,b}f,\eta}\in \mathcal{CM}_{p}$.
\end{proof}

\section{Fundamental characterizations}\label{s3}
\setcounter{equation}{0}

\subsection{Via M\"obius transforms}\label{s31} Fundamentally, we would first like to know:

\medskip
{\it What is the behavior of a function in $\mathcal{HC}^s$ under the M\"obius self-mappings of $\mathbb B_n$\ ?}
\medskip

The following characterization of $\mathcal{HC}^s$ shows that this space is not M\"obius-invariant unless $s=0$.

\begin{theorem}
\label{t31}
Let $f\in\mathcal{H}^2$ and $ s\in(-\frac{1}{2},\frac{n}{2}]$. Then the following statements are equivalent:

{\rm(i)} $f\in \mathcal{HC}^{s}$.

{\rm(ii)} $\|f\|_{\mathcal{HC}^{s},*}=\sup\limits_{a\in\mathbb B_n}(1-|a|^{2})^{s}\|f\circ\varphi_a-f(a)\|_{\mathcal{H}^2}<\infty.$

{\rm(iii)} $\|f\|_{\mathcal{HC}^{s},\star}=\sup\limits_{a\in\mathbb B_n}\left((1-|a|^2)^{2s}\int_{\mathbb B_n}|\tilde{\nabla}f(z)|^2G(z,a)d\lambda(z)\right)^\frac12<\infty.$ 

\end{theorem}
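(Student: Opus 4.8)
The plan is to prove the equivalences in the order (ii)$\Leftrightarrow$(iii), (ii)$\Rightarrow$(i), (i)$\Rightarrow$(ii), with essentially all of the work sitting in the last implication. The basic device is the change-of-variable formula on the sphere: the Jacobian of $\varphi_{a}$ on $\mathbb{S}_{n}$ equals $P(a,\cdot)$ (see \cite[Ch.~2]{Rudin}), so that
$$
\|f\circ\varphi_{a}-f(a)\|_{\mathcal{H}^{2}}^{2}=\int_{\mathbb{S}_{n}}|f(\xi)-f(a)|^{2}\,P(a,\xi)\,d\sigma(\xi),
$$
where $P(a,\cdot)\ge 0$ and $\int_{\mathbb{S}_{n}}P(a,\cdot)\,d\sigma=1$; moreover, since $f\circ\varphi_{a}\in\mathcal{H}^{2}$ has mean value $(f\circ\varphi_{a})(0)=f(a)$ over $\mathbb{S}_{n}$, one has $f(a)-c=\int_{\mathbb{S}_{n}}(f(\xi)-c)\,P(a,\xi)\,d\sigma(\xi)$ for every constant $c$. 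I expect the dyadic-decomposition-and-telescoping estimate behind (i)$\Rightarrow$(ii) to be the main obstacle.

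For (ii)$\Leftrightarrow$(iii), fix $a$. The M\"obius invariance of $|\tilde\nabla(\cdot)|$ and of $d\lambda$ together with $G(z,a)=G(\varphi_{a}(z))$ give, under the substitution $w=\varphi_{a}(z)$, that $\int_{\mathbb{B}_{n}}|\tilde\nabla f(z)|^{2}G(z,a)\,d\lambda(z)=\int_{\mathbb{B}_{n}}|\tilde\nabla(f\circ\varphi_{a})(w)|^{2}G(w)\,d\lambda(w)$. For $g\in\mathcal{H}^{2}$ one has $\int_{\mathbb{B}_{n}}|\tilde\nabla g|^{2}G(w)\,d\lambda(w)\approx\int_{\mathbb{B}_{n}}|\tilde\nabla g|^{2}(1-|w|^{2})^{n}\,d\lambda(w)$, because $G(w)\approx(1-|w|^{2})^{n}$ near $\mathbb{S}_{n}$ (indeed $G\gtrsim(1-|w|^{2})^{n}$ throughout $\mathbb{B}_{n}$), while near the origin $G$ is locally integrable and $|\tilde\nabla g|^{2}$ is bounded there by a multiple of $\|g-g(0)\|_{\mathcal{H}^{2}}^{2}$. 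Applying the Littlewood--Paley identity recorded in \S\ref{s2} (see \cite[Lemma 2.10]{OrtegaF}, \cite[Theorem 4.23]{Zhu1}) to $g=f\circ\varphi_{a}$ then gives $\int_{\mathbb{B}_{n}}|\tilde\nabla f(z)|^{2}G(z,a)\,d\lambda(z)\approx\|f\circ\varphi_{a}-f(a)\|_{\mathcal{H}^{2}}^{2}$, and multiplying by $(1-|a|^{2})^{2s}$ and taking the supremum over $a$ yields (ii)$\Leftrightarrow$(iii).

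For (ii)$\Rightarrow$(i), let $Q(\zeta,r)$ be a Carleson ball and put $a=(1-r)\zeta$; then $1-|a|^{2}\approx r$, and from $\langle a,\xi\rangle=(1-r)\langle\zeta,\xi\rangle$ one gets $r=1-|a|\le|1-\langle a,\xi\rangle|<2r$ for $\xi\in Q(\zeta,r)$, hence $P(a,\xi)\approx r^{-n}$ on $Q(\zeta,r)$. Therefore
$$
r^{-n}\int_{Q(\zeta,r)}|f(\xi)-f(a)|^{2}\,d\sigma(\xi)\ \lesssim\ \|f\circ\varphi_{a}-f(a)\|_{\mathcal{H}^{2}}^{2}\ \lesssim\ (1-|a|^{2})^{-2s}\|f\|_{\mathcal{HC}^{s},*}^{2}\ \approx\ r^{-2s}\|f\|_{\mathcal{HC}^{s},*}^{2},
$$
and since $f_{Q(\zeta,r)}$ is the best constant $L^{2}(Q(\zeta,r))$-approximant of $f$, $\int_{Q(\zeta,r)}|f-f_{Q(\zeta,r)}|^{2}\,d\sigma\le\int_{Q(\zeta,r)}|f-f(a)|^{2}\,d\sigma$. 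Combined with $f\in\mathcal{H}^{2}$ this gives $f\in\mathcal{HC}^{s}$ with $\|f\|_{\mathcal{HC}^{s}}\lesssim\|f\|_{\mathcal{H}^{2}}+\|f\|_{\mathcal{HC}^{s},*}$.

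For (i)$\Rightarrow$(ii) --- the crux --- fix $a$; the case $|a|\le\tfrac12$ is immediate since then $P(a,\cdot)$ is bounded and $\|f\circ\varphi_{a}-f(a)\|_{\mathcal{H}^{2}}\lesssim\|f\|_{\mathcal{H}^{2}}$. So let $|a|>\tfrac12$, put $r=1-|a|$, $\zeta=a/|a|$, and write $\mathbb{S}_{n}=Q_{0}\cup\bigcup_{k\ge1}(Q_{k}\setminus Q_{k-1})$ with $Q_{k}=Q(\zeta,2^{k}r)$, the union over the finitely many $k$ with $2^{k-1}r\lesssim 1$ (for the top index $Q_{k}=\mathbb{S}_{n}$). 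On the $k$-th shell, $P(a,\xi)\approx(2^{k}r)^{-2n}r^{n}$ and $\sigma(Q_{k})\approx(2^{k}r)^{n}$, and the defining inequality of $\mathcal{HC}^{s}$ applied on $Q_{k}$ yields
$$
\int_{Q_{k}}|f-f(a)|^{2}\,d\sigma\ \lesssim\ (2^{k}r)^{n-2s}\|f\|_{\mathcal{HC}^{s}}^{2}+(2^{k}r)^{n}\,|f_{Q_{k}}-f(a)|^{2}.
$$
The displacements $f_{Q_{k}}-f(a)$ are controlled by a Campanato-type telescoping: Cauchy--Schwarz with $\sigma(Q_{j})\approx(2^{j}r)^{n}$ and the $\mathcal{HC}^{s}$-bound give $|f_{Q_{j+1}}-f_{Q_{j}}|\lesssim(2^{j}r)^{-s}\|f\|_{\mathcal{HC}^{s}}$, while $|f_{Q_{0}}-f(a)|\lesssim r^{-s}\|f\|_{\mathcal{HC}^{s}}$ follows by running the \emph{same} shell decomposition on $f(a)-f_{Q_{0}}=\int_{\mathbb{S}_{n}}(f(\xi)-f_{Q_{0}})\,P(a,\xi)\,d\sigma(\xi)$. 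Summing the telescoping series gives $|f_{Q_{k}}-f(a)|\lesssim(2^{k}r)^{-s}\|f\|_{\mathcal{HC}^{s}}$ for $s\ne 0$ (and $\lesssim k\,\|f\|_{\mathcal{HC}^{s}}$ for $s=0$); plugging this back and summing $\sum_{k\ge0}\int_{Q_{k}\setminus Q_{k-1}}|f-f(a)|^{2}P(a,\xi)\,d\sigma$ leaves the series $\sum_{k}2^{-k(n+2s)}$, possibly times a polynomial in $k$ when $s=0$ (harmless, since $r^{n}(\log\tfrac1r)^{2}$ is bounded), which converges because $n+2s\ge 1+2s>0$ under the standing hypothesis $s>-\tfrac12$. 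The outcome, $\int_{\mathbb{S}_{n}}|f-f(a)|^{2}P(a,\xi)\,d\sigma(\xi)\lesssim r^{-2s}\|f\|_{\mathcal{HC}^{s}}^{2}$, is precisely $(1-|a|^{2})^{s}\|f\circ\varphi_{a}-f(a)\|_{\mathcal{H}^{2}}\lesssim\|f\|_{\mathcal{HC}^{s}}$. The principal difficulty lies in this last step --- carrying the telescoping/decay estimate uniformly through the regimes $s>0$, $s=0$, $s<0$ while keeping the dyadic sums convergent --- the $G$-versus-$(1-|\varphi_{a}(z)|^{2})^{n}$ point in (ii)$\Leftrightarrow$(iii) being only a minor technicality.
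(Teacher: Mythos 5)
Your proposal is correct, and for the central range $s\in[0,n/2)$ it follows the same route as the paper: dyadic shells $Q_k=Q(a/|a|,2^k(1-|a|))$, the estimate $P(a,\cdot)\approx (2^kr)^{-2n}r^n$ on the $k$-th shell, and Campanato-type telescoping of the averages $f_{Q_{k+1}}-f_{Q_k}$ (the paper uses $4^k$ in place of $2^k$ and sidesteps the bound on $|f(a)-f_{Q_0}|$ via the orthogonality identity $\int_{\mathbb S_n}|f-f_{Q_0}|^2P(a,\cdot)\,d\sigma=\int_{\mathbb S_n}|f-f(a)|^2P(a,\cdot)\,d\sigma+|f(a)-f_{Q_0}|^2$, whereas you estimate $|f(a)-f_{Q_0}|$ by a second shell decomposition --- both work). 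Where you genuinely diverge is that you run this single dyadic argument uniformly over all of $(-\frac12,\frac n2]$, while the paper splits into three cases: for $s\in(-\frac12,0)$ it identifies $\mathcal{HC}^s$ with the Lipschitz space $\mathcal A_{-s}$ and argues through the pointwise gradient bound $\sup_z(1-|z|^2)^s|\tilde\nabla f(z)|<\infty$ combined with the invariant Littlewood--Paley formula and the Forelli--Rudin integral estimates (Lemma \ref{l22}), and it treats $s=\frac n2$ by a direct $\mathcal H^2$ computation. Your unification is legitimate --- for $s<0$ the telescoping sum $\sum_{j<k}(2^jr)^{-s}$ is geometric with ratio $2^{|s|}>1$ and is dominated by its last term, and all the final series converge since $n+2s>0$ --- and it is arguably cleaner in that it never leaves the Campanato definition; what it loses is the explicit link to the gradient characterization of $\mathcal A_{-s}$ that the paper reuses later. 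One small slip: your claimed bound $|f_{Q_k}-f(a)|\lesssim(2^kr)^{-s}\|f\|_{\mathcal{HC}^s}$ is correct only for $s<0$; for $s>0$ the geometric sum is dominated by its \emph{first} term and yields the weaker (and correct) bound $r^{-s}\|f\|_{\mathcal{HC}^s}$ (the paper settles for the even cruder $k\,r^{-s}$). This does not affect your conclusion, since $(2^kr)^nP(a,\cdot)\approx 2^{-kn}$ on the $k$-th shell and $\sum_k 2^{-kn}k^2<\infty$, but the exponent as stated is an overclaim.
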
 

\begin{proof} (i)$\Leftrightarrow$(ii) This is handled in accordance with three cases below.

 {\it Case 1}. $s\in(-\frac{1}{2}, 0)$. If $f\in\mathcal{HC}^{s}=\mathcal A_{-s}$, using \cite[Theorem 7.2]{Zhu1} we then see that $$f\in\mathcal{HC}^{s}\Leftrightarrow \||f\||_{\mathcal{HC}^s,*}=\sup\limits_{a\in\mathbb B_n}(1-|z|^2)^{s}|\tilde{\nabla}f(z)|<\infty.$$
According to those two integral formulas linking $f$ and $\tilde{\nabla}f$ listed in the sixth bullet of \S\ref{s21} and Lemma \ref{l22} or {\cite[Proposition 1.4.10]{Rudin}}, we have
\begin{align*}
&\|f\|_{\mathcal{HC}^{s},*}^2=\sup\limits_{a\in\mathbb B_n}(1-|a|^{2})^{2s}\|f\circ\varphi_a-f(a)\|_{\mathcal{H}^2}^{2}\\&\approx \sup\limits_{a\in\mathbb B_n}(1-|a|^2)^{2s}\int_{\mathbb B_n}|\tilde{\nabla}f(z)|^{2}(1-|\varphi_a(z)|^2)^n d\lambda(z)
\\&\lesssim
\||f\||_{\mathcal{HC}^s,*}^2\sup\limits_{a\in\mathbb B_n}\int_{\mathbb B_n}(\frac{1-|a|^2}{1-|z|^2})^{2s}(1-|\varphi_a(z)|^2)^n d\lambda(z)
\\&\lesssim 
\||f\||_{\mathcal{HC}^s,*}^2\sup\limits_{a\in\mathbb B_n}\int_{\mathbb B_n}\frac{(1-|a|^2)^{n+2s}(1-|z|^2)^{(-1-2s)}}{|1-\langle a, z \rangle|^{n+1+(-1-2s)+n+2s}}d\nu(z)\\&\lesssim \||f\||_{\mathcal{HC}^s,*}^2.
\end{align*}

Conversely, if $\|f\|_{{HC}^{s},*}<\infty$, setting $g=f\circ \varphi_a-f(a)$, then it is easy to show  
\begin{align*}
&|\tilde{\nabla}f(a)|=|\nabla g(0)|\\&\leq \left(\int_{\mathbb S_n}
|g(\zeta)-g(0)|^2d\sigma(\zeta)\right)^{\frac{1}{2}}\\&=
\left(\int_{\mathbb S_n}
|f\circ \varphi_a(\zeta)-f(a)|^2d\sigma(\zeta)\right)^{\frac{1}{2}}\\&=
\|f\circ \varphi_a-f(a)\|_{\mathcal{H}^2}.
\end{align*}
Hence
\begin{eqnarray*}
\sup\limits_{a\in\mathbb B_n}(1-|a|^2)^{s}|\tilde{\nabla}f(a)|\leq\sup\limits_{a\in\mathbb B_n}(1-|a|^2)^{s}\|f\circ \varphi_a-f(a)\|_{\mathcal{H}^2}=\|f\|_{{\mathcal{HC}}^{s},*}<\infty
\end{eqnarray*}
whence $f\in\mathcal{A}_{-s}=\mathcal{HC}^{s}.$

{\it Case 2}. $s\in[0, \frac{n}{2})$. Given $a\in\mathbb B_n$. If $|a|\leq {\frac{3}{4}}$, then $\|f\|_{\mathcal{HC}^{s},*}<\infty$ follows by $f\in\mathcal{H}^2$. Hence we need only consider $a\in\mathbb B_n$ with $|a|> {\frac{3}{4}}$. To this end, let
$$
Q_{k}=Q({a}/{|a|},4^k(1-|a|))=\Big\{\xi\in \mathbb S_n:\,|1-\langle \xi, {a}/{|a|}\rangle|<4^k(1-|a|)\Big\}
$$
for $k=0,1,...,N$, where $N$ is the smallest natural number such that $Q_N=\mathbb S_n.$ When $k\geq1$ and $\zeta\in Q_k\setminus Q_{k-1}$, according to
$$
1-\langle a, \zeta\rangle=1-|a|+|a|(1-\langle a/|a|,\zeta),
$$
we obtain
$$
|1-\langle a, \zeta\rangle|\approx 4^k(1-|a|)\ \ \&\ \
\sigma(Q_{k+1})\approx\sigma(Q_k)\approx 4^{nk}(1-|a|)^{n}.
$$
Since
$$f(a)=\int_{\mathbb S_n}f(\zeta)P(a,\zeta)d\sigma(\zeta)$$
and 
$$
\int_{\mathbb S_n}|f(\zeta)-f_{Q_0}|^{2}P(a,\zeta)d\sigma(\zeta)
=\int_{\mathbb S_n}|f(\zeta)-f(a)|^2P(a,\zeta)d\sigma(\zeta)+|f(a)-f_{Q_0}|^2,
$$
it follows that
\begin{align*}
&\|f\circ\varphi_a-f(a)\|_{\mathcal{H}^2}^2=
\int_{\mathbb Sn}|f\circ\varphi_{a}(\zeta)-f(a)|^2d\sigma(\zeta)\\&=
\int_{\mathbb Sn}|f(\zeta)-f(a)|^2P(a,\zeta)d\sigma(\zeta)\\&\leq\int_{\mathbb S_n}|f(\zeta)-f_{Q_0}|^2P(a,\zeta)d\sigma(\zeta)\\&=\int_{\mathbb S_n}|f(\zeta)-f_{Q_0}|^2\frac{(1-|a|^2)^{n}}{|1-\langle a, \zeta\rangle |^{2n}}d\sigma(\zeta)\\&\lesssim
\left(\int_{Q_0}+\sum\limits_{k=0}^{N-1}\int_{Q_{k+1}\setminus Q_k}\right)
 |f(\zeta)-f_{Q_0}|^2\frac{(1-|a|^2)^{n}}{|1-\langle a, \zeta\rangle |^{2n}}d\sigma(\zeta)  \\&\lesssim
(1-|a|^2)^{-n}\left(\int_{Q_0}+\sum\limits_{k=0}^{N-1}\int_{Q_{k+1}\setminus Q_k}4^{-2nk}\right)|f(\zeta)-f_{Q_0}|^2d\sigma(\zeta)\\&\lesssim
\int_{Q_0}|f(\zeta)-f_{Q_0}|^2\,\frac{d\sigma(\zeta)}{\sigma(Q_0)}+\sum\limits_{k=0}^{N-1}{4^{-nk}}\int_{Q_{k+1}}|f(\zeta)-f_{Q_0}|^2\,\frac{d\sigma(\zeta)}{\sigma(Q_{k+1})}.
\end{align*}
From the Cauchy-Schwarz inequality and $f\in\mathcal{HC}^{s}$ it follows that
\begin{align*}
&|f_{Q_{k+1}}-f_{Q_k}|=\frac{1}{\sigma (Q_k)}\left|\int_{Q_k}(f(\zeta)-f_{Q_{k+1}})d\sigma(\zeta)\right|\\&\lesssim\left(\frac{1}{\sigma (Q_k)}\int_{Q_k}|f(\zeta)-f_{Q_{k+1}}|^2d\sigma(\zeta)\right)^{\frac{1}{2}}\\&\lesssim\left(\frac{1}{\sigma (Q_{k+1})}\int_{Q_{k+1}}|f(\zeta)-f_{Q_{k+1}}|^2d\sigma(\zeta)\right)^{\frac{1}{2}}\\&\lesssim2^{-n(k+1)}(1-|a|)^{-\frac{n}{2}}2^{(k+1)(n-2s)}(1-|a|)^{\frac{n-2s}{2}}\|f\|_{\mathcal{HC}^{s}}\\&\lesssim4^{-(k+1)s}(1-|a|)^{-s}\|f\|_{\mathcal{HC}^{s}}\\&\lesssim(1-|a|)^{-s}\|f\|_{\mathcal{HC}^{s}},
\end{align*}
and so that
\begin{eqnarray*}
|f_{Q_{k+1}}-f_{Q_0}|\lesssim |f_{Q_{k+1}}-f_{Q_k}|+\cdots+|f_{Q_1}-f_{Q_0}|\lesssim k(1-|a|)^{-s}\|f\|_{\mathcal{HC}^{s}}.
\end{eqnarray*}
Consequently,
$$
\int_{Q_{k+1}}|f(\zeta)-f_{Q_{0}}|^2\,\frac{d\sigma(\zeta)}{\sigma(Q_{k+1})}\lesssim\int_{Q_{k+1}}|f(\zeta)-f_{Q_{k+1}}|^2\,\frac{d\sigma(\zeta)}{\sigma(Q_{k+1})}+|f_{Q_{k+1}}-f_{Q_0}|^2\lesssim k^2(1-|a|)^{-2s}\|f\|_{\mathcal{HC}^{s}}^{2}.
$$
This implies
$$\|f\|_{\mathcal{HC}^{s},*}=\sup\limits_{a\in\mathbb B_n}(1-|a|^{2})^{s}\|f\circ\varphi_a-f(a)\|_{\mathcal{H}^2}\lesssim\|f\|_{\mathcal{HC}^{s}}<\infty.
$$
On the other hand, if $\|f\|_{\mathcal{HC}^{s},*}<\infty$, then for any given $(\zeta,r)\in\mathbb{S}_n\times(0,1)$, we take 
$$
a=(1-r)\zeta\ \ \&\quad Q(\zeta,r)=\{\xi\in \mathbb S_n:\,|1-\langle \zeta, \xi\rangle|<r \},
$$  
thereby estimating
\begin{align*}
& r^{2s-n}\int_{Q(\zeta,r)}|f(\xi)-f_{Q(\zeta,r)}|^2\,d\sigma(\xi)\\&\lesssim (1-|a|^2)^{2s-n}\int_{Q(\zeta,r)}|f(\xi)-f(a)|^2\,d\sigma(\xi)\\&\lesssim (1-|a|^2)^{2s}\int_{Q(\zeta,r)}|f(\xi)-f(a)|^2\frac{(1-|a|^2)^n}{|1-\langle a, \xi\rangle|^{2n}}\,d\sigma(\xi)\\&\lesssim (1-|a|^2)^{2s}\int_{\mathbb{S}_n}|f(\xi)-f(a)|^2\frac{(1-|a|^2)^n}{|1-\langle a, \xi\rangle|^{2n}}\,d\sigma(\xi)\\&
\approx(1-|a|^2)^{2s}\int_{\mathbb{S}_n}|f\circ\varphi_a(\xi)-f(a)|^2\,d\sigma(\xi)
\\&\lesssim\|f\|^2_{\mathcal{HC}^{s},*}.
\end{align*}
Thus $\|f\|_{\mathcal{HC}^{s}}<\infty.$

{\it Case 3}: $s=\frac{n}{2}$. If 
$$
\sup\limits_{a\in\mathbb B_n}(1-|a|^{2})^{2s}\|f\circ\varphi_a-f(a)\|_{\mathcal{H}^2}^{2}<\infty,
$$ 
then 
$$
\|f\|_{\mathcal{H}^2}\lesssim|f(0)|+\|f-f(0)\|_{\mathcal{H}^2}<\infty.
$$ 
Conversely, if $f\in\mathcal{H}^2$, then an application of 
$$
\|f\circ\varphi_a-f(a)\|^{2}_{\mathcal{H}^2}\\=\int_{\mathbb S_n}|f\circ\varphi_a(\zeta)-f(a)|^{2}d\sigma(\zeta)\approx \int_{\mathbb B_n}|\tilde{\nabla}(z)|^{2}(1-|\varphi_a(z)|^2)^n d\lambda(z)\ \ \forall\ \ a\in\mathbb B_n
$$
derives
\begin{align*}
&\sup_{a\in\mathbb B_n}(1-|a|^2)^{n}\|f\circ\varphi_a-f(a)\|_{\mathcal{H}^2}^2\\
&\approx\sup_{a\in\mathbb B_n}(1-|a|^2)^{n}\int_{\mathbb B_n}|\tilde{\nabla}f(z)|^{2}(1-|\varphi_a(z)|^2)^n d\lambda(z)\\
&\approx\sup_{a\in\mathbb B_n}\int_{\mathbb B_n}|\tilde{\nabla}f(z)|^{2}(1-|z|^2)^n\frac{(1-|a|^2)^{2n}}{|1-\langle a, z\rangle|^{2n}} d\lambda(z)
\\&\lesssim \int_{\mathbb B_n}|\tilde{\nabla}f(z)|^{2}(1-|z|^2)^n d\lambda(z)\\&\approx
\|f-f(0)\|_{\mathcal{H}^2}^2\lesssim |f(0)|^2+\|f\|_{\mathcal{H}^2}^2<\infty.
\end{align*}

(ii)$\Leftrightarrow$(iii) This is a consequence of the above-verified equivalence (i)$\Leftrightarrow$(ii) and the well-known estimate
$$
\int_{\mathbb B_n}|\tilde{\nabla}f(z)|^2G(z,a)d\lambda(z)\approx\int_{\mathbb S_n}|f\circ\varphi_{a}(\zeta)-f(a)|^2d\sigma(\zeta).
$$
\end{proof}

\subsection{Via four different gradients}\label{s32} Based on Theorem \ref{t31} and the modified Carleson measures, the following assertion reveals four gradient behaviors of $\mathcal{HC}^s$.
\begin{theorem}
\label{t33}
Let $f\in\mathcal{H}^2$ and $s\in(-\frac{1}{2},\frac{n}{2})$. Then the following statements are equivalent:

{\rm(i)} $f\in \mathcal{HC}^{s}.$

{\rm(ii)} ${|\tilde{\nabla}f(z)|}{(1-|z|^2)^{-1}}d\nu(z)$ belongs to $\mathcal{CM}_{1-\frac{2s}{n}}$.

{\rm(iii)} $|{\nabla}f(z)|^2(1-|z|^2)d\nu(z)$ belongs to $\mathcal{CM}_{1-\frac{2s}{n}}$.

{\rm(iv)} $|\mathsf{R}f(z)|^2(1-|z|^2)d\nu(z)$ belongs to $\mathcal{CM}_{1-\frac{2s}{n}}$.

{\rm(v)} $\sum_{i<j}|\mathsf{T}_{i,j}f(z)|^2d\nu(z)$ belongs to $\mathcal{CM}_{1-\frac{2s}{n}}$ where $\mathsf{T}_{i,j}=
\bar{z_j}\frac{\partial f}{\partial z_i}-\bar{z_i}\frac{\partial f}{\partial z_j}.$
\end{theorem}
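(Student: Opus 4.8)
The plan is to establish the chain of equivalences by combining Theorem~\ref{t31} with the gradient identities recorded in \S\ref{s21} and the invariance of modified Carleson measures (Theorem~\ref{l23}); the scheme is (i)$\Leftrightarrow$(ii) first, then (ii)$\Rightarrow$(iii)$\Rightarrow$(iv)$\Rightarrow$(ii), and finally (v) folded in via the operator $\mathsf{T}_{a,b}$. For (i)$\Leftrightarrow$(ii), the natural route is to use the equivalence (i)$\Leftrightarrow$(iii) of Theorem~\ref{t31}, which rewrites membership in $\mathcal{HC}^{s}$ as the boundedness of $\sup_{a}(1-|a|^2)^{2s}\int_{\mathbb B_n}|\tilde\nabla f(z)|^2 G(z,a)\,d\lambda(z)$. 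Since $G(z,a)\approx(1-|\varphi_a(z)|^2)^n$ and $(1-|\varphi_a(z)|^2)^n=\frac{(1-|a|^2)^n(1-|z|^2)^n}{|1-\langle a,z\rangle|^{2n}}$, one sees that $f\in\mathcal{HC}^s$ is equivalent to
$$
\sup_{a\in\mathbb B_n}(1-|a|^2)^{2s+n}\int_{\mathbb B_n}\frac{|\tilde\nabla f(z)|^2(1-|z|^2)^{n-1}}{|1-\langle a,z\rangle|^{2n}}\,d\nu(z)<\infty.
$$
Matching this with Lemma~\ref{l21} applied to $d\mu(z)=|\tilde\nabla f(z)|^2(1-|z|^2)^{-1}\,d\nu(z)$, with the parameters chosen so that $np=2(\tfrac n2-s)=n-2s$, i.e. $p=1-\tfrac{2s}{n}$, and $nq=2s+n$, gives exactly the $\|\mu\|_{\mathcal{CM}_{p,q}}$ condition; hence (i)$\Leftrightarrow$(ii). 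One must check that the admissible range $s\in(-\tfrac12,\tfrac n2)$ keeps $p\in(0,\tfrac{n+1}{n})$ and $q>0$, which it does.

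For the three pointwise gradient conditions, the inequalities from the fifth and sixth bullets of \S\ref{s21},
$$
(1-|z|^2)|\mathsf{R}f(z)|\le(1-|z|^2)|\nabla f(z)|\le|\tilde\nabla f(z)|,
$$
immediately yield (ii)$\Rightarrow$(iii)$\Rightarrow$(iv) at the level of the measures (each dominates the next pointwise). The only substantive implication in this block is (iv)$\Rightarrow$(ii): one must recover control of the full invariant gradient from the radial derivative alone. Here I would use the standard fact that for holomorphic $f$ the tangential part of $\nabla f$ is controlled by an averaged radial derivative over a Bergman-type ball, or more directly invoke a reproducing-formula representation $\mathsf{R}f(z)=\int_{\mathbb B_n}\frac{(1-|w|^2)^{t}\,\mathsf{R}f(w)}{(1-\langle z,w\rangle)^{n+1+t}}\,d\nu(w)$ (up to lower-order terms), differentiate under the integral sign to express $\nabla f$ and hence $\tilde\nabla f$ as an operator of the form $\mathsf{T}_{a,b}$ acting on $\mathsf{R}f$, and then apply Theorem~\ref{l23} with $\eta$ chosen so that $|\mathsf{R}f(z)|^2(1-|z|^2)^{\eta}\,d\nu(z)\in\mathcal{CM}_p$ forces $|\tilde\nabla f(z)|^2(1-|z|^2)^{\eta+2a}\,d\nu(z)\in\mathcal{CM}_p$ with the correct final exponent $-1$. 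The bookkeeping of $a,b,\eta$ against the hypotheses of Theorem~\ref{l23} (namely $p\in(0,\tfrac{n+1}{n})$, $a>\max\{-\tfrac{1+\eta}{2},-\tfrac{1+\eta+n(1-p)}{2}\}$, $b>\tfrac{1+\eta}{2}$) is where the argument is most delicate, and I expect that to be the main obstacle: one needs a choice of parameters that is simultaneously legal for Lemma~\ref{l22}/Schur and lands on exponent $-1$.

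For (v), note that $|\mathsf{T}_{i,j}f(z)|\le 2|\nabla f(z)|$ (indeed $\sum_{i<j}|\mathsf{T}_{i,j}f(z)|^2$ is comparable to $|z|^2|\nabla f(z)|^2-|\mathsf{R}f(z)|^2$, which is $\le|\nabla f(z)|^2$), so (iii)$\Rightarrow$(v) is a pointwise domination after absorbing the harmless factor $(1-|z|^2)$ on compact subsets and handling the region $|z|$ near $1$ where $(1-|z|^2)\le1$. Conversely, combining (iv) and (v): since $(1-|z|^2)^2|\nabla f(z)|^2=(1-|z|^2)^2|\mathsf{R}f(z)|^2+(1-|z|^2)^2\big(|\nabla f(z)|^2-|z|^{-2}|\mathsf{R}f(z)|^2\big)$ up to the factor $|z|^{-2}$ (bounded away from $0$ for $|z|\ge\tfrac12$), one gets $(1-|z|^2)|\nabla f(z)|^2\,d\nu\lesssim(1-|z|^2)|\mathsf{R}f(z)|^2\,d\nu+\sum_{i<j}|\mathsf{T}_{i,j}f(z)|^2\,d\nu$ on $\{|z|\ge\tfrac12\}$, and the ball $\{|z|<\tfrac12\}$ contributes a finite measure that is trivially $p$-Carleson; hence (iv)\,\&\,(v)$\Rightarrow$(iii), closing the loop. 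Assembling (i)$\Leftrightarrow$(ii)$\Leftrightarrow$(iii)$\Leftrightarrow$(iv) and (iii)$\Leftrightarrow$(v) completes the proof.
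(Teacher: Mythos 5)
Your outline of (i)$\Leftrightarrow$(ii) via Theorem~\ref{t31} and Lemma~\ref{l21} (with $np=n-2s$, $nq=n+2s$) matches the paper's Step~1, and (ii)$\Rightarrow$(iii)$\Rightarrow$(iv) by pointwise domination and the closing implication (iv)\,\&\,(v)$\Rightarrow$(iii) via the Lagrange identity $\sum_{i<j}|\mathsf{T}_{i,j}f|^2=|z|^2|\nabla f|^2-|\mathsf{R}f|^2$ are fine. But the two remaining implications --- the only genuinely hard ones --- are not established, and in one case your proposed argument is actually false.

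First, (iii)$\Rightarrow$(v) is \emph{not} a pointwise domination. Condition (iii) controls $|\nabla f|^2(1-|z|^2)\,d\nu$, while (v) asks for $\sum_{i<j}|\mathsf{T}_{i,j}f|^2\,d\nu$ with \emph{no} vanishing weight; the inequality $(1-|z|^2)\le 1$ runs in the wrong direction, since it makes the measure you control \emph{smaller} than the one you need. The whole content of (v) is that tangential derivatives of holomorphic functions gain half a non-isotropic order, so the unweighted measure is still Carleson. The paper obtains this from the representation $\mathsf{T}_{i,j}f(z)=\int_0^1(\mathsf{T}_{i,j}\mathsf{R}f)(tz)\,\frac{dt}{t}$ together with Jevti\'c's estimate
$\int_0^1|(\mathsf{T}_{i,j}\mathsf{R}f)(tz)|\,dt\lesssim\int_{\mathbb B_n}\frac{(1-|w|^2)^\alpha|\mathsf{R}f(w)|}{|1-\langle z,w\rangle|^{n+\alpha+1/2}}\,d\nu(w)$
(note the exponent $n+\alpha+\tfrac12$, half an order less than a full derivative), and then Theorem~\ref{l23} with $\eta=1$, $a=-\tfrac12$, $b=1+\alpha$, so the output weight is $(1-|z|^2)^{2a+\eta}=(1-|z|^2)^0$. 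Without this ingredient (v) is unreachable from (iii) or (iv).

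Second, your route (iv)$\Rightarrow$(ii) is blocked by the very parameter constraints you flag as ``delicate bookkeeping.'' The input of (iv) forces $\eta=1$ in Theorem~\ref{l23}, and landing on the weight $(1-|z|^2)^{-1}$ of (ii) requires $2a+\eta=-1$, i.e.\ $a=-1$, which violates the hypothesis $a>-\tfrac{1+\eta}{2}=-1$. So the direct $\mathsf{T}_{a,b}$ argument cannot reach (ii); the detour through the tangential operators is essential. The paper closes the loop instead as (iv)$\Rightarrow$(v)$\Rightarrow$(i), the last step using the identity $|z|^2|\tilde\nabla f(z)|^2=(1-|z|^2)\bigl((1-|z|^2)|\mathsf{R}f(z)|^2+\sum_{i<j}|\mathsf{T}_{i,j}f(z)|^2\bigr)$ together with \cite[Lemma 2]{Hu2} or \cite[Lemma 2.3]{Jev2} to recover the radial contribution from the tangential one. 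You should restructure your argument around the Jevti\'c estimate and this identity rather than around a direct (iv)$\Rightarrow$(ii).
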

\begin{proof} The argument is divided into four steps below.

\par{\it Step 1}: Note that
\begin{align*}
&(1-|a|^2)^{2s}\|f\circ\varphi_a-f(a)\|_{\mathcal{H}^2}^{2}\\
&\approx(1-|a|^2)^{2s}\int_{\mathbb B_n}|\tilde{\nabla}f(z)|^{2}(1-|\phi_a(z)|^2)^n d\lambda(z)\\&\approx(1-|a|^2)^{2s} \int_{\mathbb B_n}\Big(\frac{|\tilde{\nabla}f(z)|^{2}}{(1-|z|^2)^{n+1}}\Big)\Big(\frac{(1-|a|^2)^{n}(1-|z|^2)^n}{|1-\langle a, z\rangle|^{2n}}\Big)\, d\nu(z)
\\&\approx\int_{\mathbb B_n}\Big(\frac{|\tilde{\nabla}f(z)|^{2}}{1-|z|^2}\Big)\Big(\frac{(1-|a|^2)^{n+2s}}{|1-\langle a, z\rangle|^{2n}}\Big)\, d\nu(z)
\\&\approx\int_{\mathbb B_n}\Big(\frac{|\tilde{\nabla}f(z)|^{2}}{1-|z|^2}\Big)\Big(\frac{(1-|a|^2)^{n(1+\frac{2s}{n})}}{|1-\langle a, z\rangle|^{n((1+\frac{2s}{n})+(1-\frac{2s}{n}))}}\Big)\, d\nu(z).
 \end{align*}
An application of Theorem \ref{t31} implies that (i) and (ii) are equivalent.\\

{\it Step 2}: It is easy to see that both (ii)$\Rightarrow$(iii) and (iii)$\Rightarrow$ (iv) hold.

{\it Step 3}: Assume (iv) hods. Notice that 
$$f(z)-f(0)=\int_{0}^{1}\frac{d f(tz)}{dt}dt=\int_{0}^{1}\frac{\mathsf{R} f(tz)}{t}dt.$$
Hence 
$$\mathsf{T}_{i,j}f(z)=\int_{0}^{1}\frac{(\mathsf{T}_{i,j}\mathsf{R} f)(tz)}{t}dt.$$
To prove that $\sum_{i<j}|\mathsf{T}_{i,j}f(z)|^2d\nu(z)$ belongs to $\mathcal{CM}_{1-\frac{2s}{n}}$, it is sufficient to prove 
$$
\left(\int_{\frac{1}{2}}^{1}|(\mathsf{T}_{i,j}\mathsf{R} f)(tz)|dt\right)^2d\lambda(z)\in\mathcal{CM}_{1-\frac{2s}{n}}\ \ \forall\ \ 1\leq i,j\leq n.
$$ 
For any $\alpha>0$, according to \cite[p.3374]{Jev2}, we get 
\begin{eqnarray*}
\int_{\frac{1}{2}}^{1}|(\mathsf{T}_{i,j}\mathsf{R} f)(tz)|dt\leq\int_{0}^{1}|(\mathsf{T}_{i,j}\mathsf{R} f)(tz)|dt\lesssim \int_{\mathbb B_n}\frac{(1-|z|^2)^{\alpha}|(\mathsf{R} f)(z)|}{|1-\langle \omega, z\rangle|^{n+\alpha+\frac{1}{2}}}d\nu(z). 
\end{eqnarray*}
Theorem \ref{l23} is used to show that 
$$\left(\int_{\frac{1}{2}}^{1}|(\mathsf{T}_{i,j}\mathsf{R} f)(tz)|dt\right)^2d\lambda(z)$$
belongs to $\mathcal{CM}_{1-\frac{2s}{n}}$ 
by taking $b=1+\alpha$, $\eta=1$ and $a=-\frac{1}{2}.$
This in turns proves that $$\sum_{i<j}|\mathsf{T}_{i,j}f(z)|^2d\nu(z)$$
belongs to $\mathcal{CM}_{1-\frac{2s}{n}}$. Thus (v) holds.

{\it Step 4}: Now, if (v) holds, then applying \cite[Lemma 2]{Hu2} or \cite[Lemma 2.3]{Jev2}, and the equality in \cite[Lemma 2.2]{Jev2}:
$$
|z|^2|\tilde{\nabla}f(z)|^2=(1-|z|^2)\left((1-|z|^2)|\mathsf {R}f(z)|^2+\sum_{i<j}|\mathsf{T}_{i,j}f(z)|^2 \right),
$$
we have that (i) holds.

\end{proof}

\section{M\"obius invariant counterpart}\label{s4}

\setcounter{equation}{0}
\subsection{Relationship to $\mathcal{Q}_{p}$}\label{s41} Observe that Theorem \ref{t31} tells us that $\|\cdot\|_{\mathcal{HC}^s}$ varies under the M\"obius self-mappings of $\mathbb B_n$ unless $s=0$. So, it is very natural to ask the following question:

\medskip
{\it What is the M\"obius invariant counterpart of $\mathcal{HC}^s$ for each $s\not=0$\ ?}

\medskip

Fortunately, answering this question leads to a consideration of the so-called holomorphic $\mathcal{Q}_p$-spaces with $p>0$ (cf. \cite{OuYZ}):

$$
\mathcal{Q}_p=\left\{f\in \mathcal{H}(\mathbb B_n):\,\,\|f\|_{\mathcal{Q}_p}=\sup\limits_{\omega\in\mathbb B_n}\left({\int_{\mathbb B_n}|{\nabla}f(z)|^2 G^{p}(z,\omega)d\nu(z)}\right)^\frac12<\infty\right\},
$$
whose one-dimensional case goes back to \cite{AXZ} (cf. \cite{EssenX, Xiao1, Xiao2}). Clearly, each $\mathcal{Q}_p$ is M\"obius invariant. More importantly, $\mathcal{Q}_p$ enjoys the following structure table:

\bigskip

\begin{center}
    \begin{tabular}{ | l | p{8cm} |}
    \hline
    Index $p$ & Space $\mathcal{Q}_p$  \\ \hline
     $0<p\leq \frac{n-1}{n}$ & $\mathbb C$ \\
      \hline
     $\frac{n-1}{n}<p<1$ & $\mathcal{Q}_p$ with fractional scale \\ \hline
     $p=1$ & Analytic John-Nirenberg space $\mathcal{BMOA}$\ \cite{CRW}\\  \hline
     $1<p<\frac{n}{n-1}$ & Bloch space\ \cite{OuYZ} \\ \hline 
     $p\geq \frac{n}{n-1}$ &  $\mathbb C$ \\ \hline
\end{tabular}
\end{center}
\bigskip

Comparing this table and the structure table of $\mathcal{HC}^s$, and taking \cite{WuX} (for $n=1$) into an account, we ask ourselves whether there is any connection between $\mathcal{HC}^s$ and $\mathcal{Q}_p$ under $n\ge 2$. To treat this question, let us recall the concept of the fractional derivatives.

 Suppose that
 $f\in\mathcal{H}(\mathbb B_n)$ has the homogeneous expansion 
 $$
 f(z)=\sum\limits_{k=0}^{\infty}f_k(z).
 $$ 
 For such real parameters $\alpha$ and $t$ that neither $n+\alpha$ nor $n+\alpha+t$ is a negative integer, the radial fractional derivative is defined below:
\begin{align*}
\mathsf{R}^{\alpha,t} f(z)=\sum_{k=0}^{\infty}\frac{\Gamma(n+1+\alpha)\Gamma(n+1+k+\alpha+t)}{\Gamma(n+1+\alpha+t)\Gamma(n+1+k+\alpha)}f_k(z),
\end{align*}
where $\Gamma(\cdot)$ is the classical gamma function. Then, $\mathsf{R}^{\alpha,t}$ exists as an operator mapping $\mathcal{H}(\mathbb B_n)$ to $\mathcal{H}(\mathbb B_n)$, and its inverse $\mathsf{R}_{\alpha,t}$ is determined by:
$$ 
\mathsf{R}_{\alpha,t} f(z)=\sum_{k=0}^{\infty}\frac{\Gamma(n+1+\alpha+t)\Gamma(n+1+k+\alpha)}{\Gamma(n+1+\alpha)\Gamma(n+1+k+\alpha+t)}f_k(z).
$$ 
In particular, 
$$
\mathsf{R}^{\alpha,0}\equiv\mathsf{I}\quad\&\quad \mathsf{R}^{\alpha,t_1}\circ\mathsf{R}^{\alpha+t_1,t_2}=\mathsf{R}^{\alpha,t_1+t_2}.
$$
An application of Stirling's formula yields that
$$
\frac{\Gamma(n+1+\alpha)\Gamma(n+1+k+\alpha+t)}{\Gamma(n+1+\alpha+t)\Gamma(n+1+k+\alpha)}\approx k^t,
$$
and so that
$$
\mathsf{R}^{\alpha,t} f(z)\quad\&\quad \mathsf{R}_{\alpha,t} f(z)
$$
can be replaced by
$$
 \sum_{k=0}^{\infty}k^tf_k(z)\quad\&\quad
  \sum_{k=0}^{\infty}k^{-t}f_k(z),
  $$
  respectively.

\begin{theorem}
\label{t41}
Let $s\in(-\frac{1}{2},\frac{1}{2})$, $t\in(0,\infty)$ and $f\in\mathcal{H}(\mathbb B_n)$. If neither $n+\alpha$ nor $n+\alpha+t$ is a negative integer, then the following three statements are equivalent:

{\rm(i)} $f\in \mathcal{HC}^s$.

{\rm (ii)} For any $t>\max\{0,-s\}$, the measure $|\mathsf{R}^{\alpha,t} f(z)|^{2}(1-|z|^2)^{2t-1}d\nu(z)$ belongs to $\mathcal{CM}_{1-\frac{2s}{n}}. $

{\rm (iii)} $\mathsf{R}^{\alpha,-s} f \in\mathcal{Q}_{1-\frac{2s}{n}}$.
\end{theorem}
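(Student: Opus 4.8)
\emph{Strategy.} The plan is to phrase all three conditions as statements about modified Carleson measures built from radial fractional derivatives of $f$, and to move between different orders by the transfer principle of Theorem \ref{l23}. Two tools will be used constantly: the exact Bergman--kernel shift $\mathsf R^{\beta,c}\big[(1-\langle z,\omega\rangle)^{-(n+1+\beta)}\big]=(1-\langle z,\omega\rangle)^{-(n+1+\beta+c)}$, which together with a weighted Bergman reproducing formula (valid once $\beta$ is large) lets me write a fractional derivative as a genuine $\mathsf T_{a,b}$, namely $\mathsf R^{\beta,c}h=c_\beta\,\mathsf T_{c,\beta+1}h$ when $h$ is reproduced by the $\beta$--Bergman kernel; and the equivalence $f\in\mathcal{HC}^s\Leftrightarrow|\mathsf Rf(z)|^2(1-|z|^2)\,d\nu(z)\in\mathcal{CM}_{1-2s/n}$ from Theorem \ref{t33}. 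I will freely enlarge $\alpha$ (different admissible $\alpha$ give operators $\mathsf R^{\alpha,t}$ differing by a bounded invertible map comparable to the identity). I record once that $s\in(-\tfrac12,\tfrac12)$ is exactly what places $p:=1-\tfrac{2s}{n}$ in $\big(\tfrac{n-1}{n},\tfrac{n+1}{n}\big)$, i.e. both inside the non--trivial range of $\mathcal Q_p$ and inside the admissibility range $p\in(0,\tfrac{n+1}{n})$ of Theorem \ref{l23}, and that the admissibility inequality $a>-\tfrac{1+\eta}{2}$ for a shift with input weight $\eta=1$ degenerates to the endpoint exactly at $t=\max\{0,-s\}$ --- which is why that threshold appears in (ii).

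\emph{Step 1: {\rm (i)}$\Leftrightarrow${\rm (ii)}.} Assuming (i), Theorem \ref{t33} gives $|\mathsf Rf|^2(1-|z|^2)d\nu\in\mathcal{CM}_{1-2s/n}$; a sub--mean--value estimate over Bergman balls promotes this to $|\mathsf Rf(z)|\lesssim(1-|z|^2)^{-s-1}$, and one further (fractional) integration yields $|f(z)|\lesssim(1-|z|^2)^{-s}$ for $s>0$ (a logarithm when $s=0$, bounded when $s<0$), whence $\int_{Q_r(\zeta)}|f|^2(1-|z|^2)d\nu\lesssim r^{\,n-2s}$ --- here $s<\tfrac12$ is used --- so $|f|^2(1-|z|^2)d\nu\in\mathcal{CM}_{1-2s/n}$ as well. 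Since $\mathsf R^{\alpha,1}f=f+(n+1+\alpha)^{-1}\mathsf Rf$, this gives (ii) at $t=1$. For arbitrary $t>\max\{0,-s\}$ I then write $\mathsf R^{\alpha,t}f=\mathsf R^{\alpha+1,\,t-1}\big(\mathsf R^{\alpha,1}f\big)=c\,\mathsf T_{t-1,\,\alpha+2}\big(\mathsf R^{\alpha,1}f\big)$ and apply Theorem \ref{l23} with $p=1-\tfrac{2s}{n}$, $\eta=1$, $a=t-1$, $b=\alpha+2$: the constraints $b>\tfrac{1+\eta}{2}$ and $a>\max\{-1,-1-s\}$ are just $\alpha>-1$ and $t>\max\{0,-s\}$, and the output weight is $2(t-1)+1=2t-1$, precisely (ii). Conversely, assuming (ii), use $t=1$ (legitimate as $s>-\tfrac12$ forces $1>\max\{0,-s\}$); the same sub--mean--value plus integration argument applied to $\mathsf R^{\alpha,1}f$ gives $|f|^2(1-|z|^2)d\nu\in\mathcal{CM}_{1-2s/n}$, hence $|\mathsf Rf|^2(1-|z|^2)d\nu=(n+1+\alpha)^2|\mathsf R^{\alpha,1}f-f|^2(1-|z|^2)d\nu\in\mathcal{CM}_{1-2s/n}$, and $f\in\mathcal{HC}^s$ by Theorem \ref{t33}.

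\emph{Step 2: {\rm (ii)}$\Leftrightarrow${\rm (iii)}.} Put $g=\mathsf R^{\alpha,-s}f$ and $p=1-\tfrac{2s}{n}$. I intend to use the fractional--order description of $\mathcal Q_p$: for every order $m$ with $2m-n-1+np>-1$ (equivalently $m>\max\{s,0\}$), $g\in\mathcal Q_p$ iff $\sup_\omega\int_{\mathbb B_n}|\mathsf R^{\gamma,m}g(z)|^2(1-|z|^2)^{2m}G^p(z,\omega)\,d\lambda(z)<\infty$ --- classical for $n=1$ and in general obtainable from Theorem \ref{l23} by the mechanism of Step 1. Replacing $G^p(z,\omega)$ by $(1-|\varphi_\omega(z)|^2)^{np}$ (the local singularity of $G^p$ at $z=\omega$ is integrable for $p$ in the present range, so this costs only a bounded error), using $d\lambda=(1-|z|^2)^{-n-1}d\nu$, and applying Lemma \ref{l21} with $q=p$, this becomes $|\mathsf R^{\gamma,m}g|^2(1-|z|^2)^{2m-n-1+np}d\nu\in\mathcal{CM}_p$. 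Choosing $\gamma=\alpha-s$, the composition rule gives $\mathsf R^{\alpha-s,m}g=\mathsf R^{\alpha,\,m-s}f$; setting $t=m-s$, the admissibility $m>\max\{s,0\}$ becomes exactly $t>\max\{0,-s\}$ while $2m-n-1+np=2(m-s)-1=2t-1$ after substituting $np=n-2s$. Hence $g\in\mathcal Q_{1-2s/n}$ is equivalent to the full assertion of (ii), and with Step 1 all three statements are equivalent.

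\emph{Main obstacle.} The substantive difficulty is the parameter bookkeeping: checking at each use of Theorem \ref{l23} that the Bergman--kernel shift really produces a $\mathsf T_{a,b}$ with $(a,b)$ in the admissible region, and --- the more delicate point --- verifying that the two weights that arise, $(1-|z|^2)^{2t-1}$ on the fractional--derivative side and $(1-|z|^2)^{2m-n-1+np}$ from unwinding the $\mathcal Q_p$ norm, coincide after the order shift $t=m-s$ and the substitution $np=n-2s$. It is this coincidence, together with the fact that both the admissibility threshold of Theorem \ref{l23} (with input weight $1$) and the lower endpoint of the admissible orders for the $\mathcal Q_p$ description sit exactly at $t=\max\{0,-s\}$, that forces the hypotheses $s\in(-\tfrac12,\tfrac12)$ and $t>\max\{0,-s\}$. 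The remaining ingredients --- the sub--mean--value growth estimates, the integrability of the singularity of $G^p$, and the $\alpha$--independence --- are routine.
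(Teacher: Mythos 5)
Your proposal is correct and follows the same overall architecture as the paper's proof: both reduce (i) to the Carleson-measure statement for $|\mathsf{R}f|^2(1-|z|^2)d\nu$ via Theorem~\ref{t33}, both move between fractional orders with the transfer principle of Theorem~\ref{l23} using the same parameters $\eta=1$, $a=t-1$, $p=1-\tfrac{2s}{n}$, and both close the loop with (iii) through a Carleson-measure description of $\mathcal{Q}_p$ in terms of a (fractional) radial derivative, matching the weights via $2(1-s)-1=1+n(p-1)$. The differences are worth recording. First, for (i)$\Rightarrow$(ii) the paper quotes the pointwise bound $|\mathsf{R}^{\alpha,t}f|\lesssim \mathsf{T}_{t-1,b}(|\mathsf{R}f|)$ from \cite{BeaB,Jev1} only for $t\in(0,1)$ and defers $t>1$ to ``a slight modification'' of \cite{Jev1}; your route --- establish $t=1$ directly from $\mathsf{R}^{\alpha,1}f=f+(n+1+\alpha)^{-1}\mathsf{R}f$ and then realize $\mathsf{R}^{\alpha,t}f=\mathsf{R}^{\alpha+1,t-1}(\mathsf{R}^{\alpha,1}f)$ as a genuine $\mathsf{T}_{t-1,\alpha+2}$ via the weighted reproducing formula and Lemma~\ref{t45} --- handles all $t>\max\{0,-s\}$ in one stroke and makes transparent why that threshold is exactly the admissibility boundary $a>-\tfrac{1+\eta}{2}$, $a>-\tfrac{1+\eta+n(1-p)}{2}$ of Theorem~\ref{l23}. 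Second, the paper declares (ii)$\Rightarrow$(i) ``trivial,'' although (ii) at $t=1$ controls $\mathsf{R}^{\alpha,1}f$ rather than $\mathsf{R}f$; your explicit detour through the growth estimates and the membership of $|f|^2(1-|z|^2)d\nu$ in $\mathcal{CM}_{1-\frac{2s}{n}}$ genuinely fills that small gap. Third, for (ii)$\Leftrightarrow$(iii) the paper leans on \cite[Corollary 3.2]{LiO}, while you assert the corresponding fractional-order $\mathcal{Q}_p$ characterization and sketch how to regenerate it from Theorem~\ref{l23}; since only one order (say $m=1$, i.e.\ $t=1-s$) is actually needed to link (ii) with (iii), this reliance is no heavier than the paper's, though if you insist on deriving the characterization for all orders internally you should watch that the input weight $\eta$ stays inside $(0,\tfrac{n+1}{n})$, which can fail for $n\ge 2$ and $s$ near $-\tfrac12$. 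Net effect: same proof, slightly more self-contained and more careful at the two places the paper is terse.
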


\begin{proof} The argument is divided into the following two steps.

{\it Step 1}: (i)$\Leftrightarrow$(ii). According to 
Theorem \ref{t33}, one has
$$
f\in \mathcal{HC}^s\Leftrightarrow |\mathsf{R} f(z)|^{2}(1-|z|^2)d\nu(z)\in\mathcal{CM}_{1-\frac{2s}{n}}.
$$ 
This means that (ii)$\Rightarrow$(i) is trivial. Conversely, suppose that (i) is valid. Then we use \cite {BeaB} and \cite {Jev1} to get
$$
|\mathsf{R}^{\alpha,t} f(z)|\lesssim
\int_{\mathbb B_n}\frac{|\mathsf{R} f(\omega)|(1-|\omega|^2)^{b-1}}{|1-\langle z, \omega\rangle|^{n+b+(t-1)}}d\nu(\omega)\quad\hbox{as}\quad (t,b)\in (0,1)\times (0,\infty).
$$
In view of Theorem \ref{l23}, it follows that 
$$|\mathsf{R}^{\alpha,t} f(z)|^2(1-|z|^2)^{2t-1}d\nu(z)\in\mathcal{CM}_{1-\frac{2s}{n}}
$$ 
by setting $\eta=1$, $a=t-1$ and $p=1-\frac{2s}{n}$. When $t>1$, a slight modification of the proof of \cite [Theorem 1] {Jev1} derives that 
$$
|\mathsf{R}^{\alpha,t} f(z)|^2(1-|z|^2)^{2t-1}d\nu(z)\in\mathcal{CM}_{1-\frac{2s}{n}}.
$$ 
Thus (ii) holds.

{\it Step 2}: (i)$\Leftrightarrow$(iii). {\it Step 1} indicates 
$$
f\in\mathcal{HC}^s\Leftrightarrow
|\mathsf{R}^{\alpha,t}f(z)|^2(1-|z|^2)^{2t-1}d\nu(z)\in\mathcal{CM}_{1-\frac{2s}{n}}.
$$
Taking $t=1-s$ we obtain 
$$
2t-1=1-2s=1+n[(1-\frac{2s}{n})-1].
$$
By applying \cite[Corollary 3.2]{LiO} to $p=1-\frac{2s}{n}$ and $m=1$, we can show 
$$
f\in\mathcal{HC}^s\Leftrightarrow\mathsf{R}^{\alpha,-s}f\in\mathcal{Q}_{1-\frac{2s}{n}}.
$$ 
Thus (iii)$\Rightarrow$(i) follows.
 
Conversely, suppose that (i) is true. Thanks to (i)$\Leftrightarrow$(ii) we get 
$$
\mathsf{R}^{\alpha,s} f\in\mathcal{HC}^s\Leftrightarrow 
 |\mathsf{R}^{\alpha,t}\circ\mathsf{R}^{\alpha+t,s} f(z)|^2(1-|z|^2)^{2t-1}d\nu(z)
\in\mathcal{CM}_{1-\frac{2s}{n}}.
$$
Note that 
$$
\mathsf{R}^{\alpha,t}\circ\mathsf{R}^{\alpha+t,s}=
\mathsf{R}^{\alpha,t+s}.
$$
So, setting $t=1-s$, along with \cite[Corollary 3.2]{LiO}, we have $f\in\mathcal{Q}_{1-\frac{2s}{n}}$, whence validating (iii).
\end{proof}

Remarkably, $\mathcal{HC}^s$ is isomorphic to $\mathcal {Q}_{1-\frac{2s}{n}}$ via a fractional differentiation:
$$
\mathcal{HC}^s=
\begin{cases}
\Lambda_{-s} \cong \mathcal{B}\,\,\hbox{as}\ \ -\frac{1}{2}<s<0;\\
\mathcal{BMOA}\cong \mathcal{BMOA}\,\,\hbox{as}\ \ s=0;\\
\mathcal{HM}^{s}\cong \mathcal{Q}_{1-\frac{2s}{n}}\,\,\hbox{as}\ \ 0<s<\frac{1}{2},
\end{cases}
$$
where $\cong$ stands for isomorphism. As a simple application of Theorem \ref{t41}, we obtain the structure assertion on $\mathcal{Q}_p$ below.

\begin{corollary}\label{t42} Let $s\in(-\frac{1}{2},\frac{1}{2})$ and $f\in\mathcal{H}(\mathbb B_n)$. Then 
the following statements are equivalent:

{\rm(i)} $f\in\mathcal{Q}_{1-\frac{2s}{n}}$.

{\rm (ii)} For any $t>\max\{0,-s\}$, $|\mathsf{R}^{\alpha,t} f(z)|^{2}(1-|z|^2)^{2t-2s-1}d\nu(z)$ belongs to $\mathcal{CM}_{1-\frac{2s}{n}}$.

{\rm (iii)} $\mathsf{R}^{\alpha,s} f\in\mathcal{HC}^s$.
\end{corollary}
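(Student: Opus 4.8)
The plan is to obtain Corollary \ref{t42} directly from Theorem \ref{t41} by substitution. Put $p=1-\frac{2s}{n}$ and apply Theorem \ref{t41} not to $f$ but to the holomorphic function $g=\mathsf{R}^{\alpha,s}f$ (legitimate for generic $\alpha$). For this $g$ the three equivalent statements of Theorem \ref{t41} read: $g\in\mathcal{HC}^{s}$; for every admissible $t$ the measure $|\mathsf{R}^{\alpha,t}g(z)|^{2}(1-|z|^{2})^{2t-1}\,d\nu(z)$ belongs to $\mathcal{CM}_{p}$; and $\mathsf{R}^{\alpha,-s}g\in\mathcal{Q}_{p}$. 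The first of these is literally statement (iii) of the corollary, so it remains only to rewrite the other two in terms of $f$.

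The rewriting uses two facts about the radial fractional derivatives. First, combining the composition law $\mathsf{R}^{\alpha,t_{1}}\circ\mathsf{R}^{\alpha+t_{1},t_{2}}=\mathsf{R}^{\alpha,t_{1}+t_{2}}$ with Stirling's formula applied to the explicit $\Gamma$-quotients (exactly as in the discussion preceding Theorem \ref{t41}), one sees that $\mathsf{R}^{\alpha,-s}\mathsf{R}^{\alpha,s}$ and $\mathsf{R}^{\alpha,t}\mathsf{R}^{\alpha,s}$ differ from the identity and from $\mathsf{R}^{\alpha,t+s}$, respectively, only by an ``order-zero'' multiplier $\sum_{k}f_{k}\mapsto\sum_{k}c_{k}f_{k}$ with $c_{k}\approx 1$. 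Second, such a multiplier leaves invariant the classes $\mathcal{HC}^{s}$, $\mathcal{Q}_{p}$, and the class $\mathcal{CM}_{p}$ (for $\mathcal{HC}^{s}$ and the Carleson classes this can be read off from Theorem \ref{t33} and the integral representations used in its proof; for $\mathcal{Q}_{p}$ it is built into the fractional-derivative characterization \cite[Corollary 3.2]{LiO}). Granting these, $\mathsf{R}^{\alpha,-s}g=\mathsf{R}^{\alpha,-s}\mathsf{R}^{\alpha,s}f$ lies in $\mathcal{Q}_{p}$ if and only if $f\in\mathcal{Q}_{p}$, which is (i)$\Leftrightarrow$(iii); and $\mathsf{R}^{\alpha,t}g=\mathsf{R}^{\alpha,t}\mathsf{R}^{\alpha,s}f$ lies in $\mathcal{CM}_{p}$ exactly when $\mathsf{R}^{\alpha,t+s}f$ does, so the change of parameter $t\mapsto t-s$ — which turns the weight exponent $2t-1$ into $2t-2s-1$ — converts the middle statement into condition (ii). This gives (i)$\Leftrightarrow$(ii) and completes the scheme.

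The purely computational ingredients, the $\Gamma$-quotient asymptotics and the bookkeeping of exponents, are routine. The step deserving genuine care is the invariance of $\mathcal{Q}_{p}$, $\mathcal{HC}^{s}$ and $\mathcal{CM}_{p}$ under an order-zero multiplier, together with the parallel matter that the change of parameter relates the admissible range of $t$ in Theorem \ref{t41} to the range displayed in the corollary. It is worth emphasizing that, since statement (i) is a single condition, it is enough to verify (ii) for one value of $t$ large enough for Theorem \ref{l23} to apply (with $\eta=1$ and $a=t-1$ there); the equivalence with (i) then automatically propagates the property to every admissible $t$, so the precise lower endpoint plays no essential role. Alternatively, one could bypass the substitution and re-run the two-step proof of Theorem \ref{t41} with $\mathcal{Q}_{p}$ taking over the intermediate role of $\mathcal{HC}^{s}$, invoking \cite[Corollary 3.2]{LiO} at the analogue of its Step 2.
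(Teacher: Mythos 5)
Your proposal is correct and coincides with the paper's approach: the paper's entire proof of Corollary~\ref{t42} is the single sentence that the argument ``follows from Theorem~\ref{t41}'', and your substitution $g=\mathsf{R}^{\alpha,s}f$ --- modulo the order-zero multiplier caused by the mismatch of base parameters, which the paper itself glosses over via its ``can be replaced by $\sum_k k^t f_k$'' convention --- is evidently what the authors intend, with your remarks on the multiplier invariance and the $t$-range supplying exactly the details the paper omits. One point worth recording: the change of variable $t\mapsto t-s$ yields statement (ii) for $t>\max\{0,s\}$ rather than for the displayed range $t>\max\{0,-s\}$; since for $s>0$ and $t\in(0,s]$ the weight exponent $2t-2s-1$ is $\le -1$ (so the measure in (ii) cannot be a Carleson measure for, say, $f(z)=z_1$), the range printed in the corollary appears to be carried over from Theorem~\ref{t41} by mistake, and the range your substitution produces is the correct one.
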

\begin{proof}
The argument is similar to follow from Theorem \ref{t41}.
\end{proof}

\subsection{Atomic decompositon}\label{s42} Atomic decompositions for the Bloch space, $\mathcal{BMOA}$ and $\mathcal{Q}_p$ (with fractional order $p$) have been obtained in \cite{Rochberg}, \cite{RochbergS} and \cite{WuX2, PengOu1}, respectively. This observation plus Corollary \ref{t42} suggests us to find an atomic decomposition for $\mathcal{HC}^s$. To do so, let
$$
\beta(z,\omega)=\frac{1}{2}\log\frac{1+|\varphi_{z}(\omega)|}{1-|\varphi_{z}(\omega)|}
$$ 
be the Bergman distance between $z$ and $\omega$ in $\mathbb B_n$, where 
$$
\varphi_z\in\mathsf{Aut(\mathbb B_n)}\ \ \hbox{with}\ \  \varphi_z(0)=z\ \ \&\ \ \varphi_z(z)=0.
$$
Suppose $r>0$ and $z\in\mathbb B_n$, the set 
$$
\mathbb{B}(z,r)=\{\omega\in\mathbb B_n:
\,\beta(z,\omega)<r\}
$$
is called a Bergman metric ball centered at $z$. Given an $N\in\mathbb Z^+$. A sequence $\{a_k\}\subset \mathbb B_n$ is called an $r$-lattice in the Bergman metric if $\{a_k\}$ has the following properties: 
\begin{itemize}
\item $\mathbb {B}_n=\cup_{k}\mathbb{B}(a_k,r)$.\\

\item The sets $\mathbb{B}(a_k,r/4)$ are mutually disjoint.\\

\item Each point $z\in\mathbb B_n$ belongs to at most $N$ of the sets $\mathbb{B}(a_k,r/4).$ \\

\item $\beta (a_i, a_j)\geq \frac{r}{2}$ for all $i\neq j.$
\end{itemize}

By \cite[Theorem 2.23]{Zhu1}, we can select an $N\in\mathbb Z^+$ such that for any $1<r\leq 1$ there is an $r$-lattice $\{a_k\}\subset\mathbb B_n$. For $a\in\mathbb B_n$ we write $\delta_{a}$ to denote the unit point-mass at the point $a$. 

\begin{lemma}
\label{t44}
Assume $ s\in(-\frac{1}{2},\frac{1}{2})$ and $b>n-s$.

{\rm (i)} Let $\{a_k\}$ be an $r$-lattice as described above. If $\{c_k\}$ is such a complex sequence that
$$
\sum\limits_{k=1}^{\infty}|c_k|^2(1-|a_k|^2)^{n-2s}\delta_{ a_k}
$$
belongs to $\mathcal{CM}_{1-\frac{2s}{n}}$, then 
$$
f(z)=\sum\limits_{k=1}^{\infty}c_k\frac{(1-|a_k|^2)^{b}}{(1-\langle z, a_k\rangle)^b}
$$
is an element of $\mathcal{Q}_{1-\frac{2s}{n}}$.

{\rm (ii)} If $f\in\mathcal{Q}_{1-\frac{2s}{n}}$, then there exists an $r$-lattice $\{a_k\}\subset\mathbb B_n$ (as described above) and a complex sequence $\{c_k\}$ such that 
$$
f(z)=\sum\limits_{k=1}^{\infty}c_k\frac{(1-|a_k|^2)^{b}}{(1-\langle z, a_k\rangle)^b},
$$
and
$$
\sum\limits_{k=1}^{\infty}|c_k|^2(1-|a_k|^2)^{n-2s}\delta_{ a_k}
$$
belongs to $\mathcal{CM}_{1-\frac{2s}{n}}$,
\end{lemma}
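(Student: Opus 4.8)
The plan is to derive the atomic decomposition for $\mathcal{Q}_{1-\frac{2s}{n}}$ directly from the characterization of the space by Carleson-type measures. Recall that, by definition and by the standard estimate $G^p(z,\omega)\approx$ (an integral of Poisson-type kernels), $f\in\mathcal{Q}_{1-\frac{2s}{n}}$ is equivalent to saying that $|\nabla f(z)|^2(1-|z|^2)^{1-2s}\,d\nu(z)$ — equivalently, after switching to the M\"obius-invariant measure, $|\widetilde\nabla f(z)|^2(1-|z|^2)^{n-2s}\,d\lambda(z)$ — is an element of $\mathcal{CM}_{1-\frac{2s}{n}}$; this follows by the same computation as in Step 1 of the proof of Theorem \ref{t33} (with $2s$ in the role of the exponent there) together with Theorem \ref{t31}. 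So both halves of the lemma reduce to statements about when the radial (or invariant) gradient of a series $\sum_k c_k (1-|a_k|^2)^b (1-\langle z,a_k\rangle)^{-b}$ generates a $\big(1-\frac{2s}{n}\big)$-Carleson measure.

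For part (i), the first step is to differentiate the series termwise: $\nabla f(z)=\sum_k c_k b\,(1-|a_k|^2)^b\,\overline{a_k}\,(1-\langle z,a_k\rangle)^{-b-1}$, valid because $b>n-s>0$ makes the kernels and their derivatives uniformly summable on compact subsets. Then I would estimate, for a fixed Carleson tube $Q_\delta(\zeta)$,
\begin{align*}
\int_{Q_\delta(\zeta)}|\nabla f(z)|^2(1-|z|^2)^{1-2s}\,d\nu(z)
&\lesssim \int_{Q_\delta(\zeta)}\Big(\sum_k |c_k|(1-|a_k|^2)^b\frac{d\nu(z)^{1/2}\text{-weight}}{|1-\langle z,a_k\rangle|^{b+1}}\Big)^2\ldots,
\end{align*}
which is best handled by first discretizing the integral over $Q_\delta(\zeta)$ against the $r$-lattice itself (using the finite-overlap property and that $(1-|z|^2)\approx(1-|a_j|^2)$ and $|1-\langle z,a_k\rangle|\approx|1-\langle a_j,a_k\rangle|$ when $z\in\mathbb{B}(a_j,r)$), reducing matters to a bilinear sum $\sum_{j,k}|c_j||c_k|(1-|a_j|^2)^{\ast}(1-|a_k|^2)^{\ast}|1-\langle a_j,a_k\rangle|^{-\ast}$ restricted to $a_j$ near the tube. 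One then applies Schur's test to the matrix $\big[(1-|a_j|^2)^{u}(1-|a_k|^2)^{v}|1-\langle a_j,a_k\rangle|^{-w}\big]$ — the lattice version of Lemma \ref{l22}, e.g. \cite[Theorem 2.30]{Zhu1} — with the test function $(1-|a_k|^2)^{\gamma}$ for a suitable $\gamma$, chosen exactly as in the proof of Theorem \ref{l23}; the hypothesis $b>n-s$ is what guarantees the relevant exponents fall in the admissible ranges. This converts the bilinear sum into a diagonal sum $\sum_k |c_k|^2(1-|a_k|^2)^{n-2s}$ over the tube, which is $\lesssim \delta^{np(1-\frac{2s}{n})}\|\cdot\|_{\mathcal{CM}}^2$ precisely because $\sum_k|c_k|^2(1-|a_k|^2)^{n-2s}\delta_{a_k}\in\mathcal{CM}_{1-\frac{2s}{n}}$.

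For part (ii), the route is via duality, mirroring the classical constructions in \cite{Rochberg, RochbergS, PengOu1}. The key fact is that $\mathcal{Q}_{1-\frac{2s}{n}}$, realized through the Carleson-measure condition on $|\widetilde\nabla f|^2(1-|z|^2)^{n-2s}\,d\lambda$, is the dual of a tent-type space whose atoms are exactly the normalized kernels $k_{a}(z)=(1-|a|^2)^b(1-\langle z,a\rangle)^{-b}$; so one writes $f$ against an $r$-lattice using the reproducing formula $f(z)=\int_{\mathbb B_n}f(\omega)\frac{(1-|\omega|^2)^{b-n-1}}{(1-\langle z,\omega\rangle)^b}\,d\nu(\omega)\cdot c_b$ (valid for $f$ in the Bergman-type space for $b$ large, which holds since $\mathcal{Q}_{1-\frac{2s}{n}}$ functions have controlled growth), discretizes the integral over the lattice as $\sum_k c_k (1-|a_k|^2)^b(1-\langle z,a_k\rangle)^{-b}$ with $c_k\approx f(a_k)(1-|a_k|^2)^{b-n-1}\nu(\mathbb{B}(a_k,r))$, and then verifies the Carleson bound on $\sum_k|c_k|^2(1-|a_k|^2)^{n-2s}\delta_{a_k}$ by comparing $|f(a_k)|^2(1-|a_k|^2)^{?}$ against the averaged quantity $\int_{\mathbb{B}(a_k,r)}|\widetilde\nabla f|^2(1-|z|^2)^{n-2s}\,d\lambda(z)$ — a sub-mean-value estimate for $|\widetilde\nabla f|$ plus a telescoping argument controlling $f(a_k)$ by gradients along a Bergman geodesic to the origin, exactly the mechanism already used in Case 2 of the proof of Theorem \ref{t31}. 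The main obstacle is part (ii): one must control the discretization error in passing from the continuous reproducing integral to the lattice sum while keeping the Carleson norm finite, and one must handle the fact that $|f(a_k)|$ itself is not directly a Carleson datum — it is the gradient that is — so the telescoping/geodesic estimate has to be executed carefully and uniformly over tubes. Part (i), by contrast, is essentially a packaged application of Schur's test in the lattice setting together with Theorem \ref{l23}'s exponent bookkeeping, and should go through routinely once $b>n-s$ is used.
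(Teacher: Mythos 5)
Your strategy is genuinely different from the paper's, so let me first note the contrast: the paper never constructs the decomposition for $s\in[0,\tfrac12)$ --- there $1-\tfrac{2s}{n}\in(\tfrac{n-1}{n},1]$ and it simply invokes \cite[Theorem 1]{PengOu1} --- while for $s\in(-\tfrac12,0)$ it identifies $\mathcal{Q}_{1-\frac{2s}{n}}$ with the Bloch space, quotes the known Bloch atomic decomposition with $\ell^\infty$ coefficients \cite[Theorem 3.23]{Zhu1}, and then proves the equivalence $\{c_k\}\in\ell^\infty\Leftrightarrow\sum_k|c_k|^2(1-|a_k|^2)^{n-2s}\delta_{a_k}\in\mathcal{CM}_{1-\frac{2s}{n}}$ using Lemma \ref{l21} and a subharmonicity estimate. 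Your part (i) (termwise differentiation, discretization over the lattice, Schur's test on the lattice matrix plus a tail estimate as in Theorem \ref{l23}) is a plausible outline of the kind of argument that actually appears in \cite{WuX2,PengOu1}, though as written it is only a sketch with all exponents left unspecified.

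Part (ii), however, has a genuine gap. Discretizing the reproducing formula gives, as you say, $c_k\approx f(a_k)(1-|a_k|^2)^{b-n-1}\nu(\mathbb{B}(a_k,r))\approx f(a_k)$, and these coefficients fail the required condition throughout $s\in[0,\tfrac12)$. Take $f\equiv 1\in\mathcal{Q}_{1-\frac{2s}{n}}$, so $c_k\approx 1$: the number of lattice points $a_k$ in a Carleson tube $Q_h(\zeta)$ with $1-|a_k|\approx 2^{-j}h$ is $\approx 2^{jn}$, hence
$$
\sum_{a_k\in Q_h(\zeta)}|c_k|^2(1-|a_k|^2)^{n-2s}\approx h^{n-2s}\sum_{j\ge 0}2^{2js}=\infty\quad\hbox{for}\quad s\ge 0,
$$
whereas membership in $\mathcal{CM}_{1-\frac{2s}{n}}$ requires this to be $\lesssim h^{n-2s}$ (and, as noted after Definition \ref{l20}, the measure must at least be finite). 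The telescoping/geodesic repair you propose cannot close this: the best pointwise bound the gradient data yields is of Bloch type, $|f(a_k)|\lesssim\log\frac{2}{1-|a_k|}$, which leaves the sum just as divergent. The underlying point is that in the range $p=1-\frac{2s}{n}\le 1$ the correct coefficients are \emph{not} point evaluations of $f$; the proofs in \cite{RochbergS,WuX2,PengOu1} manufacture them through a tent-space duality/stopping-time mechanism in which the bound on $\sum_k|c_k|^2(1-|a_k|^2)^{n-2s}\delta_{a_k}$ comes from the pairing, not from a growth estimate on $f$. So your argument, even after the standard $\|\mathsf{I}-\mathsf{S}\|<1$ iteration needed to make the representation exact, establishes the series expansion but not the coefficient condition, which is the substance of part (ii).
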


\begin{proof} Let 
$$
d\mu=\sum\limits_{k=1}^{\infty}|c_k|^2(1-|a_k|^2)^{n-2s}\delta_{ a_k}.
$$

{\it Case 1}: $0\leq s<\frac{1}{2}$. This ensures $p=1-\frac{2s}{n}\in(\frac{n-1}{n},1]$. So, the decomposition assertion under this case follows from \cite[Theorem 1]{PengOu1}.

{\it Case 2}: $-\frac{1}{2}< s<0$. Under this situation, one has that $\mathcal{Q}_{1-\frac{2s}{n}}$ coincides with the Bloch space $\mathcal{B}$ which, according to \cite[Theorem 3.23]{Zhu1}, consists of the following functions 
$$
f(z)=\sum\limits_{k=1}^{\infty}c_k\frac{(1-|a_k|^2)^{b}}{(1-\langle z, a_k\rangle)^b},
$$
where $\{c_k\}\in l^{\infty}.$ Hence, it suffices to prove that 
$\{c_k\}\in l^{\infty}$ if and only if 
$$
\sum\limits_{k=1}^{\infty}|c_k|^2(1-|a_k|^2)^{n-2s}\delta_{a_k}
$$ 
belongs to $\mathcal{CM}_{1-\frac{2s}{n}}$. In fact, if 
$$
\{c_k\}\in l^{\infty}\ \ \&\ \ d\mu=\sum\limits_{k=1}^{\infty}|c_k|^2(1-|a_k|^2)^{n-2s}\delta_{a_k},
$$ 
then
$$
\int_{\mathbb B_n}\frac{(1-|\omega|^2)^n}{|1-\langle z,\omega\rangle|^{2n-2s}}d\mu(z)
=\sum\limits_{k=1}^{\infty}|c_k|^2\frac{(1-|a_k|^2)^{n-2s}(1-|\omega|^2)^n}{|1-\langle a_k,\omega\rangle|^{2n-2s}}.
$$
Given $\omega\in\mathbb B_n$.  Using \cite[Lemma 2.24]{Zhu1} or the subharmonicity of 
$$
f(z)={(1-\langle z,\, \omega \rangle)^{2s-n}},
$$ 
we have 
$$
{|1-\langle a_k,\omega\rangle|^{2s-2n}}\lesssim {(1-|a_k|^2)^{2s-n}}\int_{\mathbb B(a_k, r)}\frac{(1-|z|^2)^{-1-2s}}{|1-\langle z,\omega\rangle|^{2n-2s}}d\nu(z),
$$
whence getting
$$
\frac{(1-|a_k|^2)^{n-2s}}{|1-\langle a_k,\omega\rangle|^{2n-2s}}\lesssim
\int_{\mathbb {B}(a_k, r/4)}\frac{(1-|z|^2)^{-1-2s}}{|1-\langle z,\omega\rangle|^{2n-2s}}d\nu(z).
$$
Since $\mathbb B_n=\cup_{k} \mathbb {B}(a_k,r/4)$, an application of the last estimate implies 
\begin{align*}
&\sum\limits_{k=1}^{\infty}\frac{(1-|a_k|^2)^{n-2s}}{|1-\langle a_k,\omega\rangle|^{2n-2s}}\\
&\lesssim
\int_{\cup_{k}\mathbb {B}(a_k, r/4)}\frac{(1-|z|^2)^{-1-2s}}{|1-\langle z,\omega\rangle|^{2n-2s}}d\nu(z)\\
&\lesssim
\int_{\mathbb B_n}\frac{(1-|z|^2)^{-1-2s}}{|1-\langle z,\omega\rangle|^{2n-2s}}d\nu(z)\\
&\lesssim {(1-|\omega|^2)^{-n}}
\end{align*}
where \cite[Proposition 1.4.10]{Rudin} has been used. Consequently,
if $\{c_k\}\in l^{\infty}$, then $$
\sup\limits_{\omega\in\mathbb B_n}\sum\limits_{k=1}^{\infty}\frac{(1-|\omega|^2)^n(1-|a_k|^2)^{n-2s}}{|1-\langle a_k,\omega\rangle|^{2n-2s}}\lesssim 1,
$$
which shows $ \mu\in\mathcal{CM}_{1-\frac{2s}{n}}.$

On the other hand, if $\mu\in\mathcal{CM}_{1-\frac{2s}{n}}$, then an application of Lemma \ref{l21} yields $\{c_k\}\in  l^{\infty}$ via
$$
|c_k|^2\lesssim\sup\limits_{\omega\in\mathbb B_n}\int_{\mathbb B_n}\frac{(1-|\omega|^2)^n}{|1-\langle z,\omega\rangle|^{2n-2s}}d\mu(z)\lesssim\|\mu\|_{\mathcal{CM}_{1-\frac{2s}{n}}}^2.
$$
\end{proof}

\begin{lemma} \cite[Proposition 1.14]{Zhu1}
\label{t45}
If $\alpha$ and $t$ are two real parameters such that neither $n+\alpha$ nor $n+\alpha+t$ is a negative integer, then 
$$
\mathsf R^{\alpha,t}\left(\frac{1}{(1-\langle z, \omega\rangle)^{n+1+\alpha}}\right)=\frac{1}{(1-\langle z,\omega\rangle)^{n+1+\alpha+t}}
\quad\forall\quad\omega\in\mathbb B_n.
$$
\end{lemma}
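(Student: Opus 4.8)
The plan is to reduce everything to the binomial (homogeneous) expansion of the kernel and then match coefficients. First I would recall that for any complex number $c$ that is not a non-positive integer one has
$$
\frac{1}{(1-\langle z,\omega\rangle)^{c}}=\sum_{k=0}^\infty \frac{\Gamma(k+c)}{k!\,\Gamma(c)}\langle z,\omega\rangle^{k},
$$
the series converging absolutely and locally uniformly in $z\in\mathbb B_n$ for each fixed $\omega\in\mathbb B_n$ since $|\langle z,\omega\rangle|<1$. The hypothesis that $n+\alpha$ is not a negative integer guarantees that $c=n+1+\alpha$ is not a non-positive integer, so the above is precisely the homogeneous expansion $f=\sum_k f_k$ of the left-hand side, with $f_k(z)=\frac{\Gamma(k+n+1+\alpha)}{k!\,\Gamma(n+1+\alpha)}\langle z,\omega\rangle^{k}$.

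Next I would apply the definition of $\mathsf R^{\alpha,t}$ term by term. Since $\mathsf R^{\alpha,t}$ multiplies $f_k$ by $\frac{\Gamma(n+1+\alpha)\Gamma(n+1+k+\alpha+t)}{\Gamma(n+1+\alpha+t)\Gamma(n+1+k+\alpha)}$, which is well defined because neither $n+\alpha$ nor $n+\alpha+t$ is a negative integer (hence none of the arguments $n+1+k+\alpha$, $n+1+k+\alpha+t$ with $k\ge 0$ meets a pole of $\Gamma$), the $k$-th term of $\mathsf R^{\alpha,t}f$ equals
$$
\frac{\Gamma(n+1+\alpha)\Gamma(n+1+k+\alpha+t)}{\Gamma(n+1+\alpha+t)\Gamma(n+1+k+\alpha)}\cdot\frac{\Gamma(k+n+1+\alpha)}{k!\,\Gamma(n+1+\alpha)}\langle z,\omega\rangle^{k}=\frac{\Gamma(k+n+1+\alpha+t)}{k!\,\Gamma(n+1+\alpha+t)}\langle z,\omega\rangle^{k},
$$
after the two occurrences of $\Gamma(n+1+\alpha)$ and of $\Gamma(n+1+k+\alpha)$ cancel.

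Finally I would sum over $k$ and recognize the resulting series, via the first displayed identity applied now with $c=n+1+\alpha+t$ (legitimate since $n+\alpha+t$ is not a negative integer), as the expansion of $(1-\langle z,\omega\rangle)^{-(n+1+\alpha+t)}$; this gives the claimed formula. The passage of $\mathsf R^{\alpha,t}$ through the infinite sum needs no extra justification, as that is exactly how $\mathsf R^{\alpha,t}$ is defined to act on a homogeneous expansion. So there is really no substantive obstacle here: the only point requiring care is checking that the stated hypothesis on $\alpha$ and $t$ keeps every $\Gamma$-factor away from its poles — which is precisely why that hypothesis is imposed — and the rest is bookkeeping with the Gamma quotients.
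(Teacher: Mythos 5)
Your argument is correct and is exactly the standard proof: the paper itself does not prove this lemma but cites it to \cite[Proposition 1.14]{Zhu1}, where the same computation (binomial expansion of the kernel into its homogeneous parts, termwise application of $\mathsf R^{\alpha,t}$, cancellation of the Gamma quotients, and resummation) is carried out. Your check that the hypothesis on $\alpha$ and $t$ is precisely what keeps $n+1+\alpha$ and $n+1+\alpha+t$ (and hence all shifts by $k\ge 0$) off the poles of $\Gamma$ is the right point of care, and nothing further is needed.
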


With the help of Lemmas \ref{t44}-\ref{t45}, we discover the following atomic decomposition.

\begin{theorem}
\label{t43}
Suppose $s\in(-\frac{1}{2},\frac{1}{2})$ and $b>n$.

{\rm(i)} If $\{a_k\}\subset\mathbb B_n$ is an $r$-lattice as described above and $\{c_k\}$ is a sequence in $\mathbb C$ such that  
$$
\sum_{k=1}^{\infty}|c_k|^2(1-|a_k|^2)^{n}\delta_{ a_k}\in\mathcal{CM}_{1-\frac{2s}{n}},
$$
then 
$$
f(z)=\sum\limits_{k=1}^{\infty}c_k\frac{(1-|a_k|^2)^{b}}{(1-\langle z, a_k\rangle)^b}\in\mathcal{HC}^s.
$$
 
{\rm (ii)} If $f\in\mathcal{HC}^s$, then there are an $r$-lattice $\{a_k\}\subset\mathbb B_n$ (as described above) and a complex sequence $\{c_k\}$ such that 
$$
f(z)=\sum\limits_{k=1}^{\infty}c_k\frac{(1-|a_k|^2)^{b}}{(1-\langle z, a_k\rangle)^b}\quad\hbox{with}\quad
\sum\limits_{k=1}^{\infty}|c_k|^2(1-|a_k|^2)^n\delta_{ a_k}\in\mathcal{CM}_{1-\frac{2s}{n}}.
$$
\end{theorem}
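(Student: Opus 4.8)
The plan is to run everything through the fractional-differentiation isomorphism $\mathsf{R}^{\alpha,-s}\colon\mathcal{HC}^s\to\mathcal{Q}_{1-\frac{2s}{n}}$ supplied by Theorem~\ref{t41}, together with the $\mathcal{Q}_{1-\frac{2s}{n}}$-atomic decomposition of Lemma~\ref{t44} and the reproducing-kernel identity of Lemma~\ref{t45}. The point is that $\mathsf{R}^{\alpha,t}$ and its inverse move an atom $(1-\langle z,a_k\rangle)^{-\beta}$ to one with the exponent $\beta$ shifted by $t$ (Lemma~\ref{t45}), while leaving the scalar weights $(1-|a_k|^2)^{(\cdot)}$ at our disposal; so a decomposition of $\mathcal{Q}_{1-\frac{2s}{n}}$ with atom-exponent $b-s$ transports to one of $\mathcal{HC}^s$ with atom-exponent $b$, the coefficients being rescaled by $(1-|a_k|^2)^{\pm s}$ — precisely the rescaling that turns the Carleson condition $\sum_k|c_k|^2(1-|a_k|^2)^{n-2s}\delta_{a_k}\in\mathcal{CM}_{1-\frac{2s}{n}}$ of Lemma~\ref{t44} into the condition $\sum_k|c_k|^2(1-|a_k|^2)^{n}\delta_{a_k}\in\mathcal{CM}_{1-\frac{2s}{n}}$ stated here.

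For (i): given the $r$-lattice $\{a_k\}$ and $\{c_k\}$ with $\sum_k|c_k|^2(1-|a_k|^2)^n\delta_{a_k}\in\mathcal{CM}_{1-\frac{2s}{n}}$, put $f(z)=\sum_k c_k(1-|a_k|^2)^b(1-\langle z,a_k\rangle)^{-b}$ and fix $\alpha=b-n-1$, so that, since $b>n$ and $|s|<\frac{1}{2}$, none of $n+\alpha=b-1$ and $n+\alpha\mp s=b-1\mp s$ is a negative integer. Applying $\mathsf{R}^{\alpha,-s}$ term by term and invoking Lemma~\ref{t45} with $t=-s$ gives $\mathsf{R}^{\alpha,-s}f(z)=\sum_k c_k(1-|a_k|^2)^b(1-\langle z,a_k\rangle)^{-(b-s)}$; rewriting this as $\sum_k d_k(1-|a_k|^2)^{b-s}(1-\langle z,a_k\rangle)^{-(b-s)}$ with $d_k=c_k(1-|a_k|^2)^s$, we have $b-s>n-s$ and $\sum_k|d_k|^2(1-|a_k|^2)^{n-2s}\delta_{a_k}=\sum_k|c_k|^2(1-|a_k|^2)^n\delta_{a_k}\in\mathcal{CM}_{1-\frac{2s}{n}}$, so Lemma~\ref{t44}(i) yields $\mathsf{R}^{\alpha,-s}f\in\mathcal{Q}_{1-\frac{2s}{n}}$, and Theorem~\ref{t41} gives $f\in\mathcal{HC}^s$.

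For (ii): given $f\in\mathcal{HC}^s$, Theorem~\ref{t41} gives $\mathsf{R}^{\alpha,-s}f\in\mathcal{Q}_{1-\frac{2s}{n}}$ for the same admissible $\alpha=b-n-1$; Lemma~\ref{t44}(ii), applied with atom-exponent $b-s\,(>n-s)$, produces an $r$-lattice $\{a_k\}$ and $\{d_k\}$ with $\mathsf{R}^{\alpha,-s}f(z)=\sum_k d_k(1-|a_k|^2)^{b-s}(1-\langle z,a_k\rangle)^{-(b-s)}$ and $\sum_k|d_k|^2(1-|a_k|^2)^{n-2s}\delta_{a_k}\in\mathcal{CM}_{1-\frac{2s}{n}}$. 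Now apply the inverse $\mathsf{R}_{\alpha,-s}=\mathsf{R}^{\alpha-s,\,s}$ term by term; since $n+1+(\alpha-s)=b-s$, Lemma~\ref{t45} (with $t=s$) restores $f(z)=\sum_k d_k(1-|a_k|^2)^{b-s}(1-\langle z,a_k\rangle)^{-b}$, and setting $c_k=d_k(1-|a_k|^2)^{-s}$ gives the asserted form together with $\sum_k|c_k|^2(1-|a_k|^2)^n\delta_{a_k}=\sum_k|d_k|^2(1-|a_k|^2)^{n-2s}\delta_{a_k}\in\mathcal{CM}_{1-\frac{2s}{n}}$.

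The substantive analytic content is inherited entirely from Theorem~\ref{t41} and Lemma~\ref{t44}, so what needs care is bookkeeping: (a) justifying the term-by-term action of $\mathsf{R}^{\alpha,\pm s}$ on the infinite sums, which follows from local uniform convergence of the kernel series in $\mathcal{H}(\mathbb{B}_n)$ (already implicit in Lemma~\ref{t44}) together with the continuity of the radial fractional (anti)derivative on $\mathcal{H}(\mathbb{B}_n)$; (b) keeping the three exponents $b$, $b-s$, and $n+1+\alpha$ correctly aligned so that Lemma~\ref{t45} applies without a sign slip; and (c) checking that $b>n$ and $s\in(-\frac{1}{2},\frac{1}{2})$ indeed force both $b-s>n-s$ and the absence of exceptional negative-integer values of $n+\alpha$ and $n+\alpha\mp s$. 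I expect (b) to be the most error-prone point, but there is no genuine analytic obstacle beyond what the earlier sections already provide.
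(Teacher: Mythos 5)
Your proposal is correct and follows essentially the same route as the paper: transport the problem to $\mathcal{Q}_{1-\frac{2s}{n}}$ via the fractional differentiation of Theorem~\ref{t41}/Corollary~\ref{t42}, invoke the atomic decomposition of Lemma~\ref{t44}, shift the atom exponent by $s$ using Lemma~\ref{t45}, and rescale the coefficients by $(1-|a_k|^2)^{\pm s}$ so that the weight $(1-|a_k|^2)^{n-2s}$ becomes $(1-|a_k|^2)^n$. Your explicit choice of $\alpha$ and the convergence remark are slightly more careful than the paper's write-up, but the substance is identical.
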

\begin{proof} (i) If 
$$\tilde{b}=b-s\ \ \&\ \ \tilde{c}_k=
c_k(1-|a_k|^2)^{s},
$$ 
then 
$$
\begin{cases}
\tilde{b}>n;\\
|\tilde{c}_k|^2(1-|a_k|^2)^{n-2s}=|{c}_k|^2(1-|a_k|^2)^{n}.
\end{cases}
$$
Note that 
$$
\sum\limits_{k=1}^{\infty}|c_k|^2(1-|a_k|^2)^{n-2s}\delta_{a_k}\in\mathcal{CM}_{1-\frac{2s}{n}}\Rightarrow
 \sum\limits_{k=1}^{\infty}|\tilde{c}_k|^2(1-|a_k|^2)^{n}\delta_{a_k}\in\mathcal{CM}_{1-\frac{2s}{n}}.
 $$ 
So, an application of Corollary \ref{t42} and Lemma \ref{t44}(i) derive
$$ 
f(z)=\sum\limits_{k=1}^{\infty}{c}_k\frac{(1-|a_k|^2)^{{b}}}{(1-\langle z, a_k\rangle)^{{b}}}=\sum\limits_{k=1}^{\infty}\tilde{c}_k\frac{(1-|a_k|^2)^{\tilde{b}}}{(1-\langle z, a_k\rangle)^{\tilde{b}+s}}=\mathsf{R}^{\alpha,s}\left( \sum\limits_{k=1}^{\infty}\tilde{c}_k\frac{(1-|a_k|^2)^{\tilde{b}}}{(1-\langle z, a_k\rangle)^{\tilde{b}}}\right)\in\mathcal{HC}^s.
$$

(ii) Suppose $f\in \mathcal{HC}^s$, then Theorem \ref {t41} is used to imply $\mathsf{R}^{\alpha,-s}f\in \mathcal{Q}_{1-\frac{2s}{n}}$. According to Lemma \ref{t44}(ii), there exist an $r$-lattice $\{a_k\}\subset\mathbb B_n$ and a complex sequence $\{\tilde c_k\}$ such that
$$
\mathsf{R}^{\alpha,-s}f(z)=\sum\limits_{k=1}^{\infty}\tilde{c}_k\frac{(1-|a_k|^2)^{\tilde{b}}}{(1-\langle z, a_k\rangle)^{\tilde b}},
$$
where 
$$
\tilde b>n-s\ \ \&\ \
\sum\limits_{k=1}^{\infty}|\tilde c_k|^2(1-|a_k|^2)^{n-2s}\delta_{a_k}\in\mathcal{CM}_{1-\frac{2s}{n}}.
$$
Let
$$b=\tilde{b}+s\ \ \&\ \ c_k=\tilde{c}_k(1-|a_k|^2)^{-s}.
$$ 
Taking $\mathsf {R}^{\alpha,s}$ on both sides of the last representation of $\mathsf{R}^{\alpha,s}f$, and using Lemma \ref{t45}, we get
\begin{align*}
f(z)=\sum\limits_{k=1}^{\infty}c_k\frac{(1-|a_k|^2)^{b}}{(1-\langle z, a_k\rangle)^{b}}.
\end{align*}
\end{proof}

\section{Associated Gleason problem}\label{s5}
\setcounter{equation}{0}

\subsection{Canonical example and growth}\label{s51}
For $\alpha>0$, let $\mathcal{B}_{\alpha}$ denote the $\alpha$-Bloch space of holomorphic functions $f$ in $\mathbb B_n$ with
$$
\|f\|_{\mathcal{B}_{\alpha},*}=\sup\limits_{z\in\mathbb B_n}(1-|z|^2)^{\alpha}|\nabla f(z)|<\infty.
$$
In particular, $\mathcal{B}_{1}$ is just the classical Bloch space $\mathcal B$ in $\mathbb B_n$. Further, $\mathcal{B}_{\alpha}$ becomes a Banach space equipped with the norm 
$|f(0)|+\|f\|_{\mathcal{B}_{\alpha},*}.$

\begin{example}
\label{t51}
For $s\in(-\frac{1}{2},\frac{n}{2}]$ and $a\in\mathbb B_n$, let 
$$
f_{a}(z)=\frac{(1-|a|^2)^{n-s}}{(1-\langle z, a\rangle)^{n}}.
$$
Then this function is regarded as a canonical example of $\mathcal{HC}^s$ in the sense of:
$$
\sup\limits_ {a\in\mathbb B_n} \|f_a\|_{\mathcal{HC}^s}<\infty.   
$$
\end{example}
\begin{proof}
When $s=\frac{n}{2}$, the assertion is well-known; see \cite[Theorem 4.1.7]{Zhu1}. Hence, it remains to handle the case $s\in(-\frac{1}{2},\frac{n}{2})$. It is easy to calculate
$$
|\nabla f_{a}(z)|\approx\frac{(1-|a|^2)^{n-s}\bar{a}}{|1-\langle z, a\rangle|^{n+1}}.
$$
Thus, for any $\omega\in\mathbb B_n$, Lemma \ref{l22} is utilized to derive 
\begin{align*}
&\int_{\mathbb B_n}\frac{(1-|\omega|^2)^{1+2s}}{|1-\langle z,\omega\rangle|^{n+1}}|\nabla f_{a}(z)|^2(1-|z|^2)d\nu(z)\\
&\lesssim\int_{\mathbb B_n}\frac{(1-|a|^2)^{2n-2s}(1-|\omega|^2)^{1+2s}(1-|z|^2)}{|1-\langle z,a\rangle|^{2n+2}|1-\langle z,\omega\rangle|^{n+1}}\,d\nu(z)\\
&\lesssim\frac{(1-|a|^2)^{2n-2s}(1-|\omega|^2)^{1+2s}}{(1-|a|^2)^n|1-\omega\bar{a}'|^{n+1}}\\
&=\left|\frac{1-|a|^2}{1-\omega\bar{a}'}\right|^{n-2s}\left|\frac{1-|\omega|^2}{1-\omega\bar{a}'}\right|^{1+2s}\lesssim 1
\end{align*}
Let
 $$
 \begin{cases}
 d\mu(z)=|\nabla f_{a}(z)|^2(1-|z|^2)d\nu(z);\\
p=1-\frac{2s}{n};\\
q=\frac{1+2s}{n}.
\end{cases}
$$ 
Then an application of Lemma \ref{l21} implies $\mu\in\mathcal{CM}_{1-\frac{2s}{n}}$ via
$$
\sup\limits_{\omega\in\mathbb B_n}\int_{\mathbb B_n}\frac{(1-|\omega|^2)^{nq}}{|1-\langle z,\omega\rangle|^{n(p+q)}}d\mu(z)=\sup\limits_{\omega\in\mathbb B_n}\int_{\mathbb B_n}\frac{(1-|\omega|^2)^{1+2s}}{|1-\langle z,\omega\rangle|^{n+1}}d\mu(z)\lesssim 1.
$$
According to Theorem \ref{t33}, we get 
$$
f_a\in\mathcal{HC}^s\ \ \&\ \ \sup\limits_ {a\in\mathbb B_n} \|f_a\|_{\mathcal{HC}^s}<\infty.
$$
\end{proof}

Below is the canonical growth of a function in $\mathcal{HC}^s\subseteq\mathcal{B}_{1+s}$.

\begin{lemma}
\label{t52}
Suppose $s\in(-\frac{1}{2},\frac{n}{2}]$. If $f\in\mathcal{\mathcal{HC}}^{s}$, then 
$$|\nabla f(z)|\lesssim\frac{\|f\|_{\mathcal{HC}^{s}}}{(1-|z|^2)^{1+s}}\quad\forall\quad z\in\mathbb B_n.
$$ 
Moreover, the exponent $1+s$ in the last inequality is sharp.
\end{lemma}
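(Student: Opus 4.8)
The plan is to prove the two separate assertions: the growth estimate $|\nabla f(z)|\lesssim\|f\|_{\mathcal{HC}^{s}}(1-|z|^2)^{-(1+s)}$, and the sharpness of the exponent $1+s$. For the growth estimate, I would start from the invariant-gradient characterization in Theorem \ref{t31}(iii), or more directly from Theorem \ref{t33}: since $f\in\mathcal{HC}^s$ we know that $|\nabla f(z)|^2(1-|z|^2)\,d\nu(z)\in\mathcal{CM}_{1-\frac{2s}{n}}$, with Carleson norm controlled by $\|f\|_{\mathcal{HC}^s}$. The standard route is then a pointwise estimate: for a fixed $z\in\mathbb B_n$ with $|z|>\tfrac12$ say, apply the sub-mean-value property of the plurisubharmonic function $|\nabla f|^2$ over a Bergman ball $\mathbb B(z,r)$ (whose volume is $\approx(1-|z|^2)^{n+1}$ and on which $1-|\omega|^2\approx 1-|z|^2$), obtaining
$$
|\nabla f(z)|^2\lesssim\frac{1}{(1-|z|^2)^{n+1}}\int_{\mathbb B(z,r)}|\nabla f(\omega)|^2\,d\nu(\omega)\lesssim\frac{1}{(1-|z|^2)^{n+2}}\int_{\mathbb B(z,r)}|\nabla f(\omega)|^2(1-|\omega|^2)\,d\nu(\omega).
$$
Since $\mathbb B(z,r)$ is contained in a Carleson tube $Q_\delta(\zeta)$ with $\delta\approx 1-|z|^2$ and $\zeta=z/|z|$, the Carleson condition gives that the last integral is $\lesssim\delta^{n(1-\frac{2s}{n})}\|f\|_{\mathcal{HC}^s}^2=(1-|z|^2)^{n-2s}\|f\|_{\mathcal{HC}^s}^2$, which yields $|\nabla f(z)|^2\lesssim(1-|z|^2)^{-2-2s}\|f\|_{\mathcal{HC}^s}^2$, i.e. the claimed bound; the case $|z|\le\tfrac12$ is trivial from $f\in\mathcal H^2$. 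This also proves the inclusion $\mathcal{HC}^s\subseteq\mathcal B_{1+s}$ asserted just before the lemma.

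For sharpness, the natural candidate is precisely the canonical example of the preceding Example \ref{t51}: for $a\in\mathbb B_n$ put $f_a(z)=(1-|a|^2)^{n-s}(1-\langle z,a\rangle)^{-n}$, which satisfies $\sup_{a}\|f_a\|_{\mathcal{HC}^s}<\infty$. From the computation $|\nabla f_a(z)|\approx(1-|a|^2)^{n-s}|\bar a|\,|1-\langle z,a\rangle|^{-(n+1)}$ already recorded in the proof of Example \ref{t51}, evaluating at $z=a$ (with $|a|$ bounded below, say $|a|\ge\tfrac12$) gives $|\nabla f_a(a)|\approx(1-|a|^2)^{n-s}(1-|a|^2)^{-(n+1)}=(1-|a|^2)^{-(1+s)}$. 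Hence if the growth exponent could be improved to $1+s-\varepsilon$ for some $\varepsilon>0$, we would get $(1-|a|^2)^{-(1+s)}\lesssim(1-|a|^2)^{-(1+s-\varepsilon)}$ uniformly in $a$, which fails as $|a|\to 1^-$. This contradiction shows $1+s$ is sharp.

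The only mild obstacle is bookkeeping at the endpoint $s=n/2$, where $\mathcal{HC}^{n/2}=\mathcal H^2$ and Theorem \ref{t33} is stated only for $s\in(-\tfrac12,\tfrac n2)$; there one falls back on the classical fact that $f\in\mathcal H^2$ implies $|\nabla f(z)|\lesssim\|f\|_{\mathcal H^2}(1-|z|^2)^{-(1+n/2)}$ (e.g. via the sub-mean-value property applied to the area-integral form $\int_{\mathbb B_n}|\nabla f|^2(1-|z|^2)\,d\nu\approx\|f-f(0)\|_{\mathcal H^2}^2$), and the same $f_a$ with $s=n/2$ furnishes sharpness. Otherwise every ingredient—the sub-mean-value inequality for $|\nabla f|^2$, comparability of $1-|\omega|^2$ with $1-|z|^2$ on a Bergman ball, and containment of that ball in a Carleson tube of comparable size—is entirely routine, so I expect no genuine difficulty.
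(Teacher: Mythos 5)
Your proposal is correct, but it reaches the growth estimate by a genuinely different route from the paper. The paper argues case by case: for $s\in(-\frac12,0)$ it quotes the Lipschitz-space gradient characterization, for $s=0$ it uses $\mathcal{BMOA}\subset\mathcal B$, for $s\in(0,\frac n2)$ it first invokes the pointwise bound $|f(z)|\lesssim\|f\|_{\mathcal{HC}^s}(1-|z|^2)^{-s}$ from Cascante--F\`abrega--Ortega, writes $f$ via the weighted Bergman reproducing formula, and differentiates under the integral using \cite[Proposition 1.4.10]{Rudin}; the endpoint $s=\frac n2$ is the classical $\mathcal H^2$ estimate. You instead give a single unified argument: Theorem \ref{t33} puts $|\nabla f(z)|^2(1-|z|^2)\,d\nu(z)$ in $\mathcal{CM}_{1-\frac{2s}{n}}$, and the sub-mean-value property of $|\nabla f|^2$ on a Bergman ball contained in a Carleson tube of radius $\approx 1-|z|^2$ yields the bound in one stroke. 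This is cleaner and avoids the external pointwise estimate, at the cost of leaning on the quantitative form of Theorem \ref{t33} (the paper states only the qualitative equivalence, but its proof does track norms, so $\|\,|\nabla f|^2(1-|z|^2)d\nu\|_{\mathcal{CM}_{1-2s/n}}\lesssim\|f\|^2_{\mathcal{HC}^s}$ is available) and on handling $s=\frac n2$ separately, which you do correctly via the finite measure $\int_{\mathbb B_n}|\nabla f|^2(1-|z|^2)\,d\nu\lesssim\|f\|_{\mathcal H^2}^2$. Your sharpness argument is identical to the paper's: the canonical example $f_a$ of Example \ref{t51} with $|\nabla f_a(a)|\approx(1-|a|^2)^{-(1+s)}$ and $\sup_a\|f_a\|_{\mathcal{HC}^s}<\infty$.
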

\begin{proof} The argument for the desired inequality is split into three cases.

{\it Case 1. } $s\in(-\frac{1}{2},0)$. Under this, the desired inequalty follows from \cite[Theorem 7.9]{Zhu1}.

{\it Case 2.} $s\in(0,\frac{n}{2})$. A special case of $p=2$ in \cite[Proposition 4.4 ]{CascanteFO} shows 
$$
|f(z)|\lesssim\frac{\|f\|_{\mathcal{HC}^s}}{(1-|z|^2)^{s}}
\quad\forall\quad z\in\mathbb B_n.
$$ 
Hence $f$ belongs to the weighted Bergman space
$$
\mathcal{L}_a^1\big(\mathbb {B}_n,\, d\nu_{1+s}(z)\big)=\left\{f\in\mathcal{H}(\mathbb B_n):\,\int_{\mathbb B_n}|f(z)|(1-|z|^2)^{s}d\nu(z)<\infty\right\}.
$$
Applying \cite[Theorem2.2]{Zhu1}, we have 
$$
f(z)=c_{s}\int_{\mathbb B_n}\frac{(1-|\omega|^{2})^sf(\omega)}{(1-\langle z, \omega\rangle)^{n+1+s}}\,d\nu(\omega)\ \ \hbox{where}\ \ 
c_{s}=\frac{\Gamma(n+s+1)}{n!\Gamma(s+1)}
$$
Therefore, an application of \cite[Proposition 1.4.10]{Rudin} and the last estimate derives
$$
|\nabla f(z)|\\
\approx \left|\int_{\mathbb B_n}\frac{\bar{\omega}(1-|\omega|^{2})^sf(\omega)}{(1-\langle z, \omega\rangle)^{n+2+s}}\,d\nu(\omega)\right|
\lesssim\frac{\|f\|_{\mathcal{HC}^{s}}}{(1-|z|^2)^{1+s}}.
$$

{\it Case 3.} $s=0$.  This follows from $\mathcal{BMOA}\subset \mathcal{B}$.

{\it Case 4.} $s=\frac{n}{2}$.  The desired inequality is just
the estimate for $\mathcal{H}^2$.

The sharpness of $1+s$ follows from an application of Example \ref{t51} to 
$$
f_{a}(z)=\frac{(1-|a|^2)^{n-s}}{(1-\langle z, a\rangle)^n}\ \ \&\ \ 
|\nabla f_{a}(a)|\approx\frac{n|a|}{(1-|a|^2)^{1+s}}\ \
\forall\ \ a\in\mathbb B_n.
$$
\end{proof}

\subsection{Solution to the associated Gleason problem}\label{s52}
Originally motivated by \cite{Gleason}, we state the general Gleason problem below: 

\medskip
{\it Let $X$ be a holomorphic function space on a domain $\Omega\subset\mathbb C^n$. Given $a=(a_1,...,a_n)\in\Omega$ and $f\in X$ with $f(a)=0$, are there $\mathsf{A}_1,...,\mathsf{A}_n\in X$ such that 
$$
f(z)-f(a)=\sum_{k=1}^{n}(z_k-a_k)\mathsf{A}_k(z)\ \ \forall\ \ z\in\Omega\ ?
$$}
\medskip
Interestingly, this problem is solvable in many holomorphic function spaces; see e.g. \cite{KerzmanN, Ortega, Zhu2, Zhang} and the relevant references therein. Even more interestingly, this problem, upon being associated with $\mathcal{HC}^s$, is still solvable.

\begin{theorem}
\label{t53}
Let $s\in(-\frac{1}{2},\frac{n}{2}]$ and $m\in\mathbb Z^+$. Then 
for each $\alpha=(\alpha_1,...,\alpha_n)$ obeying $|a|=m$ there exists a bounded linear operator $\mathsf{A}_{\alpha}$ on 
$\mathcal{HC}^{s}$ such that 
$$
f(z)=\sum_{|\alpha|=m}z^{\alpha}\mathsf{A}_{\alpha}f(z)\quad\forall\quad z\in\mathbb B_n
$$
with 
$$
\partial^\gamma f(0)=0\quad\forall\quad |\gamma|=|(\gamma_1,...,\gamma_n)|=\sum_{j=1}^n\gamma_j\in\{0,1,...,m-1\}.
$$
\end{theorem}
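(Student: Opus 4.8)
The plan is to write the Gleason operators down explicitly from a one--variable Taylor expansion with integral remainder, and then to prove their boundedness on $\mathcal{HC}^{s}$ by converting the gradient of $\mathsf A_\alpha f$ into a kernel operator of type $\mathsf T_{a,b}$ and feeding it into Theorems \ref{t33} and \ref{l23}.

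First I would put, for each multi-index $\alpha$ with $|\alpha|=m$,
$$
\mathsf A_\alpha f(z)=\frac{m}{\alpha!}\int_0^1(1-t)^{m-1}(\partial^\alpha f)(tz)\,dt,\qquad z\in\mathbb B_n,
$$
which is linear in $f$ and holomorphic on $\mathbb B_n$. If $f$ satisfies $\partial^\gamma f(0)=0$ for all $|\gamma|\le m-1$, then applying Taylor's formula with integral remainder to the single--variable function $g(t):=f(tz)$ --- whose derivatives at $0$ of orders $0,\dots,m-1$ vanish because $\partial^\gamma f(0)=0$ for $|\gamma|\le m-1$, while $g^{(m)}(t)=\sum_{|\alpha|=m}\frac{m!}{\alpha!}z^{\alpha}(\partial^{\alpha}f)(tz)$ --- gives immediately
$$
f(z)=g(1)=\frac{1}{(m-1)!}\int_0^1(1-t)^{m-1}g^{(m)}(t)\,dt=\sum_{|\alpha|=m}z^{\alpha}\,\mathsf A_\alpha f(z),
$$
the desired representation. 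So everything comes down to the boundedness of each $\mathsf A_\alpha$ on $\mathcal{HC}^{s}$.

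The core is a pointwise bound on $\nabla(\mathsf A_\alpha f)$. For $f\in\mathcal{HC}^{s}$, Lemma \ref{t52} yields $|\partial_j f(\xi)|\lesssim(1-|\xi|^2)^{-1-s}\|f\|_{\mathcal{HC}^{s}}$, so --- fixing an integer $N$ large --- each $\partial_j f$ lies in $\mathcal{L}^1_a(\mathbb B_n,(1-|\xi|^2)^N d\nu)$ and is therefore reproduced by the weighted Bergman kernel, $\partial_j f(\omega)=c_N\int_{\mathbb B_n}(1-|\xi|^2)^N\partial_j f(\xi)(1-\langle\omega,\xi\rangle)^{-(n+1+N)}\,d\nu(\xi)$, by \cite[Theorem 2.2]{Zhu1}. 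Differentiating this identity $|\alpha|=m$ times in $\omega$, inserting $\omega=tz$ into the formula $\partial_j(\mathsf A_\alpha f)(z)=\frac{m}{\alpha!}\int_0^1(1-t)^{m-1}t\,(\partial^{\alpha+e_j}f)(tz)\,dt$, applying Fubini, and using the standard ``fold-in-$t$'' estimate $\int_0^1(1-t)^{m-1}|1-t\langle z,\xi\rangle|^{-(n+1+N+m)}\,dt\lesssim|1-\langle z,\xi\rangle|^{-(n+1+N)}$ (cf. \cite{Jev2}), one arrives at
$$
|\nabla(\mathsf A_\alpha f)(z)|\ \lesssim\ \int_{\mathbb B_n}\frac{(1-|\xi|^2)^{N}\,|\nabla f(\xi)|}{|1-\langle z,\xi\rangle|^{\,n+N+1}}\,d\nu(\xi)\ =\ \mathsf T_{0,\,N+1}\big(|\nabla f|\big)(z).
$$
Now, by Schur's test (exactly as in the proof of Theorem \ref{l23}, with test weight $(1-|z|^2)^{\gamma}$, $-2<\gamma<-1$), $\mathsf T_{0,N+1}$ is bounded on $\mathcal L^2(\mathbb B_n,(1-|z|^2)\,d\nu)$; combined with the standard Littlewood--Paley description $\|g\|_{\mathcal H^2}^2\approx|g(0)|^2+\int_{\mathbb B_n}|\nabla g(z)|^2(1-|z|^2)\,d\nu(z)$ and the trivial bound $|\mathsf A_\alpha f(0)|=|\partial^\alpha f(0)|/\alpha!\lesssim\|f\|_{\mathcal H^2}$, this shows at once that $\mathsf A_\alpha$ is bounded on $\mathcal H^2$, which already settles the endpoint $s=\frac n2$ (where $\mathcal{HC}^{n/2}=\mathcal H^2$) and, since $\mathcal{HC}^{s}\subseteq\mathcal H^2$, guarantees $\mathsf A_\alpha f\in\mathcal H^2$ for every $f\in\mathcal{HC}^{s}$.

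For $s\in(-\frac12,\frac n2)$ I would then finish as follows. By Theorem \ref{t33} the assumption $f\in\mathcal{HC}^{s}$ is equivalent to $|\nabla f|^2(1-|\xi|^2)\,d\nu\in\mathcal{CM}_{1-2s/n}$, with $\mathcal{CM}_{1-2s/n}$-norm $\lesssim\|f\|_{\mathcal{HC}^{s}}$. Apply Theorem \ref{l23} with $\eta=1$, $a=0$, $b=N+1$ and $p=1-\frac{2s}{n}$ --- its hypotheses hold because $-\frac12<s<\frac n2$ forces $p,\eta\in(0,\frac{n+1}{n})$ and $a=0>\max\{-1,-(1+s)\}$, while $b>1$ --- to transport this into $|\mathsf T_{0,N+1}(|\nabla f|)|^2(1-|z|^2)^{1}\,d\nu\in\mathcal{CM}_{1-2s/n}$, whence, by the pointwise bound above, $|\nabla(\mathsf A_\alpha f)|^2(1-|z|^2)\,d\nu\in\mathcal{CM}_{1-2s/n}$ with norm $\lesssim\|f\|_{\mathcal{HC}^{s}}$; as $\mathsf A_\alpha f\in\mathcal H^2$, Theorem \ref{t33} read in the converse direction then gives $\mathsf A_\alpha f\in\mathcal{HC}^{s}$ and $\|\mathsf A_\alpha f\|_{\mathcal{HC}^{s}}\lesssim\|f\|_{\mathcal{HC}^{s}}$, so $\mathsf A_\alpha$ is a bounded linear operator on $\mathcal{HC}^{s}$. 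The genuinely delicate point is the pointwise estimate: the Bergman reproducing formula must be run through $\partial_j f$ rather than through $f$ itself, for only then does the exponent $a$ of the resulting kernel operator come out to be $0$; running it through $f$ would produce $a=1$, which violates the constraint needed to land the output weight $(1-|z|^2)^{2a+\eta}$ of Theorem \ref{l23} on exactly the power $1$ required by Theorem \ref{t33}(iii). The exponent bookkeeping and the ``fold-in-$t$'' estimate are the only real technical hurdles; everything else is assembly of results already in hand.
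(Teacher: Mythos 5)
Your proof is correct, and its analytic core coincides with the paper's: both reduce the boundedness of the Gleason operators to the pointwise bound
$|\nabla(\mathsf A_\alpha f)(z)|\lesssim\int_{\mathbb B_n}(1-|\xi|^2)^{N}\,|\nabla f(\xi)|\,|1-\langle z,\xi\rangle|^{-(n+N+1)}\,d\nu(\xi)$,
obtained by running the weighted Bergman reproducing formula through the first-order partials of $f$ (whose membership in the relevant weighted Bergman space is supplied by Lemma \ref{t52}) and folding the $t$-integral into the kernel, and both then invoke Theorem \ref{t33} together with Theorem \ref{l23} (with $\eta=1$, $a=0$, $p=1-\tfrac{2s}{n}$) to transport the Carleson condition from $|\nabla f|$ to $|\nabla(\mathsf A_\alpha f)|$. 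Where you genuinely diverge is the passage to $m\ge 2$: the paper only constructs the first-order operators $\mathsf A_kf(z)=\int_0^1\partial_kf(tz)\,dt$ and produces the higher-order ones as $m$-fold compositions by induction, whereas you write each $\mathsf A_\alpha$ in closed form via the Taylor formula with integral remainder, $\mathsf A_\alpha f(z)=\tfrac{m}{\alpha!}\int_0^1(1-t)^{m-1}(\partial^\alpha f)(tz)\,dt$, and estimate it in one shot. Your route buys explicit solution operators and sidesteps the need to verify that the intermediate functions in the induction again vanish to the right order at the origin (a point the paper glosses over); the cost is the extra bookkeeping of differentiating the reproducing identity $m$ times and the $(1-t)^{m-1}$-weighted fold-in estimate, both of which you handle correctly. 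Your preliminary step establishing $\mathsf A_\alpha f\in\mathcal H^2$ via Schur's test and the Littlewood--Paley description --- needed before Theorem \ref{t33} can be read in the converse direction, and also settling the endpoint $s=\tfrac n2$ --- is a detail the paper leaves implicit.
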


\begin{proof} When $s=\frac{n}{2}$, the result follows from \cite[p. 155]{Zhu1}. Thus, it is enough to handle $s\in(-\frac{1}{2},\frac{n}{2})$.
The argument is divided into two cases below.

{\it Case 1. $m=1$}. The decay of $|\nabla f|$ in Lemma \ref{t52} derives
$$
\int_{\mathbb B_n}|\nabla f(z)|(1-|z|^2)^{n}d\nu(z)\lesssim \|f\|_{\mathcal{HC}^s} \int_{\mathbb B_n}(1-|z|^2)^{n-1-s}d\nu(z)<\infty.
$$
This means 
$$
\frac{\partial f}{\partial{z_k}}\in\mathcal{L}_{a}^{1}\big(\mathbb B_n,(1-|z|^2)^nd\nu(z)\big)\quad\hbox{for}\quad k=1,2,...,n.
$$
By \cite[Theorem 2.2]{Zhu1}, we get 
$$
\frac{\partial f}{\partial{z_k}}(z)=c_{n}\int_{\mathbb B_n}\frac{\partial f}{\partial{\omega_k}}(\omega)\frac{(1-|\omega|^{2})^n}{(1-\langle z, \omega\rangle)^{2n+1}}\,d\nu(\omega)\ \ \hbox{where}\ \ 
c_n
=\frac{(2n+1)!}{(n!)^2}.
$$
For all $1\leq k\leq n$, define 
$$
\mathsf{A}_{k} f(z)=\int_{0}^{1}\frac{\partial f}{\partial{z_k}}(tz)\, dt.
$$
It is easy to see that
$$
\mathsf{A}_{k} f\in\mathcal{H}(\mathbb B_n)
$$ 
and $\mathsf{A}_{k}$ is a linear operator. Therefore, we need only to prove that $\mathsf{A}_{k}$ is bounded on $\mathcal{HC}^s$ for all $s\in(-\frac{1}{2},\frac{n}{2})$. To do so, note that
\begin{align*}
&f(z)-f(0)=\int_{0}^{1}\frac{d f}{dt}(tz)dt=\sum\limits_{k=1}^{n}\int_{0}^{1}z_k\frac{\partial f}{\partial{z_k}}(tz)dt=\sum\limits_{k=1}^{n}z_k \mathsf{A}_k f(z).
\end{align*}
So, we achieve
\begin{align*}
&\nabla\mathsf{A}_kf(z)=\nabla\int_{0}^{1}\frac{\partial f}{\partial{z_k}}(tz)\,dt\\
&=c_{n}(2n+1)\int_{0}^{1}\int_{\mathbb B_n}\Big(\frac{\partial f}{\partial{\omega_k}}(\omega)\Big)\left(\frac{\bar{\omega}(1-|\omega|^{2})^n}{(1-t\langle z, \omega\rangle)^{2n+2}}\right)\,d\nu(\omega)dt\\
&=c_{n}(2n+1)\int_{\mathbb B_n}\Big(\frac{\partial f}{\partial{\omega_k}}(\omega)\Big)\bar{\omega}(1-|\omega|^2)^n\,d\nu(\omega)\int_{0}^{1}\frac{dt}{(1-t\langle z, \omega\rangle)^{2n+2}}\\
&=c_{n}\int_{\mathbb B_n}\Big(\frac{\partial f}{\partial{\omega_k}}(\omega)\Big)\left(\frac{\bar{\omega}(1-|\omega|^2)^n}{(1-\langle z, \omega\rangle)^{2n+1}}\right)\left(\frac{1-(1-\langle z, \omega\rangle)^{2n+1}}{\langle z, \omega\rangle}\right)\,d\nu(\omega).
\end{align*}
Since 
$$
\frac{1-(1-\langle z, \omega\rangle)^{2n+1}}{\langle z, \omega\rangle}
$$
is a polynomial of $z$ and $\omega$, we have 
$$\sup\limits_{z,\,\omega\in\mathbb B_n}\left|\frac{1-(1-\langle z, \omega\rangle)^{2n+1}}{\langle z, \omega\rangle}\right|\lesssim 1.
$$
Hence 
$$
|\nabla\mathsf{A}_kf(z)|\lesssim \int_{\mathbb B_n}\frac{(1-|\omega|^2)^n|\nabla f(\omega)|}{|1-\langle z, \omega\rangle|^{2n+1}}d\nu(\omega).
$$
Note that $f\in\mathcal{HC}^s$. So, Theorem \ref{t33} is used to give that $|\nabla f(z)|^2(1-|z|^2)d\nu(z)$ belongs to $\mathcal{CM}_{1-\frac{2s}{n}}$.
This, along with Theorem \ref{l23} (by setting $a=0$ and $b=n+1$), shows that $
|\mathsf{A}_kf(z)|^2(1-|z|^2)d\nu(z)$ belongs to $\mathcal{CM}_{1-\frac{2s}{n}}$. 

 {\it Case 2. $m\geq 2$}. We proceed the argument by an induction on $m$.
For $m=1$, we have proved
$$
 f(z)=\sum\limits_{k=1}^{n}z_k \mathsf{A}_k f(z)
\quad\hbox{where}\quad
\mathsf{A}_k f\in\mathcal{HC}^s.
$$
Suppose the assertion is valid for $m-1$. Then
$$
f(z)=\sum_{|\alpha|=m-1}z^{\alpha}\mathsf{A}_{\alpha} f(z)=\sum\limits_{\alpha_1+...+\alpha_{n}=m-1}z_1^{\alpha_1}\cdots z_{n}^{\alpha_{n}}\mathsf{A}_{\alpha_1,...,\alpha_{n}} f(z),
$$
where 
$$
\mathsf{A}_{\alpha_1,...,\alpha_{n}} f\in\mathcal{HC}^s.
$$
We apply the case $m=1$ with $\mathsf{A}_{\alpha_1,...,\alpha_{n}} f$ replacing $f$ to obtain
$$
 \mathsf{A}_{\alpha_1,...,\alpha_{n}} f(z)= \sum\limits_{k=1}^{n}z_k \mathsf{A}_k \mathsf{A}_{\alpha_1,...,\alpha_{n}} f(z).
$$
Let $\mathsf{A}_{\gamma}=\mathsf{A}_{k}\mathsf{A}_{\alpha_1,...,\alpha_{n}}$. Then $\mathsf{A}_{\gamma}$ is bounded on  $\mathcal{HC}^s$. By the inductive hypothesis on $m-1$, we have 
 $$
  f(z)=\sum\limits_{k=1}^{n}\sum\limits_{\alpha_1+...+\alpha_{n}=m-1}z_kz_1^{\alpha_1}\cdots z_{n}^{\alpha_{n}} \mathsf{A}_{\gamma} f(z)=\sum_{|\beta|=m}z^{\gamma}\mathsf{A}_{\gamma} f(z).
 $$
 This proves the assertion for $m$.
\end{proof}

\section{Induced Carleson problem}\label{s6}
\setcounter{equation}{0}

\subsection{Solution to the induced Carleson problem}\label{s61}
Let $\mu$ be a nonnegative Borel measure on $\mathbb{B}_n$. Denote $\mathcal{T}_{s}^{\infty}(\mu)$ by the non-isotropic tent space of all $\mu$-measurable functions $f$ on $\mathbb{B}_n$ satisfying 
$$
\|f\|_{\mathcal{T}_{s}^{\infty}(\mu)}=\sup_{Q_r(\zeta)}\left( r^{2s-n}\int_{Q_{r}(\zeta)}|f|^2\,d\mu\right)^\frac12<\infty,
$$
where the supremum is taken over all $Q_{r}(\zeta)=\{z\in\mathbb{B}_n: \,|1-\langle z,\zeta\rangle|<r\}$.

 Referring to \cite{Xiao3} and its related references, we raise the induced Carleson problem for $\mathcal{HC}^s$ as follows:
 
 \medskip
 {\it  Suppose that $\mu$ is a nonnegative Borel measure on $\mathbb{B}_n$. What geometric nature does $\mu$ possess in order that $\mathcal{HC}^s$ embeds continuously into $\mathcal{T}_{s}^{\infty}(\mu)$\ ?}
 \medskip

Below is a solution of the preceding question.

\begin{theorem}
\label{t61}
Let $s\in(-\frac{1}{2},\frac{n}{2})$. Suppose that $\mu$ is a nonnegative Borel measure on $\mathbb{B}_n$. Then the identity operator
 $\mathsf{I}:\,\mathcal{HC}^s\mapsto\mathcal{T}_{s}^{\infty}(\mu)$ is continuous if and only if
$$
\begin{cases}
\|\mu\|_{\mathcal{CM}_{1-\frac{2s}{n}}}=\sup_{Q_r(\zeta)} \left({\frac{\mu(Q_{r}(\zeta))}{r^{n-2s}}}\right)^\frac12<\infty\ \ \hbox{for}\ \ s\in (-1/2,0);\\
 \|\mu\|_{\mathcal{LCM}_1}= \sup_{Q_r(\zeta)} \left({\frac{\mu(Q_{r}(\zeta))}{r^{n}(\log{\frac{2}{r}})^{-2}}}\right)^\frac12<\infty\ \ \hbox{for}\ \ s=0;\\
  \|\mu\|_{\mathcal{CM}_1}=\sup_{Q_r(\zeta)}\left({\frac{\mu(Q_{r}(\zeta))}{r^{n}}}\right)^\frac12<\infty\ \ \hbox{for}\ \ s\in(0,n/2),
  \end{cases}
  $$
where $``sup_{Q_r(\zeta)}"$ means the supremum ranging over all $Q_{r}(\zeta)=\{z\in\mathbb{B}_n: \,|1-\langle z,\zeta\rangle|<r\}$.

\end{theorem}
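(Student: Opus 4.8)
\emph{Strategy.} The statement is an ``iff'', so I would split it into necessity and sufficiency, and in each direction reduce everything to the quantity $\mu(Q_r(\zeta))$. It is convenient to note first that $f\in\mathcal T_s^\infty(\mu)$ is the same as $|f(z)|^2\,d\mu(z)\in\mathcal{CM}_{1-\frac{2s}{n}}$, with $\|f\|_{\mathcal T_s^\infty(\mu)}^2=\||f|^2d\mu\|_{\mathcal{CM}_{1-2s/n}}^2$; so continuity of $\mathsf I$ means exactly that $\||f|^2d\mu\|_{\mathcal{CM}_{1-2s/n}}\lesssim\|f\|_{\mathcal{HC}^s}$ for all $f$.

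\emph{Necessity.} Assume $\mathsf I$ is bounded, so $r^{2s-n}\int_{Q_r(\zeta)}|f|^2\,d\mu\lesssim\|f\|_{\mathcal{HC}^s}^2$ for every tube. For $s\in(-\frac12,0)$ just take $f\equiv1$, whose $\mathcal{HC}^s$-norm is $1$; this reads off $\mu(Q_r(\zeta))\lesssim r^{n-2s}$. For $s\in(0,\frac n2)$ feed in the canonical examples $f_a(z)=(1-|a|^2)^{n-s}(1-\langle z,a\rangle)^{-n}$ of Example \ref{t51}, for which $\sup_a\|f_a\|_{\mathcal{HC}^s}<\infty$; since $a=(1-r)\zeta$ forces $|1-\langle z,a\rangle|<2r$ and hence $|f_a(z)|\gtrsim r^{-s}$ on all of $Q_r(\zeta)$ with $r=1-|a|$, the embedding inequality on $Q_r(a/|a|)$ gives $\mu(Q_r(a/|a|))\lesssim r^n$, i.e. $\mu\in\mathcal{CM}_1$. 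For $s=0$ the constants and the $f_a$'s only yield $\mathcal{CM}_1$; the logarithmic gain comes from the test functions $g_\zeta(z)=-\log(1-\langle z,\zeta\rangle)$. One checks $\mathsf Rg_\zeta(z)=\langle z,\zeta\rangle(1-\langle z,\zeta\rangle)^{-1}$, so $|\mathsf Rg_\zeta(z)|^2(1-|z|^2)\,d\nu(z)\in\mathcal{CM}_1$ with norm independent of $\zeta$, whence $\sup_{\zeta\in\mathbb S_n}\|g_\zeta\|_{\mathcal{HC}^0}<\infty$ by Theorem \ref{t33}(iv) and rotation invariance; since $|g_\zeta(z)|\ge\log(1/|1-\langle z,\zeta\rangle|)>\log(1/r)$ throughout $Q_r(\zeta)$, the embedding forces $\mu(Q_r(\zeta))\lesssim r^n(\log\tfrac2r)^{-2}$, i.e. $\mu\in\mathcal{LCM}_1$.

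\emph{Sufficiency, $s\in(-\frac12,0)$.} Here $\mathcal{HC}^s=\mathcal A_{-s}\subset H^\infty$ with $\|f\|_{H^\infty}\lesssim\|f\|_{\mathcal{HC}^s}$, so $r^{2s-n}\int_{Q_r(\zeta)}|f|^2\,d\mu\le\|f\|_{H^\infty}^2\,r^{2s-n}\mu(Q_r(\zeta))\lesssim\|\mu\|_{\mathcal{CM}_{1-2s/n}}^2\,\|f\|_{\mathcal{HC}^s}^2$, which is the whole content in this range.

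\emph{Sufficiency, $s\in[0,\frac n2)$.} This is the substantive part, and I would model it on the proof of Theorem \ref{l23}. By Lemma \ref{t52} (and its proof) a function $f\in\mathcal{HC}^s$ satisfies $|f(z)|\lesssim\|f\|_{\mathcal{HC}^s}(1-|z|^2)^{-s}$ for $s>0$ and $|f(z)|\lesssim\|f\|_{\mathcal{HC}^0}\log\frac{2}{1-|z|^2}$ for $s=0$; hence $f$ lies in a weighted Bergman space and $f(z)=c_\gamma\int_{\mathbb B_n}(1-|\omega|^2)^\gamma f(\omega)(1-\langle z,\omega\rangle)^{-(n+1+\gamma)}\,d\nu(\omega)$ with $\gamma=s$ when $s>0$ and $\gamma>0$ arbitrarily small when $s=0$. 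Fixing a tube $Q_r(\zeta)$, split the $\omega$-integral over $Q_{2r}(\zeta)$ and $\mathbb B_n\setminus Q_{2r}(\zeta)$ and square, obtaining $\int_{Q_r(\zeta)}|f|^2\,d\mu\lesssim\mathsf{Int}_1+\mathsf{Int}_2$ as in Theorem \ref{l23}. The tail $\mathsf{Int}_2$ is handled by the annuli $A_j=\{\,2^jr\le|1-\langle\omega,\zeta\rangle|<2^{j+1}r\,\}$, on which the kernel is $\approx(2^jr)^{-(n+1+\gamma)}$; one estimates $\int_{A_j}|f(\omega)|(1-|\omega|^2)^\gamma\,d\nu(\omega)$ through the local weighted Morrey/Campanato bound
$$
\int_{Q_\rho(\zeta)}|f(\omega)|^2(1-|\omega|^2)^{2s-1}\,d\nu(\omega)\lesssim\rho^{\,n-2s}\|f\|_{\mathcal{HC}^s}^2\qquad(s>0),
$$
which follows from Theorem \ref{t33}(iv) together with a Hardy-type inequality and the telescoping bound $|f_{Q_\rho(\zeta)\cap\mathbb S_n}|\lesssim\rho^{-s}\|f\|_{\mathcal{HC}^s}$ (respectively $|f_{Q_\rho(\zeta)\cap\mathbb S_n}|\lesssim\|f\|_{\mathcal{HC}^0}\log\frac2\rho$ when $s=0$); the resulting series in $j$ converges thanks to the decay of the kernel and of $(\log)^{-2}$. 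The local term $\mathsf{Int}_1$ uses the same estimate on $Q_{2r}(\zeta)$ together with $\mu\in\mathcal{CM}_1$ (resp. $\mathcal{LCM}_1$): by Cauchy--Schwarz in $\omega$ and a Schur/Fubini argument — exactly the Schur-test step of Theorem \ref{l23} but with the target measure $\mu$ in place of $(1-|z|^2)^{2a+\eta}d\nu$ — one gets $\mathsf{Int}_1\lesssim r^{\,n}\|f\|_{\mathcal{HC}^s}^2$ when $s>0$ and $\mathsf{Int}_1\lesssim r^{\,n}(\log\tfrac2r)^2\|f\|_{\mathcal{HC}^0}^2\cdot\|\mu\|_{\mathcal{LCM}_1}^2$ when $s=0$; in the latter the factor $(\log\tfrac2r)^2$ is absorbed by the weight $(\log\tfrac2r)^{-2}$ built into $\mathcal{LCM}_1$. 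Dividing by $r^{\,n-2s}$ (which equals $r^{\,n}$ at $s=0$ and dominates $r^{\,n}$ for $s>0$) finishes both cases.

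\emph{Main obstacle.} The delicate point is $s=0$: the reproducing kernel sits at the critical exponent $n+1$, so $\mathsf{Int}_1$ unavoidably carries a $\log\frac2r$ (squared after Cauchy--Schwarz), and the entire theorem hinges on the definition of $\mathcal{LCM}_1$ supplying precisely the compensating $(\log\frac2r)^{-2}$. A related difficulty, present already for $s>0$, is that neither the pointwise growth of $f$ alone nor a naive slab-by-slab decomposition of $Q_r(\zeta)$ controls $\int_{Q_r(\zeta)}|f|^2\,d\mu$ — one must route the bound through the integral representation (equivalently, through the derivative characterization of Theorem \ref{t33}) so that the cancellation intrinsic to holomorphic $\mathcal{HC}^s$-functions is actually used; making the Schur-type estimate for $\mathsf{Int}_1$ with a general $\mu$ rigorous is where most of the work lies.
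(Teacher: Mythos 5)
Your necessity argument is fine and in fact slightly more complete than the paper's: the constant test function for $s<0$ and the canonical examples $f_a$ with $a=(1-r)\zeta$ for $s>0$ are exactly what the paper uses, and your logarithmic test functions $g_\zeta(z)=-\log(1-\langle z,\zeta\rangle)$ for $s=0$ supply the $\mathcal{LCM}_1$ lower bound that the paper dispatches by citing Peng--Ouyang. The sufficiency for $s\in(-\tfrac12,0)$ via $\mathcal{HC}^s=\mathcal A_{-s}\subset \mathcal H^\infty$ also matches the paper. The problem is the sufficiency for $s\in[0,\tfrac n2)$, which is the substantive half of the theorem. Your plan routes everything through the reproducing formula and then asserts two estimates: a local weighted bound $\int_{Q_\rho(\zeta)}|f|^2(1-|\omega|^2)^{2s-1}d\nu\lesssim\rho^{n-2s}\|f\|_{\mathcal{HC}^s}^2$, and a ``Schur-test step with the target measure $\mu$ in place of $(1-|z|^2)^{2a+\eta}d\nu$.'' The second of these is not a Schur test at all once the target measure is a general $\mu$: Schur's test certifies boundedness on $L^2(d\nu)$, whereas you need the operator $g\mapsto\int g(\omega)K(\cdot,\omega)\,d\nu(\omega)$ to be bounded from $L^2(d\nu)$ into $L^2(d\mu)$, which is itself a Carleson-embedding statement of essentially the same depth as the theorem you are proving. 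You acknowledge that ``this is where most of the work lies,'' but no argument is given, so the proof as written is circular at its core. The first asserted estimate is also nontrivial (a naive use of the pointwise growth $|f|\lesssim(1-|\cdot|^2)^{-s}$ produces a divergent integral of $(1-|\omega|^2)^{-1}$ over the tube) and would need its own telescoping/Hardy argument.

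The paper avoids all of this with a much shorter device. Fix $Q_r(\zeta)$ and $a=(1-r)\zeta$, and split $|f|^2\lesssim|f(a)|^2+|f(z)-f(a)|^2$. The constant piece is controlled by Lemma \ref{t52} ($|f(a)|\lesssim r^{-s}\|f\|_{\mathcal{HC}^s}$) together with $\mu(Q_r(\zeta))\lesssim r^n$. For the oscillation piece one inserts the factor $\frac{(1-|a|^2)^n}{|1-\langle z,a\rangle|^{2n}}\gtrsim r^{-n}$ on $Q_r(\zeta)$, extends the integral to all of $\mathbb B_n$, and applies H\"ormander's Carleson embedding
\begin{equation*}
\int_{\mathbb B_n}|g|^2\,d\mu\lesssim\|\mu\|^2_{\mathcal{CM}_1}\int_{\mathbb S_n}|g|^2\,d\sigma\qquad(g\in\mathcal H^2)
\end{equation*}
to $g=(f-f(a))\,(1-|a|^2)^{n/2}(1-\langle\cdot,a\rangle)^{-n}$, whose $\mathcal H^2$ norm is exactly $\|f\circ\varphi_a-f(a)\|_{\mathcal H^2}$; Theorem \ref{t31}(ii) then bounds $(1-|a|^2)^{2s}\|f\circ\varphi_a-f(a)\|^2_{\mathcal H^2}$ by $\|f\|^2_{\mathcal{HC}^s,*}$. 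This is the cancellation mechanism you were looking for, but packaged through the M\"obius characterization rather than the reproducing kernel, and it makes the $\mathsf{Int}_1/\mathsf{Int}_2$ machinery of Theorem \ref{l23} unnecessary here. If you want to keep your outline, you must actually prove the $L^2(d\nu)\to L^2(d\mu)$ bound for the Bergman-type kernel under $\mu\in\mathcal{CM}_1$ (and its logarithmic variant at $s=0$); otherwise I recommend switching to the H\"ormander route.
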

\begin{proof} Three cases are considered below.

{\it Case 1. $s\in (-\frac{1}{2},0)$}. Under this situation, one has $\mathcal{HC}^s=\mathcal{B}_{1+s}$. If $\mathsf{I}$ is bounded, then taking $f(z)=1$ we get $\|\mu\|_{\mathcal{CM}_{1-\frac{2s}{n}}}<\infty.$ Conversely, if $\|\mu\|_{\mathcal{CM}_{1-\frac{2s}{n}}}<\infty$, then 
$$
\|f\|_{\mathcal{T}^\infty_s(\mu)}\lesssim \|f\|_{\mathcal{HC}^s}\|\mu\|_{\mathcal{CM}_{1-\frac{2s}{n}}}.
$$

{\it Case 2. $s=0$}. Under this case, one has $\mathcal{HC}^s=\mathcal{BMOA}$. The desired assertion has been verified in \cite{PengOu2}.

{\it Case 3. $s\in (0,\frac{n}{2})$}. If $\mathsf{I}$ is continuous, then for any $f\in\mathcal{HC}^s$ one has $f\in\mathcal{T}^{\infty}_s(\mu).$ Now, for any $\omega\in\mathbb{B}_n$ let 
$$
f_{\omega}(z)=\frac{(1-|\omega|^2)^{n-s}}{(1-\langle z, \omega\rangle)^{n}}.
$$
Example \ref{t51} is used to imply that $f_{\omega}\in\mathcal{HC}^s$. Given a Carleson tube ${Q}_r(\zeta)$, let $\omega=(1-r)\zeta$. Then 
$$
r^{-n}{\mu(Q_{r}(\zeta))}\lesssim r^{2s-n}\int_{{Q}_r(\zeta)}|f_{\omega}(z)|^2
d\mu(z)\lesssim\|f_{\omega}\|^2_{\mathcal{HC}^s}\lesssim 1.
$$
Accordingly, $\|\mu\|_{\mathcal{CM}_1}<\infty.$

On the other hand, if $\|\mu\|_{\mathcal{CM}_1}<\infty.$ Then we need to prove that $\mathsf{I}$ is continuous. Let $f\in\mathcal{HC}^s$. For $\zeta\in\mathbb{S}_n$ and $0<r<1$, write 
$$
a=(1-r)\zeta\,\, \&\ \ Q_r({\zeta})=\{z\in\mathbb{B}_n:|1-\langle z,\zeta\rangle|<r\}.
$$
By the H\"older inequality, we obtain
$$ 
{r^{2s-n}}\int_{Q_r({\zeta})}|f(z)|^2d\mu(z)
\lesssim
{r^{2s-n}}\int_{Q_r({\zeta})}|f(a)|^2d\mu(z)+{r^{2s-n}}\int_{Q_r({\zeta})}|f(z)-f(a)|^2d\mu(z)\equiv J_1+J_2.
$$
For $J_1$, the decay of $f$ found in Lemma \ref{t52} gives 
$$
|f(a)|\lesssim{(1-|a|^2)^{-s}}{\|f\|_{\mathcal{HC}^s}}\approx r^{-s}{\|f\|_{\mathcal{HC}^s}}.
$$
Thus
$$
J_1\lesssim{r^{-n}\|f\|_{\mathcal{HC}^s}^2}\int_{Q_r({\zeta})}d\mu(z)\lesssim \|\mu\|^2_{\mathcal{CM}_1}\|f\|_{\mathcal{HC}^s}^2.
$$
For $J_2$, both $\|\mu\|_{\mathcal{CM}_1}<\infty$ and \cite{Hormander} are utilized to derive
$$
\int_{\mathbb{B}_n}|g(z)|^2d\mu(z)
\lesssim\|\mu\|^2_{\mathcal{CM}_1}\int_{\mathbb{S}_n}|g(\zeta)|^2 d\sigma(\zeta)\ \ \forall\ \  g\in\mathcal{H}^2.
$$
This last estimate, along with 
$$
|1-\langle a, z\rangle|=|(1-r)(1-\langle \zeta, z\rangle)+r|<2r\ \ \forall\ \ a=(1-r)\zeta\ \&\ z\in Q_r(\zeta),
$$
deduces
\begin{align*}
&J_2\lesssim{(1-|a|^2)^{2s}}\int_{Q_r({\zeta})}|f(z)-f(a)|^2\frac{(1-|a|^2)^n}{|1-\langle z, a\rangle|^{2n}}\,d\mu(z)\\
&\lesssim{(1-|a|^2)^{2s}}\int_{\mathbb{B}_n}|f(z)-f(a)|^2\frac{(1-|a|^2)^n}{|1-\langle z, a\rangle|^{2n}}\,d\mu(z)
\\
&\lesssim\|\mu\|^2_{\mathcal{CM}_1} (1-|a|^2)^{2s}\int_{\mathbb{S}_n}|f(\zeta)-f(a)|^2\frac{(1-|a|^2)^n}{|1-\langle \zeta, a\rangle|^{2n}}d\sigma(\zeta)
\\
&=\|\mu\|^2_{\mathcal{CM}_1}(1-|a|^2)^{2s}\int_{\mathbb{S}_n}|f\circ\varphi_{a}(\zeta)-f(a)|^2d\sigma(\zeta)
\\
&=\|\mu\|^2_{\mathcal{CM}_1}(1-|a|^2)^{2s}\|f\circ\varphi_{a}-f(a)\|_{\mathcal{H}^2}^2
\\
&\lesssim\|\mu\|^2_{\mathcal{CM}_1} \|f\|_{\mathcal{HC}^s}^2.
\end{align*}
From the estimates for $J_1$ and $J_2$, we know
$$
{r^{2s-n}}\int_{Q_r({\zeta})}|f(z)|^2d\mu(z)\lesssim \|\mu\|^2_{\mathcal{CM}_1}\|f\|_{\mathcal{HC}^s}^2<\infty,
$$
thereby finding
$$
\|f\|_{\mathcal{T}_{s}^{\infty}(\mu)}\lesssim\|\mu\|_{\mathcal{CM}_1}
\|f\|_{\mathcal{HC}^s},
$$
i.e.,  the identity operator $\mathsf{I}: \mathcal{HC}^s\mapsto\mathcal{T}^\infty_s(\mu)$ is continuous.
\end{proof}

\subsection{Riemann-Stieltjes operator}\label{s62} Denote by $\mathsf{T}_g$ and $\mathsf{S}_g$ the so-called Riemann-Stieltjes operator and the associate Riemann-Stieltjes operator with a given symbol $g\in\mathcal{H}(\mathbb B_n)$, respectively: 
 $$
 \mathsf{T}_g f(z)=\int_{0}^{1}f(tz)\mathsf{R}g(tz)\frac{dt}{t}\ \ \&\ \
\mathsf {S}_g f(z)=\int_{0}^{1}g(tz)\mathsf{R}f(tz)\frac{dt}{t}
\quad\forall\quad f\in\mathcal{H}(\mathbb B_n)\ \ \&\ \ z\in\mathbb B_n,
$$
 where 
 $$
 \mathsf{R}f(z)=\sum\limits_{k=1}^n z_k\frac{\partial f}{\partial z_k}(z)
 $$ 
 is still the radial derivative of $f$. It is easy to examine the following formulas:
 $$
 \begin{cases}
 \mathsf{T}_g(f)=\mathsf{S}_f(g);\\
 \mathsf{M}_gf(z)=f(z)g(z)=f(0)g(0)+\mathsf {T}_g f(z)+\mathsf {S}_g f(z);\\
 \mathsf{R}\circ\mathsf {T}_g f(z)=f(z)\mathsf{R}g(z).
 \end{cases}
 $$
 
 The Riemann-Stieltjes operators on different holomorphic function spaces have been investigated in many papers including \cite{AlemanC, AlemanS, AhernS, GirelaP, Hu1, Hu3, PengOu2, Pommerenke, Siskakis, SiskakisZ, Xiao4, Xiao07, Xiao3}. Now, as an interesting by-product of Theorem \ref{t61}, we can discover the continuity of a Riemann-Stieltjes operator acting on $\mathcal{HC}^s$. 
  
 \begin{theorem}
\label{t62} Let $s\in(-\frac{1}{2},\frac{n}{2})$, $g\in\mathcal{H}(\mathbb{B}_n)$ and $d\mu_g(z)=(1-|z|^2)|\mathsf{R}f(z)|^2d\nu(z)$. Then:

 {\rm(i)} $\mathsf{T}_g$ is continuous on  $\mathcal{HC}^s$ if and only if 
$$
\begin{cases}
\|\mu_{g}\|_{\mathcal{CM}_{1-\frac{2s}{n}}}<\infty\quad\hbox{as}\quad s\in (-\frac{1}{2},0);\\
\|\mu_{g}\|_{\mathcal{LCM}_1}<\infty\quad\hbox{as}\quad s=0;\\
\|\mu_{g}\|_{\mathcal{CM}_1}<\infty\quad\hbox{as}\quad s\in (0,\frac{n}{2}).
\end{cases}
$$

 {\rm(ii)} $\mathsf{S}_g$ is continuous on $\mathcal{HC}^s$ if and only if $\|g\|_{\mathcal{H}^{\infty}}=\sup_{z\in\mathbb B_n}|f(z)|<\infty$.
 \end{theorem}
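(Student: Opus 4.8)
The plan is to reduce both parts to three results already in hand: the radial-derivative characterization of $\mathcal{HC}^s$ in Theorem~\ref{t33}, the embedding criterion of Theorem~\ref{t61}, and the sharp growth bound of Lemma~\ref{t52}. The mechanism is the pair of identities $\mathsf{R}\circ\mathsf{T}_g f=f\,\mathsf{R}g$ (recorded just before the theorem) and $\mathsf{R}\circ\mathsf{S}_g f=g\,\mathsf{R}f$, the latter obtained by the same substitution $u=tz$ along radii. I shall use the quantitative form of Theorem~\ref{t33}, namely $\|h\|_{\mathcal{HC}^s}\approx|h(0)|+\bigl\|\,|\mathsf{R}h(z)|^2(1-|z|^2)\,d\nu(z)\,\bigr\|^{1/2}_{\mathcal{CM}_{1-\frac{2s}{n}}}$ for $h\in\mathcal{H}^2$, together with the standard Littlewood--Paley equivalence $\|h\|^2_{\mathcal{H}^2}\approx|h(0)|^2+\int_{\mathbb{B}_n}|\mathsf{R}h(z)|^2(1-|z|^2)\,d\nu(z)$; since every $p$-Carleson measure is finite (the remark after Definition~\ref{l20}), the second equivalence lets me promote ``$|\mathsf{R}h|^2(1-|z|^2)\,d\nu\in\mathcal{CM}_p$'' to ``$h\in\mathcal{H}^2$'', which is the hypothesis under which Theorem~\ref{t33} is applicable.

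For part~(i), fix $s\in(-\frac12,\frac n2)$ and note that $(\mathsf{T}_g f)(0)=0$ together with $\mathsf{R}(\mathsf{T}_g f)=f\,\mathsf{R}g$ gives the identity of measures $|\mathsf{R}(\mathsf{T}_g f)(z)|^2(1-|z|^2)\,d\nu(z)=|f(z)|^2\,d\mu_g(z)$, and that, straight from the definitions, $\bigl\|\,|f|^2\,d\mu_g\,\bigr\|_{\mathcal{CM}_{1-\frac{2s}{n}}}=\|f\|_{\mathcal{T}_s^\infty(\mu_g)}$. Hence $\mathsf{T}_g$ is bounded on $\mathcal{HC}^s$ if and only if $\sup_{\|f\|_{\mathcal{HC}^s}\le1}\|f\|_{\mathcal{T}_s^\infty(\mu_g)}<\infty$, i.e. if and only if $\mathsf{I}:\mathcal{HC}^s\to\mathcal{T}_s^\infty(\mu_g)$ is continuous, which by Theorem~\ref{t61} is exactly the trichotomy of conditions on $\mu_g$ in the statement. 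The one non-formal point is that for $0<s<\frac n2$ the sharp requirement is $\mu_g\in\mathcal{CM}_1$: testing $\mathsf{T}_g$ only on the constant function $1$ would give merely $\mu_g\in\mathcal{CM}_{1-\frac{2s}{n}}$, so the argument must go through Theorem~\ref{t61}, whose proof tests against the canonical family $f_\omega$ of Example~\ref{t51}.

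For part~(ii), sufficiency is direct: if $g\in\mathcal{H}^\infty$ then, using $\mathsf{R}(\mathsf{S}_g f)=g\,\mathsf{R}f$, $(\mathsf{S}_g f)(0)=0$ and $|g|\le\|g\|_{\mathcal{H}^\infty}$, the measure $|\mathsf{R}(\mathsf{S}_g f)(z)|^2(1-|z|^2)\,d\nu(z)$ is dominated by $\|g\|_{\mathcal{H}^\infty}^2\,|\mathsf{R}f(z)|^2(1-|z|^2)\,d\nu(z)$, which belongs to $\mathcal{CM}_{1-\frac{2s}{n}}$ by Theorem~\ref{t33}; being finite, it forces $\mathsf{S}_g f\in\mathcal{H}^2$, whence Theorem~\ref{t33} gives $\mathsf{S}_g f\in\mathcal{HC}^s$ with $\|\mathsf{S}_g f\|_{\mathcal{HC}^s}\lesssim\|g\|_{\mathcal{H}^\infty}\|f\|_{\mathcal{HC}^s}$. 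For necessity, assume $\mathsf{S}_g$ is continuous and take $a\in\mathbb{B}_n$ with $|a|\ge\frac12$. Apply $\mathsf{S}_g$ to $f_a(z)=(1-|a|^2)^{n-s}(1-\langle z,a\rangle)^{-n}$ from Example~\ref{t51}, which satisfies $\sup_a\|f_a\|_{\mathcal{HC}^s}<\infty$ and $\mathsf{R}f_a(a)=n|a|^2(1-|a|^2)^{-1-s}$. Then $\|\mathsf{S}_g f_a\|_{\mathcal{HC}^s}\lesssim\|\mathsf{S}_g\|$, so Lemma~\ref{t52} gives $|\nabla(\mathsf{S}_g f_a)(a)|\lesssim\|\mathsf{S}_g\|(1-|a|^2)^{-1-s}$, and since $|g(a)|\,|\mathsf{R}f_a(a)|=|\mathsf{R}(\mathsf{S}_g f_a)(a)|\le|a|\,|\nabla(\mathsf{S}_g f_a)(a)|$ this yields $|g(a)|\lesssim\|\mathsf{S}_g\|/(n|a|^2)\le(4/n)\|\mathsf{S}_g\|$. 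As $g$ is holomorphic, hence bounded on the compact set $\{|z|\le\frac12\}$, we conclude $g\in\mathcal{H}^\infty$.

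I do not expect a single hard step; the work is two pieces of bookkeeping. First, one must make the norm equivalence in Theorem~\ref{t33} quantitative (this follows from the closed graph theorem, or directly by inspecting the chain of estimates in its proof). Second, one must verify $\mathsf{T}_g f,\ \mathsf{S}_g f\in\mathcal{H}^2$ before Theorem~\ref{t33} applies, which is exactly where finiteness of $p$-Carleson measures and the Littlewood--Paley identity are used. The one genuinely structural subtlety, already flagged, is that for $0<s<\frac n2$ the correct gauge for $\mu_g$ is $\mathcal{CM}_1$ rather than $\mathcal{CM}_{1-\frac{2s}{n}}$, so Theorem~\ref{t61} (equivalently, the canonical family $f_\omega$ of Example~\ref{t51}) is indispensable.
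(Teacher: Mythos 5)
Your proposal is correct, and it diverges from the paper in one substantive place. For part (i) the paper handles $s\in(-\frac12,0]$ by citing the Bloch/$\mathcal{BMOA}$ literature and only invokes $\mathsf{R}\circ\mathsf{T}_gf=f\,\mathsf{R}g$ together with Theorem \ref{t61} for $s\in(0,\frac n2)$; you run that same reduction uniformly over the whole range, which is cleaner and more self-contained, at the cost of the bookkeeping you correctly flag (making Theorem \ref{t33} quantitative and checking $\mathsf{T}_gf\in\mathcal{H}^2$ via finiteness of Carleson measures plus the Littlewood--Paley identity — both routine, and the paper uses the quantitative form of Theorem \ref{t33} tacitly anyway). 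Your sufficiency argument in part (ii) coincides with the paper's. The genuine difference is the necessity in part (ii): the paper fixes $\omega$ with $\frac23<|\omega|<1$, places the Bergman-type ball $E(\omega,\frac12)$ inside a Carleson tube $Q_r(\zeta)$ with $1-|\omega|^2\approx r$, and uses subharmonicity of $|g|^2$ to dominate $|g(\omega)|^2$ by $r^{2s-n}\int_{Q_r(\zeta)}|g|^2|\mathsf{R}f_\omega|^2(1-|z|^2)\,d\nu\lesssim\|\mathsf{S}_gf_\omega\|^2_{\mathcal{HC}^s}$; you instead evaluate pointwise at $z=a$, using $\mathsf{R}f_a(a)=n|a|^2(1-|a|^2)^{-1-s}$, the growth bound of Lemma \ref{t52}, and $|\mathsf{R}h(a)|\le|a|\,|\nabla h(a)|$ to get $|g(a)|\lesssim\|\mathsf{S}_g\|$ directly. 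Your route is more elementary (no subharmonic averaging, no geometry of $E(\omega,\frac12)$ versus Carleson tubes) and exploits the sharpness statement in Lemma \ref{t52}; the paper's route stays entirely inside the Carleson-measure framework and would survive in settings where a pointwise gradient bound is unavailable. Two trivial slips that do not affect the conclusion: the constant should be $\|\mathsf{S}_g\|/(n|a|)$ rather than $\|\mathsf{S}_g\|/(n|a|^2)$ after cancelling one factor of $|a|$, and you should say explicitly that $|a|\ge\frac12$ keeps $\mathsf{R}f_a(a)$ bounded below by a multiple of $(1-|a|^2)^{-1-s}$, which is exactly why you restrict to that range.
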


\begin{proof} (i) The equivalence under $s\in (-\frac{1}{2},0]$ follows from the structure table of $\mathcal{HC}^s$ and \cite{OrtegaF, Xiao4}. And, the equivalence under $s\in (0, \frac{n}{2})$ follows from using both
$\mathsf{R}\circ\mathsf {T}_g f=f\mathsf{R}g$ and Theorem \ref{t61}.

(ii) If $\|g\|_{\mathcal{H}^{\infty}}<\infty$, then for any Carleson tube $Q_r(\zeta)\subseteq\mathbb S_n$ we have
\begin{align*}
&r^{2s-n}\int_{Q_r(\zeta)}|\mathsf{R}\circ\mathsf{S}_{g}f(z)|^2(1-|z|^2)d\nu(z)\\&=r^{2s-n}\int_{Q_r(\zeta)}|\mathsf{R}\circ\mathsf{T}_{f}g(z)|^2(1-|z|^2)d\nu(z)\\&=r^{2s-n}\int_{Q_r(\zeta)}|g(z)|^2|\mathsf{R} f(z)|^2(1-|z|^2)d\nu(z)\\&
\lesssim \|g\|_{\mathcal{H}^{\infty}}^2\|f\|_{\mathcal{HC}^s}^2,
\end{align*}
whence reaching
$$
\|\mathsf{S}_gf\|_{\mathcal{HC}^s}\lesssim \|g\|_{\mathcal{H}^{\infty}}\|f\|_{\mathcal{HC}^s},
$$
which in turn implies that $\mathsf{S}_g$ is continuous on $\mathcal{HC}^s$.

Conversely, suppose $\mathsf{S}_g$ is continuous on $\mathcal{HC}^s$. Then its operator norm $\|\mathsf{S}_g\|$ is finite. Given any $\omega$ with $\frac{2}{3}<|\omega|<1$. Then, for any $\zeta\in\mathbb{S}_n$ there exists $0<r<1$ such that the corresponding Carleson tube $Q_r(\zeta)$ enjoys
$$ 
E(\omega,{1}/{2})=\Big\{z\in\mathbb{B}_n:\ |\varphi_{\omega}(z)|<{1}/{2}\Big\}\subset Q_r(\zeta)\ \ \&\ \  1-|\omega|^2\approx r.
$$ 
When $z\in E(\omega,{1}/{2}) $, some calculations show 
$$
\begin{cases}
\nu(E(\omega,{1}/{2}))\approx (1-|\omega|^2)^{n+1};\\
|\langle z, \omega\rangle|\gtrsim 1;\\
1-|\omega|^2\approx 1-|z|^2\approx |1-\langle z,\omega\rangle|\approx r.
\end{cases}
 $$
Choosing the test function 
$$
f_{\omega}(z)=\frac{(1-|\omega|^2)^{n-s}}{(1-\langle z, \omega\rangle)^{n}},
$$ 
and using Example \ref{t51}, we gain
$$
\sup\limits_ {\omega\in\mathbb B_n} \|f_{\omega}\|_{\mathcal{HC}^s}<\infty.
$$ 
Since $|g|^2$ is subharmonic in $\mathbb B_n$, one has 
\begin{align*}
&|g(\omega)|^2\lesssim{(1-|\omega|^2)^{-n-1}} \int\limits_{E(\omega,{1}/{2})}|g(z)|^2d\nu(z)\\
&\lesssim{r^{2s-n}} \int\limits_{E(\omega,{1}/{2})}\frac{|g(z)|^2(1-|z|^2)^{2n-2s+1}}{|1-\langle z,\omega\rangle|^{2(n+1)}}d\nu(z)\\
&\lesssim{r^{2s-n}} \int\limits_{E(\omega,{1}/{2})}\frac{|g(z)|^2|\langle z,\omega\rangle|^2(1-|z|^2)^{2n-2s+1}}{|1-\langle z,\omega\rangle|^{2(n+1)}}d\nu(z)\\
&\lesssim{r^{2s-n}}\int\limits_{Q_r(\zeta)}|g(z)|^2|\mathsf{R}f_{\omega}(z)|^2(1-|z|^2)d\nu(z)\\&\lesssim \|\mathsf{S}_g(f_{\omega})\|_{\mathcal{HC}^s}^2\lesssim\|\mathsf{S}_g\|^2.
\end{align*}
This, together with the maximum modulus principle, ensures $\|g\|_{\mathcal{H}^{\infty}}<\infty$.
\end{proof}

In light of Theorem \ref{t62}, the pointwise multipliers of $\mathcal{HC}^s$ can be readily determined as a unification of the well-known results in \cite{CascanteFO, OrtegaF, ZhuR}

\begin{corollary}
\label{t63}
Let $s\in(-\frac{1}{2},\frac{n}{2})$, $g\in \mathcal{H}(\mathbb{B}_n)$ and $d\mu(z)_{g}=(1-|z|^2)|\mathsf{R}f(z)|^2d\nu(z)$. Then  $\mathsf{M}_g$ is continuous on $\mathcal{HC}^s$ if and only if 
$$
\begin{cases}
g\in\mathcal{HC}^s\quad\hbox{as}\quad s\in (-\frac{1}{2},0);\\
\|g\|_{\mathcal{H}^{\infty}}+\|\mu_{g}\|_{\mathcal{LCM}_1}<\infty\quad\hbox{as}\quad s=0;\\
\|g\|_{\mathcal{H}^{\infty}}<\infty\quad\hbox{as}\quad s\in (0,\frac{n}{2}).
\end{cases}
$$
\end{corollary}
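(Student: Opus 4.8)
The plan is to build everything on the algebraic identity $\mathsf{M}_gf=f(0)g(0)+\mathsf{T}_gf+\mathsf{S}_gf$ recorded just before Theorem \ref{t62}. Since $\mathcal{HC}^s\subseteq\mathcal{H}^2$, the functional $f\mapsto f(0)$ is continuous on $\mathcal{HC}^s$ and $1\in\mathcal{HC}^s$, so $f\mapsto f(0)g(0)$ is a bounded (rank-one) operator on $\mathcal{HC}^s$; hence $\mathsf{M}_g$ is continuous on $\mathcal{HC}^s$ as soon as both $\mathsf{T}_g$ and $\mathsf{S}_g$ are. This already gives the sufficiency halves: for $s\in(-\tfrac12,0)$, $g\in\mathcal{HC}^s=\mathcal{A}_{-s}$ entails $\|g\|_{\mathcal{H}^\infty}<\infty$ (analytic Lipschitz functions are bounded) and, by Theorem \ref{t33}, $\mu_g\in\mathcal{CM}_{1-\frac{2s}{n}}$, so $\mathsf{S}_g$ and $\mathsf{T}_g$ are continuous by Theorem \ref{t62}; for $s=0$, $\|g\|_{\mathcal{H}^\infty}<\infty$ and $\|\mu_g\|_{\mathcal{LCM}_1}<\infty$ directly yield the continuity of $\mathsf{S}_g$ and $\mathsf{T}_g$; for $s\in(0,\tfrac n2)$, $g\in\mathcal{H}^\infty\subseteq\mathcal{BMOA}$ together with Theorem \ref{t33} (taken at $s=0$) gives $\mu_g\in\mathcal{CM}_1$, whence $\mathsf{T}_g$ is continuous on $\mathcal{HC}^s$ by Theorem \ref{t62}(i) and $\mathsf{S}_g$ by Theorem \ref{t62}(ii). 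In each range $\mathsf{M}_g$ is therefore continuous.

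For the necessity, assume $\mathsf{M}_g$ is bounded on $\mathcal{HC}^s$; then in particular $g=\mathsf{M}_g(1)\in\mathcal{HC}^s$, which already closes the case $s\in(-\tfrac12,0)$. For $s\in[0,\tfrac n2)$ the crucial step is to show $\|g\|_{\mathcal{H}^\infty}<\infty$, and I would obtain this by an iteration argument. Since $\mathsf{M}_{g^k}=\mathsf{M}_g^{\,k}$, its operator norm is at most $\|\mathsf{M}_g\|^{k}$. Apply it to the canonical test function $f_w(z)=(1-|w|^2)^{n-s}(1-\langle z,w\rangle)^{-n}$ of Example \ref{t51}, which enjoys $\sup_{a\in\mathbb B_n}\|f_a\|_{\mathcal{HC}^s}<\infty$ and $f_w(w)=(1-|w|^2)^{-s}$, and evaluate at $w$; using that point evaluation at $w$ is bounded on $\mathcal{HC}^s\subseteq\mathcal{H}^2$, say $|h(w)|\le C(w)\|h\|_{\mathcal{HC}^s}$, one gets
$$
|g(w)|^{k}(1-|w|^2)^{-s}=\big|\mathsf{M}_{g^k}f_w(w)\big|\le C(w)\,\|\mathsf{M}_g\|^{k}\sup_{a\in\mathbb B_n}\|f_a\|_{\mathcal{HC}^s}.
$$
Taking $k$-th roots and letting $k\to\infty$ annihilates the $w$-dependent constant and leaves $|g(w)|\le\|\mathsf{M}_g\|$; as $w\in\mathbb B_n$ is arbitrary, $\|g\|_{\mathcal{H}^\infty}\le\|\mathsf{M}_g\|<\infty$. (For $s=0$ one may alternatively invoke \cite{OrtegaF}.)

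With $\|g\|_{\mathcal{H}^\infty}<\infty$ in hand, Theorem \ref{t62}(ii) makes $\mathsf{S}_g$ continuous on $\mathcal{HC}^s$, so $\mathsf{T}_gf=\mathsf{M}_gf-\mathsf{S}_gf-f(0)g(0)$ is continuous as well; Theorem \ref{t62}(i) then returns $\|\mu_g\|_{\mathcal{LCM}_1}<\infty$ for $s=0$ — giving, together with $\|g\|_{\mathcal{H}^\infty}<\infty$, exactly the stated condition — while for $s\in(0,\tfrac n2)$ no further information is needed. This settles all three ranges. The one genuinely non-formal ingredient is the implication $\mathsf{M}_g$ bounded $\Rightarrow g\in\mathcal{H}^\infty$ for $s\ge0$: it hinges on the fact that point evaluations on $\mathcal{HC}^s$ grow only sub-exponentially in $(1-|w|^2)^{-1}$ (polynomially for $s>0$, logarithmically for $s=0$), which is precisely what forces the $k\to\infty$ limit to collapse — so I expect this to be the main obstacle and the step requiring the most care.
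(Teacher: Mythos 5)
Your proposal is correct and follows exactly the route the paper intends: the paper offers no written proof beyond the remark that the corollary is ``readily determined'' from Theorem \ref{t62} via the identity $\mathsf{M}_gf=f(0)g(0)+\mathsf{T}_gf+\mathsf{S}_gf$, and your argument is a faithful and complete realization of that deduction, with the standard iteration $\|g\|_{\mathcal{H}^\infty}\le\|\mathsf{M}_g\|$ (the one underlying the cited results of Ortega--F\`abrega and Zhu) supplying the only non-formal step. One small simplification: since the $k$-th root of the fixed constant $C(w)$ tends to $1$ regardless of its size, no growth estimate on point evaluations is actually needed, so the step you flag as the main obstacle is in fact automatic.
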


\end{document}